\documentclass[a4paper,11pt]{amsart}

\usepackage{a4wide}
\usepackage{amssymb,amsmath,amsfonts} \allowdisplaybreaks
\usepackage{graphicx}
\usepackage{hyperref}

\usepackage{color}
\usepackage{amsthm}
\theoremstyle{plain}
\newtheorem{Theorem}{Theorem}[section]
\newtheorem*{Theorem*}{Theorem}

\newtheorem{Corollary}[Theorem]{Corollary}
\newtheorem{Lemma}[Theorem]{Lemma}
\theoremstyle{definition}
\newtheorem{Definition}[Theorem]{Definition}

\theoremstyle{remark}
\newtheorem{Remark}[Theorem]{Remark}

\renewcommand{\epsilon}{\varepsilon}
\renewcommand{\phi}{\varphi}
\newcommand{\ato}[2]{\genfrac{}{}{0pt}{2}{#1}{#2}}

\DeclareMathOperator{\Tr}{Tr}

\DeclareMathOperator{\diam}{diam}
\DeclareMathOperator{\supp}{supp}
\DeclareMathOperator{\rank}{rank}
\DeclareMathOperator{\id}{id}

\newcommand{\eul}{\mathrm{e}}

\newcommand{\EE}{\mathbb{E}}
\newcommand{\RR}{\mathbb{R}}
\newcommand{\CC}{\mathbb{C}}
\newcommand{\NN}{\mathbb{N}}
\newcommand{\ZZ}{\mathbb{Z}}
\newcommand{\PP}{\mathbb{P}}

\newcommand{\cF}{\mathcal{F}}

\newcommand{\cB}{\mathcal{B}}

\newcommand{\cS}{\mathcal{S}}
\newcommand{\cA}{\mathcal{A}}
\newcommand{\st}{\ \vert \ }

\newcommand{\hm}[1]{\textbf{*}\leavevmode{\marginpar{\tiny%
$\hbox to 0mm{\hspace*{-0.5mm}$\leftarrow$\hss}%
\vcenter{\vrule depth 0.1mm height 0.1mm width \the\marginparwidth}%
\hbox to 0mm{\hss$\rightarrow$\hspace*{-0.5mm}}$\\\relax\raggedright #1}}}
%
%

\title[Uniform approximation of the IDS for long-range percolation Hamiltonians]{Uniform approximation of the integrated density of states for long-range percolation Hamiltonians}
 \author[F.~Schwarzenberger]{Fabian Schwarzenberger}
 \address[F.S.]{Fakult\"at f\"ur Mathematik, TU Chemnitz, 09107 Chemnitz, Germany}
 \urladdr{http://www-user.tu-chemnitz.de/$\sim$fabis}

\begin{document}
\begin{abstract}
In this paper we study the spectrum of long-range percolation graphs. The underlying geometry is given in terms of a finitely generated amenable group. We prove that the integrated density of states (IDS) or spectral distribution function can be approximated uniformly in the energy variable. Using this, we are able to characterise the set of discontinuities of the IDS.
\end{abstract}
\maketitle

\section{Introduction}

In this paper we study spectral properties of random graphs given by long-range percolation models. The underlying geometry is induced by a finitely generated amenable group. We measure the distance of the elements in the group in terms of the word metric with respect to some finite and symmetric set of generators. While the vertex set of the graph under consideration consists of all elements of the group and hence is deterministic, the edges are inserted randomly and mutually independently. The probability of the existence of an edge depends on the distance between the incident vertices measured in terms of the word metric. Though our model allows edges of arbitrary length, long edges are very unlikely. 
Notice that for long-range percolation graphs the Laplace operator $\Delta_\omega$ is almost surely unbounded and not of finite range. Here we say that an operator is of finite range if there is constant $R$ such that the matrix elements of the operator equal zero if their distance to the diagonal is larger than $R$.

We are interested in the spectrum of $\Delta_\omega$ or more precisely in properties of the corresponding \emph{integrated density of states} (IDS), also known as the \emph{spectral distribution function}. More precisely we ask whether this function can be approximated via finite volume analogues. 
Let us describe in more detail the problem under consideration. It is well known, that amenability is equivalent to the existence of a F\o lner sequence $(Q_j)$, cf. \cite{Adachi-93}. Restricting the Laplacian $\Delta_\omega$ to an element $Q_j$ gives a finite dimensional matrix, denoted by $\Delta_\omega[Q_j]$. The distribution of the eigenvalues of $\Delta_\omega[Q_j]$ is encoded in the function $n(\Delta_\omega[Q_j]):\RR\to \RR$ which maps each $E\in\RR$ to the number of eigenvalues of $\Delta_\omega[Q_j]$ not exceeding $E$. This is called an \emph{eigenvalue counting function}. Given this construction it is natural to ask wheather (and with respect to which topology) the eigenvalue counting functions converge when $j$ tends to infinity.

Before further elaborating on this question, let us briefly describe the content of the paper. The next section is devoted to give precise definitions of the geometric and probabilistic setting. In fact we present the details of the mentioned long-range percolation model and introduce the class of groups which our theory applies to. An important property that is required is the existence of a F\o lner sequence $(Q_j)$ such that each $Q_j$ is a monotile of the group. 
In Section 3 we prove a result from the theory of large deviations, namely a Bernstein inequality for random variables. This is used to estimate the number of ``long'' edges (i.e. edges of length longer than some constant $R$) which are incident to a certain set of vertices. We are interested in the convergence of functions which describe the spectra of operators restricted to elements of a F\o lner sequence $(Q_j)$. Therefore it is usefull to prove that boundary effects caused by this restriction either vanish for increasing $j$ or appear only with small probability. This is done Section 4 where we use the mentioned estimate on the number of ``long'' edges to verify a weak form of additivity for the eigenvalue counting functions. This will be one of the key tools in the proof of our main result, namely Theorem \ref{theorem_erg}. A special version of this reads as follows
\begin{Theorem*}
  Let $(Q_n)$ be a strictly increasing and tempered F\o lner sequence of monotiles and $F_\omega(Q)=n(\Delta_\omega[Q])$ the eigenvalue counting function. Then there exists a distribution function $N:\RR\to[0,1]$ such that
\[ \left\Vert N - \frac{F_\omega(Q_n)}{\vert Q_n \vert} \right\Vert_\infty \rightarrow 0, \quad n\to \infty \]
for allmost all $\omega \in \Omega$. The function $N$ is called \emph{integrated density of states}.
\end{Theorem*}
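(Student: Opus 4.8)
The plan is to replace the unbounded, infinite-range operator $\Delta_\omega$ by a bounded, finite-range truncation, run the available ergodic machinery there, and then remove the truncation uniformly in the energy. For $R>0$ let $\Delta_\omega^R$ arise from $\Delta_\omega$ by deleting every edge of word length exceeding $R$, and put $F_\omega^R(Q)=n(\Delta_\omega^R[Q])$. Since balls in a finitely generated group are finite, $\Delta_\omega^R$ has uniformly bounded vertex degrees, hence is bounded, so $F_\omega^R(Q)/|Q|$ is a genuine distribution function. The finite matrices $\Delta_\omega[Q]$ and $\Delta_\omega^R[Q]$ differ by a matrix of rank at most $2L_R(Q)$, where $L_R(Q)$ counts the edges of length $>R$ incident to $Q$; so by eigenvalue interlacing
\[
 \Bigl\Vert \tfrac{F_\omega(Q)}{|Q|}-\tfrac{F_\omega^R(Q)}{|Q|}\Bigr\Vert_\infty \ \le\ \frac{2\,L_R(Q)}{|Q|}.
\]
Now I would invoke the Bernstein-type large deviation estimate of Section~3 to bound $\PP\bigl(L_R(Q_n)>\epsilon|Q_n|\bigr)$ by a quantity decaying exponentially in $|Q_n|$ once $R=R(\epsilon)$ is large; as $(Q_n)$ is strictly increasing we have $|Q_n|\to\infty$, so Borel--Cantelli yields, for each $\epsilon>0$, an $R(\epsilon)$ with $\Vert F_\omega(Q_n)/|Q_n|-F_\omega^{R(\epsilon)}(Q_n)/|Q_n|\Vert_\infty\le\epsilon$ for all large $n$, almost surely. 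Running this along a sequence $\epsilon_k\downarrow0$ and intersecting the countably many full-measure sets fixes one $\omega$-set of full measure carrying all these estimates.

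\textbf{Step 2 (the finite-range model).} Fix $R$. Viewed as a map from the finite subsets of the group into the Banach space of bounded right-continuous functions on $\RR$ with the supremum norm, $Q\mapsto F_\omega^R(Q)$ is almost additive: the weak additivity of Section~4 bounds $\Vert F_\omega^R(Q\cup Q')-F_\omega^R(Q)-F_\omega^R(Q')\Vert_\infty$ by a boundary term that is negligible along a F\o lner sequence, and $\omega\mapsto F_\omega^R$ transforms equivariantly under the group translations, which act ergodically on the percolation space. Hence I would apply a Banach-space valued ergodic theorem for almost-additive set functions along tempered F\o lner sequences of monotiles (a Lindenstrauss-type statement) to produce a non-random distribution function $N_R$ with $\Vert F_\omega^R(Q_n)/|Q_n|-N_R\Vert_\infty\to0$ almost surely.

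\textbf{Step 3 (limit in $R$ and conclusion).} For $R\le R'$, Step~1 places both $F_\omega^R(Q_n)/|Q_n|$ and $F_\omega^{R'}(Q_n)/|Q_n|$ within $\epsilon$ of $F_\omega(Q_n)/|Q_n|$ for large $n$, so $\Vert N_R-N_{R'}\Vert_\infty\le2\epsilon$; thus $(N_R)$ is Cauchy and $N:=\lim_R N_R$ is again a distribution function (a uniform limit of distribution functions is one). The theorem then follows from
\[
 \Bigl\Vert\tfrac{F_\omega(Q_n)}{|Q_n|}-N\Bigr\Vert_\infty \le \Bigl\Vert\tfrac{F_\omega(Q_n)}{|Q_n|}-\tfrac{F_\omega^R(Q_n)}{|Q_n|}\Bigr\Vert_\infty + \Bigl\Vert\tfrac{F_\omega^R(Q_n)}{|Q_n|}-N_R\Bigr\Vert_\infty + \Vert N_R-N\Vert_\infty,
\]
by first choosing $R$ large (first and third terms small, by Step~1 and Cauchyness) and then $n$ large (middle term small, by Step~2); only countably many exceptional $\omega$-sets are used, so their union is a null set.

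\textbf{Main obstacle.} The genuine difficulty is that $\Delta_\omega$ is neither bounded nor of finite range, so neither the almost-additive ergodic theorem nor the usual rank-perturbation bounds apply to it directly. The decisive point is the probabilistic input of Section~3: one must show that long edges are rare enough that their number incident to $Q_n$ is $o(|Q_n|)$ \emph{simultaneously} for a whole sequence of truncation radii $R_k\to\infty$ and along the \emph{entire} F\o lner sequence at once --- this simultaneity is what makes the truncation error genuinely vanish in the limit, and it is precisely where the exponential tail of the Bernstein inequality, outrunning the (at least linear) growth of $|Q_n|$, is indispensable. With the truncation thus controlled, the remaining arguments follow the established finite-range theory.
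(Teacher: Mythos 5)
Your proposal is sound and its probabilistic core coincides with the paper's: the number of edges of length $>R$ incident to $Q_n$ is controlled by the Bernstein inequality of Section~3, converted into a sup-norm estimate on eigenvalue counting functions via the rank perturbation lemma (Lemma~\ref{lemma_rank}, rather than ``interlacing''), and made almost sure along the sequence by Borel--Cantelli using $|Q_n|\geq n$. Where you genuinely diverge is in the deterministic/ergodic part. You truncate the operator itself, i.e.\ work with $\Delta^R_\omega[Q]$, invoke the finite-range Banach-space-valued ergodic theorem of \cite{LenzSV-10} as a black box to get a non-random limit $N_R$ for each fixed $R$, and then pass to $R\to\infty$ by a Cauchy argument (the monotonicity of the long-edge count in $R$ indeed makes one radius $R(\epsilon_k)$ per $\epsilon_k$ suffice). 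The paper instead keeps the full operator but restricts to the shrunken sets $Q_R=Q\setminus\partial^RQ$, proves a weak additivity statement (Lemma~\ref{lemma_add}) and a comparison with the Laplacians of induced subgraphs (Lemma~\ref{lemma_F-tildeF}) on the high-probability event $\tilde\Omega(\delta,R,Q_j)$, and then runs the monotile tiling and the frequency argument (Lindenstrauss's theorem, Lemma~\ref{lemma_nu}) directly, obtaining the quantitative bound of Lemma~\ref{lemma_unif} and a diagonal choice $R(n)$, $\delta(j)$. Your route is shorter given \cite{LenzSV-10}, but it requires checking that the truncated percolation Hamiltonian fits that framework (encoding the $R$-neighbourhood configurations as a finite-alphabet colouring, verifying deterministic almost additivity with boundary term $C|\partial^RQ|$, and establishing a.s.\ existence of pattern frequencies via Lindenstrauss before the deterministic theorem applies) --- points you gloss over but which are routine. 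What the paper's integrated argument buys beyond uniform convergence is the explicit representation $N=\lim_n\sum_{S\in\cS(Q_n)}\nu_S\,\tilde F(S)/|Q_n|$ in Theorem~\ref{theorem_erg}, which is what feeds the analysis of discontinuities in Section~6; your construction of $N$ as $\lim_R N_R$ does not directly provide this formula, though it fully proves the statement as posed.
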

Hence we give an answer to the above formulated question concerning the convergence of the eigenvalue counting functions. Notice that as we consider the supremum norm this theorem proves uniform convergence, which goes far beyond the usually shown pointwise convergence. Another important feature of this result is that the limit-function is non-random. This is not surprising once one notes that there is an ergodic theorem in the background.
In last section Theorem \ref{theorem_erg} is applied to investigate the points of discontinuity of the IDS.

Now we compare the content of this paper to results of previous ones. We start with work where properties of long-range percolation graphs have been studied.
In \cite{AizenmanB-87} and \cite{AntunovicV-08a} the authors investigated the size of percolation clusters in the subcritical phase. While the first considered a model on $\ZZ^d$, the latter focused on the more general class of quasi-transitive graphs.
Work which is closely related to ours was done in \cite{AntunovicV-08b}, where the asymptotic behaviour of the IDS was analysed. In fact it was shown that the IDS (corresponding to the graph Laplacian) exhibits exponential behaviour at the bottom of the spectrum.
Another approach to the study of spectral properties was chosen by Ayadi \cite{Ayadi-09}. He investigates the resolvent of operators on long-range percolation graphs via a finite volume analogues and pays special interest to an associated correlation function.

The approximability of the integrated density of states is studied in various instances in the literature. The first seminal results where obtained by Pastur \cite{Pastur-71} and Shubin \cite{Shubin-79} who proved pointwise convergence of the finite volume approximants in the context of ergodic random operators and almost periodic operators defined on the Euclidean space. Based on these results it is nowadays well known that operators defined on $\RR^d$ or $\ZZ^d$ obeying a certain kind of ergodicity give rise to pointwise convergent sequence of approximating functions.
Beside this many similar results have been obtained for more complex geometric settings and operators. See for instance \cite{Sznitman-89,Sznitman-90,AdachiS-93,PeyerimhoffV-02,LenzPV-04}, where periodic Laplace and Schr\"odinger operators on manifolds have been investigated. Related work in the context of periodic graphs has been done in \cite{MathaiY-02,MathaiSY-03,DodziukLMSY-03,Veselic-05b}. 

In the topology of pointwise convergence the existence of the IDS has been mostly obtained for operators of finite range but also for operators which do not obey this property. See for instance \cite{PasturF-92} and references therein, where the authors prove pointwise convergence of the eigenvalue counting functions for certain random, symmetric operators on $\ell^2(\ZZ^d)$ which are not necessarily of finite range.

Note that in all of the above mentioned cases the authors obtained either pointwise convergence on the whole real axis or pointwise convergence continuity points of the IDS. Both types are much weaker than the convergence with respect to the supremum norm.
This is of special interest in the context of quasi-periodic and percolation models where it is known (see \cite{KlassertLS-03} respectively \cite{ChayesCFST-86,Veselic-05b}) that the set of points of discontinuity of the IDS is usually very large and can even be dense in the spectrum.

Recently uniform convergence has been shown for several types of operators and geometries. In \cite{LenzV-09} the authors present a method which applies to a large class of discrete models. Among these are Anderson and quantum percolation models, quasi-crystal Hamiltonians on Delone sets, Harper operators, random hopping models as well as Hamiltonians associated to percolation on tilings. However, the ideas they use are based on the assumption that the underlying operator is of finite range. 
Another method to obtain uniform existence of the IDS has been invented in \cite{LenzS-06} for Delone dynamical systems and the associated
random operators. Here an ergodic theorem for certain Banach space valued functions has been established. This is applicable for eigenvalue counting functions of finite range operators and leads to their convergence with respect to the supremum norm. 
These ideas have been adapted to the case where the underlying space equals $\ZZ^d$ in \cite{LenzMV-08} and later on for Cayley graphs given through amenable groups \cite{LenzSV-10}. The considered operators therein fulfil certain ergodicity properties and are assumed to be of finite range as well.

The last mentioned papers are closely related to the present one as we make use of an ergodic theorem as well and study the same underlying geometry as in \cite{LenzSV-10}. However we go beyond these results in several ways. The most important difference is that we are able to treat operators which are not of finite range. This property has been used in various instances in the above mentioned papers \cite{LenzS-06,LenzMV-08,LenzV-09,LenzSV-10}, as many estimates therein are based on rank estimates for restrictions of the operator in question. To verify similar results for the present model, it proved to be usefull to apply ideas from the theory of large deviations, namely a Bernstein inequality. Roughly spoken this Bernstein inequality makes it possible to show that appropriate rank estimates hold with high probability. 
Another advantage is that we are able to give characterisation the set of points of discontinuity. In fact we prove that this set consists of all eigenvalues of all finite graphs and hence does not contain a transcendental number.

In summary, it can be said that there are several results which prove uniform existence of the IDS for models with finite range operators and there are results where pointwise convergence is shown for operators which are not necessarily of finite range. To the best of our knowledge, this is the first work where uniform convergence of the eigenvalue counting functions is shown for operators which are not of finite range.

\section{The model}
In this paper we consider long-range percolation models on amenable groups. Firstly we describe the group as a metric space and introduce certain definitions. Afterwords we give the details of the random process which generates the graph $\Gamma_\omega$.

 Let $G$ be a finitely generated group, $P$ a finite and symmetric set of generators and $\id$ the unit element in $G$. Every element $g\in G$ can be written as a product $g=p_{1}\cdots p_{L}$, where $p_{i}\in P$, $i=1,\dots,L$. We say that the distance between two distinct elements $g,h\in G$ equals $L$ if and only if $L$ is the smallest number such there are elements $p_1,\dots,p_L\in P$ satisfying $gh^{-1}=p_1\cdots p_L$. This gives a metric which we will denote by $d: G\times G\rightarrow \NN_0$
\[
d(g,h):=\min\{L\in \NN_0\ \vert \ p_0=\id,\ \exists p_1,\dots,p_L\in P \mbox{ such that } p_0 p_1\cdots p_L = gh^{-1}\}.
\] 
For a ball of radius $R$ around an element $x\in G$ we write $B_R(x):=\{g\in G\vert d(g,x)\leq R\}$ and $B_R:=B_R(\id)$ if $x$ equals the unit element $\id$. The set of all finite subsets of $G$ is denoted by $\cF(G)$. Given a set $Q\in \cF(G)$ we define the \textit{diameter} by $\diam (Q):=\max \{d(g,h)\vert g,h\in Q\}$ and use $\vert Q\vert$ for the \textit{cardinality} of $Q$. Furthermore we introduce the following notation related to the boundary of a finite subset $Q\subset G$:
\begin{equation}
\begin{split}
 &\partial_{\mathrm{int}}^R(Q):= \{x\in Q\,\vert\, d(x,G\setminus Q)\leq R\},\quad \partial_{\mathrm{ext}}^R(Q) := \{x\in G\setminus Q\,\vert\, d(x,Q)\leq R\}, \\[1ex]
&\partial^R(Q)      := \partial_{\mathrm{int}}^R(Q)\cup\partial_{\mathrm{ext}}^R(Q) \quad \mbox{and}\quad Q_R:=  Q\setminus \partial^R (Q).
\end{split}
\end{equation}
We use the notations $(Q_j)$ and $(Q_j)_{j\in \NN}$ for a sequence of finite subsets of $G$, where the index $j$ takes values in $\NN$ and for a fixed element $Q_j$ of such a sequence we write $Q_{j,R}$ instead of $(Q_j)_R=Q_j\setminus \partial^R (Q_j)$. It is well known \cite{Adachi-93}, that \textit{amenability} of $G$ is equivalent to the existence of a sequence $(Q_j)_{j\in \mathbb N}$ of finite subsets of $G$ such that
\begin{equation*}
\lim\limits_{j\rightarrow \infty} \frac{\vert SQ_j \setminus Q_j\vert}{\vert Q_j\vert}=0
\end{equation*}
holds. Such a sequence $(Q_j)_{j\in \mathbb N}$ is called \textit{F\o{}lner sequence}. It is easy to show that 
\[
 \lim_{j\rightarrow\infty}\frac{\vert\partial ^R Q_j\vert}{\vert Q_j\vert}
 =
 \lim_{j\rightarrow\infty}\frac{\vert\partial ^R_{\mathrm{int}} Q_j\vert}{\vert Q_j\vert}
 =
 \lim_{j\rightarrow\infty}\frac{\vert\partial ^R_{\mathrm{ext}} Q_j\vert}{\vert Q_j\vert}
 =
 \lim_{j\rightarrow\infty}\frac{1}{\vert Q_j\vert}
 =
 0
\]
holds for each F\o lner sequence $(Q_j)$ and $R>0$.
 Given subsets $Q,T\subset G$ such that $Q$ is finite, we say that $\{Qt\st t\in T\}$ is a \emph{tiling} of the group $G$, if $G$ is the disjoint union of the sets $Qt$, $t\in T$. In this situation $Q$ is called \emph{tile} or more precisely \emph{monotile}. An equivalent formulation is \emph{$Q$ tiles $G$}.
 An assumption on the group $G$ will be the following: there exists a F\o lner sequence $(Q_n)$ such that each element of the sequence is a monotile.

\begin{Remark}
 Let us briefly discuss this assumption. It can be inferred from \cite{OrnsteinW-87} that the class of groups, containing a F\o lner sequence of monotiles, covers all cyclic groups and solvable groups as well as all finite extensions thereof. Thus this condition is fulfilled for all elementary amenable groups.

 Moreover if $G$ contains a sequence of finite index subgroups $(G_n)$ such that one can choose the sequence of associated fundamental domains $(F_n)$ (with respect to $G$) as F\o lner sequence, then obviously $(F_n)$ fulfils the above condition. Weiss proved in \cite{Weiss-01} that such sequences exist in any residually finite, amenable group. Recently Krieger \cite{Krieger-07} weakend up the condition of being residually finite, more precisely he showed that it is enough to assume that there exists a sequence of finite index subgroups $(H_n)$ such that $\bigcap_{n\in \NN} H_n=\{ \id\}$. These considerations show that in particular any group of polynomial growth fits in our framework.
\end{Remark}
A sequence of $(Q_n)$ of finite subsets of $G$ is said to be \emph{tempered} if for some $C>0$ and all $n\in \NN$
\[
\left\vert \bigcup_{k<n}Q_k^{-1}Q_n\right\vert \leq C\vert Q_n\vert
\]
holds. It can be shown, that each F\o{}lner sequence has a tempered subsequence, see e.g. \cite{Lindenstrauss-01}. We call a sequence $(Q_n)$ \emph{strictly increasing} if $|Q_{n+1}|>|Q_n|$ for all $n\in \NN$. Again one can show that each F\o{}lner sequence has a strictly increasing subsequence. As each subsequence of a strictly increasing sequence is strictly increasing as well, this gives that there is a strictly increasing tempered F\o lner sequence in each amenable group.

We continue with describing the randomness. Let the set of vertices $V$ be given by the elements of the group $G$ and let $E$ be the set of edges of the complete undirected graph $\Gamma=\Gamma(V,E)$. Thus an edge $e\in E$ is an unordered pair of vertices $x,y\in V$ which we will denote by $e=[x,y]$. Let an arbitrary element $p=(p(x))_{x\in G}\in \ell^1(G)$ satisfying 
\begin{equation}\label{def_p}
0\leq p(x) \leq 1 \hspace{1cm} \mbox{and}\hspace{1cm} p(x)=p(x^{-1})\hspace{1cm} \mbox{for all } x\in G  
\end{equation}
be given. In order to generate the a random subset $E_\omega\subset E$ by a percolation process we define for each $e\in E$ the probability that the edge $e=[x,y]$ is an element of $E_\omega$ to be equal to $p(xy^{-1})$.

  More precisely we consider the following probability space: the sample space is given by $\Omega=\{0,1\}^E$ the set of all possible configurations. We take $\cA$ to be the $\sigma$-algebra of subsets of $\Omega$ generated by the cylinder sets. Finally we define the product measure $\PP=\prod_{e\in E}\PP_{e}$ where for each $e=[x,y]\in E$ the probability measure $\PP_e$ on $\{0,1\}$ is given by
 \[
  \PP_{e}(\omega(e)=1)=p(xy^{-1})\hspace{1cm}\mbox{and}\hspace{1cm} \PP_{e}(\omega(e)=0)=1-p(xy^{-1}).
 \]
Thus each $\omega\in\Omega$ gives rise to a graph $\Gamma_\omega=(V,E_\omega)$. Now we discuss an alternative definition of the long-range percolation process.
\begin{Remark}
 We introduced the distribution of the probabilities via an arbitrary function $p\in\ell^1(G)$ satisfying (\ref{def_p}). There is an equivalent and in physical communities more common way to do so.
 
 For each pair of vertices $x,y\in G$ let $J_{x,y}$ be a real number such that
\begin{itemize}
 \item $J_{xz,yz}=J_{x,y}$ for all $z\in G$,
 \item $J:=J_x:= \sum_{y\in G} J_{x,y}$ is finite and independent of $x\in G$.
\end{itemize}
We fix $\beta>0$ and declare an edge $[x,y]$ to be open with probability $1-\eul^{-\beta J_{x,y}}$. To see the equivalence to the above definition it suffices to show that    $\sum_{x\in G}p(x)<\infty$ holds if and only if  $\sum_{y\in G} J_{x,y}<\infty$, where $p(xy^{-1})=1-\eul^{-\beta J_{x,y}}$. Using that $1-\eul^{-s}\leq s$ for all $s\in \RR$ one obtains
\[ 
\sum_{x\in G}p(x) 
= \sum_{y\in G}p(xy^{-1}) 
= \sum_{y\in G} 1-\eul^{-\beta J_{x,y}}
\leq \beta \sum_{y\in G} J_{x,y}.\]
To prove the converse direction we apply Taylors formula, which shows that there exists a constant $T>0$ such that
\[
 1-\eul^{-\beta J_{x,y}}
 = \beta J_{x,y} - \sum_{k=2}^\infty \frac{(-\beta J_{x,y})^k}{k!} 
 \geq \frac{1}{2} \beta J_{x,y}
\]
holds for all $x,y\in G$ satisfying $d(x,y)\geq T$. Thus we get
\[
 \sum_{y\in G} J_{x,y}
=\sum_{ \ato{y\in G}{d(x,y)\leq T}} J_{x,y}+  \sum_{\ato{y\in G}{d(x,y)>T} } J_{x,y}
\leq \sum_{\ato{y\in G}{d(x,y)\leq T}} J_{x,y}+\frac{2}{\beta} \sum_{\ato{y\in G}{d(x,y)>T}} \left(1-\eul^{-\beta J_{x,y}}\right)
\leq c  \sum_{x\in G}p(x)
\]
for $c>0$ large enough.
\end{Remark}
Note that by definition $[x,y]=[y,x]$ and $\PP([x,y]\in E_\omega)=\PP([xz,yz]\in E_\omega)=p(xy^{-1})$. Furthermore we get for $x\in G$
\begin{equation}\label{def_epsilon(R)}
\epsilon(R):=\sum_{y\in G\setminus B_R(x)} p(xy^{-1})=\sum_{y\in G\setminus B_R} p(y)
\end{equation}
and $ \lim_{R\rightarrow\infty}\epsilon(R) = 0$ since $p\in \ell^1(G)$.
The next result shows that for almost all realisations the graph $\Gamma_\omega$ is locally finite. To this end we define for given $x\in G$ and $\omega\in \Omega$ the vertex degree of $x$ in $\Gamma_\omega$ by
 \[
  m_\omega(x):=\vert \{y\in G\st [x,y]\in E_\omega\}\vert \in [0,\infty] .
 \]
\begin{Lemma}\label{lemma_locfin}
 There exists a set $\Omega_{\mathrm{lf}} \subset\Omega$ of full measure such that $m_\omega(x)$ is finite for all $x\in G$ and all $\omega\in \Omega_{\mathrm{lf}}$.
\end{Lemma}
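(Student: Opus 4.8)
The plan is to treat one vertex at a time: for each fixed $x\in G$ the degree $m_\omega(x)$ is a sum of independent Bernoulli variables with finite total mean, hence finite almost surely, and then to intersect the resulting full-measure events over the countable group $G$.

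First I would observe that, for fixed $x\in G$, the random variables $\bigl(\1[[x,y]\in E_\omega]\bigr)_{y\in G}$ are independent (they depend on distinct coordinates of $\omega\in\Omega=\{0,1\}^E$, since the edges $[x,y]$ are pairwise distinct) with $\PP([x,y]\in E_\omega)=p(xy^{-1})$, and that by the substitution $z=xy^{-1}$ one has
\[
\sum_{y\in G}\PP\bigl([x,y]\in E_\omega\bigr)=\sum_{y\in G}p(xy^{-1})=\sum_{z\in G}p(z)=\Vert p\Vert_{\ell^1(G)}<\infty .
\]
The first Borel--Cantelli lemma then shows that, with probability one, only finitely many of the events $\{[x,y]\in E_\omega\}$, $y\in G$, occur; equivalently $\PP(\Omega_x)=1$, where
\[
\Omega_x:=\{\omega\in\Omega\st m_\omega(x)<\infty\}.
\]
The set $\Omega_x$ is $\cA$-measurable because $m_\omega(x)$ is a monotone limit of finite partial sums of cylinder-set indicators (here one uses that $G$ is countable). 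Alternatively one could bypass Borel--Cantelli via Markov's inequality: $\EE[m_\omega(x)]=\Vert p\Vert_{\ell^1(G)}$ gives $\PP(m_\omega(x)\ge n)\le \Vert p\Vert_{\ell^1(G)}/n$, hence $\PP(m_\omega(x)=\infty)=\lim_{n\to\infty}\PP(m_\omega(x)\ge n)=0$.

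Finally I would set $\Omega_{\mathrm{lf}}:=\bigcap_{x\in G}\Omega_x$. Since $G$ is finitely generated it is countable, so $\Omega_{\mathrm{lf}}$ is a countable intersection of sets of full measure and therefore $\PP(\Omega_{\mathrm{lf}})=1$; by construction $m_\omega(x)<\infty$ for every $x\in G$ and every $\omega\in\Omega_{\mathrm{lf}}$, which is the assertion. There is no serious obstacle in this argument; the only points that warrant a word of justification are the measurability of $\Omega_x$ and the observation that countability of $G$ renders the intersection over all $x$ harmless.
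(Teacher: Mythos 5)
Your proof is correct and takes essentially the same route as the paper's: fix $x$, use $\sum_{y\in G}p(xy^{-1})=\Vert p\Vert_{\ell^1(G)}<\infty$ together with the (first) Borel--Cantelli lemma to conclude $\PP(\Omega_x)=1$, then exploit countability of $G$ to handle all vertices at once. The paper phrases the last step as a union bound over the complements $\Omega\setminus\Omega_x$ rather than a countable intersection of full-measure sets, but that is the same argument; your added remarks on measurability and the Markov-inequality alternative are harmless extras.
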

\begin{proof}
 Fix an element $x\in G$. For each $y\in G$ we denote by $A_y:=\{ [x,y]\in E_\omega\}$ the event that $x$ and $y$ are adjacent. Since
 \[
  \sum_{y\in G} \PP(A_y) = \sum_{y\in G} p(xy^{-1}) <\infty
 \]
we can apply Borel-Cantelli Lemma which gives that there exists a set $\Omega_x\subset \Omega$ with probability one such that $m_\omega(x)$ is finite for each $\omega\in \Omega_x$. This implies
\begin{eqnarray*}
 \PP(\{\exists x\in G \mbox{ such that } m_\omega(x)=\infty \})&=&\PP\left(\bigcup_{x\in G} \{ m_\omega(x)=\infty \}\right)\\
 &\leq & \sum_{x\in G} \PP\left(\{ m_\omega(x)=\infty \}\right)\\
 &\leq & \sum_{x\in G} \PP\left(\Omega\setminus \Omega_x\right)
\end{eqnarray*}
As $G$ can have only countable many elements the claim follows.
\end{proof}

We go on defining certain random variables. Given an edge $e\in E$ the random variable $X_e(\omega)$ is equal 
to one if $e$ is an element of $E_\omega$ and zero otherwise. 
If an edge is given by a pair of vertices $[x,y]$ it is obvious that 
$X_{[x,y]}=X_{[y,x]}$ and its distribution depends only on the value $xy^{-1}$.

For fixed $R\in \NN$ and a finite subset $Q=\{x_1,\dots ,x_{\vert Q\vert}\}\subset G$ we define random variables $Y_i, i=1,\dots ,\vert Q\vert$ by
\begin{equation}\label{def_Y_i}
 Y_i(\omega)=\sum_{y\in M_i^R } X_{[x_i,y]}(\omega)\hspace{0.5cm}\mbox{where}\hspace{0.5cm} M_i^R:=\{x\in G\st d(x,x_i)>R,\ x\neq x_j\ \forall j<i\}.
\end{equation}
Thus, $Y_i$ is the random variable counting the edges of length larger than $R$, 
being incident to $x_i$ and not counted by any $Y_j, j=1,\dots,i-1$. 
Note that the variables $Y_i$ are independent and Lemma \ref{lemma_locfin} yields $\PP(Y_i=\infty)=0$, $i=1,\dots,|Q|$. Furthermore the distribution functions for these random variables fullfill $ F_{Y_1}(z)\leq F_{Y_i}(z)$ for all $i\in \{1,\dots,|Q|\}$ and all $z\in \RR$.
By equation (\ref{def_epsilon(R)}) the expectation value $\EE(Y_1)$ equals $\epsilon(R)$. 
We denote the centred random variable $Y_i-\EE(Y_i)$ by $\bar Y_i$ for all $i=1,\dots,\vert Q\vert$ 
and set $Y:= Y_1$, $\bar Y:=\bar Y_1$. The aim of Lemma \ref{lemma_prob} is to describe the distribution of the variables $Y_i$.

\begin{Lemma}\label{lemma_prob}
 Let $R\in \NN$, $Q=\{x_1,x_2,\dots,x_{\vert Q\vert}\}\in\cF(G)$ and $Y_i$, $i=1,\dots,|Q|$ be given as above. Then the estimate 
\[
   \PP(Y_i\geq t)\leq c\eul^{-t}
\]
holds for all $t\in \NN$ and all $i=1,\dots,|Q|$, where $c\in \RR$ is given by
\[
 c=\prod_{y\in G} \left(1+ p(y)(\eul-1)\right).
\]
 
\end{Lemma}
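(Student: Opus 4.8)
The plan is to run a Chernoff-type exponential moment estimate. Since $Y_i\geq 0$ and $Y_i$ is almost surely finite by Lemma~\ref{lemma_locfin}, for any $t$ we have $\{Y_i\geq t\}=\{\eul^{Y_i}\geq\eul^{t}\}$, so Markov's inequality yields
\[
  \PP(Y_i\geq t)\;\leq\;\eul^{-t}\,\EE\!\left(\eul^{Y_i}\right).
\]
Thus the assertion reduces to the single bound $\EE(\eul^{Y_i})\leq c$, to be established uniformly in $i$ and in the choice of $Q$.

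To compute the exponential moment I would use independence. By (\ref{def_Y_i}), $Y_i=\sum_{y\in M_i^R}X_{[x_i,y]}$ is a sum of independent Bernoulli variables with $\PP(X_{[x_i,y]}=1)=p(x_iy^{-1})$, so $\EE(\eul^{X_{[x_i,y]}})=1+p(x_iy^{-1})(\eul-1)$. Enumerating $M_i^R$ and applying the monotone convergence theorem to the partial sums (which increase pointwise to $Y_i$, and hence so do their exponentials) gives
\[
  \EE\!\left(\eul^{Y_i}\right)\;=\;\prod_{y\in M_i^R}\EE\!\left(\eul^{X_{[x_i,y]}}\right)\;=\;\prod_{y\in M_i^R}\bigl(1+p(x_iy^{-1})(\eul-1)\bigr).
\]
Since $p\in\ell^1(G)$ and $\log(1+s)\leq s$ for $s\geq 0$, we have $\sum_{y\in G}\log\bigl(1+p(y)(\eul-1)\bigr)\leq(\eul-1)\sum_{y\in G}p(y)<\infty$, so this product converges; in particular $c$ is finite and the final estimate is non-trivial.

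Finally I would enlarge the index set. Each factor $1+p(x_iy^{-1})(\eul-1)$ is $\geq 1$ because $p\geq 0$ and $\eul>1$, so passing from $M_i^R\subset G$ to all of $G$ can only increase the product; and since $y\mapsto x_iy^{-1}$ is a bijection of $G$, the change of variable $z=x_iy^{-1}$ yields
\[
  \EE\!\left(\eul^{Y_i}\right)\;\leq\;\prod_{y\in G}\bigl(1+p(x_iy^{-1})(\eul-1)\bigr)\;=\;\prod_{z\in G}\bigl(1+p(z)(\eul-1)\bigr)\;=\;c,
\]
which completes the argument.

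There is no serious obstacle here: the only points needing care are the interchange of expectation with the (possibly infinite) product, handled by monotone convergence together with the $\ell^1$-summability of $p$, and making the bound manifestly independent of $i$ and of the position of $Q$, which is automatic once the product is taken over the whole group. As an alternative one could first reduce to the case $i=1$ using the stochastic domination $F_{Y_1}\leq F_{Y_i}$ recorded just above the lemma, in which case $M_1^R=G\setminus B_R(x_1)$ makes the enlargement step transparent.
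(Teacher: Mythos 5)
Your proposal is correct and follows essentially the same route as the paper: Markov's inequality applied to $\eul^{Y_i}$, factorization of the exponential moment by independence, and the $\ell^1$-summability of $p$ to bound the resulting product by $c$. The only cosmetic difference is that you bound $\EE(\eul^{Y_i})$ directly for each $i$ (via enlarging the index set and a change of variables), whereas the paper first reduces to $Y_1$ using the stochastic domination $\PP(Y_i\geq t)\leq \PP(Y_1\geq t)$ --- the alternative you yourself note at the end.
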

\begin{proof}
Let $y\in G$ be arbitrary and set $x:=x_1$ as well as $Y=Y_1$, then 
\[
 \EE(\eul^{X_{[x,y]}}) = p(xy^{-1}) \eul + (1-p(xy^{-1})) \eul^0 = 1+ p(xy^{-1})(\eul-1)
\]
holds. The independence of $X_e$, $e\in E$ implies
\[
 \EE(\eul^Y) = \prod_{y\in G\setminus B_R(x)} \EE (\eul^{X_{[x,y]}}) 
= \prod_{y\in G\setminus B_R(x)} \left(1+ p(xy^{-1})(\eul-1)\right) 
\leq \prod_{y\in G} \left(1+ p(y)(\eul-1)\right)
\]
since $Y=\sum_{y\in G\setminus B_R(x)} X_{[x,y]}$. The product converges to a finite number since 
\[
 \prod_{y\in G} \left(1+ p(y)(\eul -1)\right)
=\exp \left(\sum_{y\in G}\ln(1+p(y)(\eul -1))\right)
\leq \exp \left((\eul -1) \sum_{y\in G} p(y)\right) <\infty
\]
holds by assumption on $p$. Now we use Markov's inequality to obtain for given $i\in\{1,\dots,|Q|\}$
\[
  \PP(Y_i\geq t)
\leq \PP(Y\geq t)
\leq \eul^{-t} \EE(\eul^Y),
\]
which implies the claimed inequality with constant $c$ not depending on $R$.
\end{proof}
Lemma \ref{lemma_prob} implies that for each $k\in\NN$ and $i\in\{1,\dots,\vert Q\vert\}$ the moments $\EE (Y_i^k)$ and $\EE (\bar Y_i^k)$ exist. This is clear from
\[
 |\EE(Y_i^k)|
= \sum_{t=0}^\infty t^k \PP(Y_i = t)
\leq \sum_{t=0}^\infty t^k \PP(Y_i \geq t)
\leq c \sum_{t=0}^\infty t^k \eul^{-t} 
< \infty
\]
and
\[
 |\EE(\bar Y_i^k)|
=\left| \sum_{t=0}^\infty (t-\EE (Y_i))^k \PP(Y_i = t)\right|
\leq \sum_{t=0}^\infty \left|t-\EE (Y_i)\right|^k \PP(Y_i \geq t)
\leq c \sum_{t=0}^\infty |t-\EE (Y_i)|^k \eul^{-t} 
<\infty.
\]

\section{Bernstein inequality}
In this section we verify a Bernstein inequality for independent random variables $\xi_i$. This is a result from the theory of large deviations. It estimates the probability that the sum of the random variables differs too much from its expectation value. The proof follows ideas from \cite{ArakZ-88} where similar estimates are shown.

\begin{Theorem}[Bernstein inequality] \label{theorem_bernstein}
 Let $\xi_1,\dots,\xi_n$ be independent random variables satisfying 
 \begin{equation}\label{bern_cond}
  \EE(\xi_i)=0\quad \mbox{and}\quad |\EE(\xi_i^k)|\leq \frac{1}{2} \tau^{k-2}k!
 \end{equation}
 for all $i=1,\dots,n$, all $k\in\NN\setminus\{1\}$ and some constant $\tau>0$.
 Then
\[
\PP (S \geq \alpha)
\leq 
\left\{\begin{array}{ll} \eul^{-\frac{\alpha^2}{   4n}} &, 0\leq \alpha \leq n/\tau\\ \eul^{-\frac{\alpha}{4\tau}} &, \alpha > n/\tau \end{array}\right. ,
\]
where $S=\sum_{i=1}^n \xi_i$.
\end{Theorem}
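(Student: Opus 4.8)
The plan is to run the exponential Chebyshev (Chernoff) method: bound $\PP(S\ge\alpha)\le \eul^{-h\alpha}\,\EE(\eul^{hS})$ for a well-chosen parameter $h>0$, control the moment generating function of each $\xi_i$ by means of hypothesis \eqref{bern_cond}, multiply over $i$ using independence, and finally optimise in $h$.

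First I would fix $h\in(0,1/\tau)$ and write, for each $i$,
\[
\EE(\eul^{h\xi_i})=1+h\,\EE(\xi_i)+\sum_{k=2}^\infty\frac{h^k}{k!}\,\EE(\xi_i^k)
=1+\sum_{k=2}^\infty\frac{h^k}{k!}\,\EE(\xi_i^k),
\]
using $\EE(\xi_i)=0$. The interchange of expectation and summation must be justified; it follows from \eqref{bern_cond}, since the even moments $\EE(\xi_i^{2m})=|\EE(\xi_i^{2m})|\le\tfrac12\tau^{2m-2}(2m)!$ dominate the absolute moments $\EE|\xi_i|^k$ via the power-mean inequality, up to factors that do not affect convergence of $\sum_k\frac{h^k}{k!}\EE|\xi_i|^k$ for small $h$, so dominated convergence applies. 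Then, by \eqref{bern_cond},
\[
\bigl|\EE(\eul^{h\xi_i})-1\bigr|\le\sum_{k=2}^\infty\frac{h^k}{k!}\cdot\frac12\tau^{k-2}k!
=\frac{h^2}{2}\sum_{j=0}^\infty(h\tau)^j=\frac{h^2}{2(1-h\tau)},
\]
whence $\EE(\eul^{h\xi_i})\le\exp\!\bigl(\tfrac{h^2}{2(1-h\tau)}\bigr)$ by $1+x\le\eul^x$. Independence gives
\[
\EE(\eul^{hS})=\prod_{i=1}^n\EE(\eul^{h\xi_i})\le\exp\!\Bigl(\frac{n h^2}{2(1-h\tau)}\Bigr),
\]
and Markov's inequality yields $\PP(S\ge\alpha)\le\exp\!\bigl(-h\alpha+\tfrac{n h^2}{2(1-h\tau)}\bigr)$ for every $h\in(0,1/\tau)$.

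Next I would make the classical choice $h=\dfrac{\alpha}{n+\alpha\tau}$, which lies in $(0,1/\tau)$ for $\alpha>0$ and satisfies $1-h\tau=\dfrac{n}{n+\alpha\tau}$; substituting collapses the exponent to $-\dfrac{\alpha^2}{2(n+\alpha\tau)}$, so that
\[
\PP(S\ge\alpha)\le\exp\!\Bigl(-\frac{\alpha^2}{2(n+\alpha\tau)}\Bigr)
\]
for all $\alpha\ge0$ (the case $\alpha=0$ being trivial). Finally I would split into the two regimes claimed: if $0\le\alpha\le n/\tau$ then $n+\alpha\tau\le 2n$, so the exponent is at most $-\alpha^2/(4n)$; if $\alpha>n/\tau$ then $n+\alpha\tau<2\alpha\tau$, so the exponent is strictly less than $-\alpha/(4\tau)$. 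This reproduces exactly the two-line estimate in the statement.

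The algebra is entirely routine; the single point that genuinely requires care is the justification of the termwise Taylor expansion of the moment generating function — equivalently, the finiteness of $\EE(\eul^{h\xi_i})$ for small $h$ — from the moment bounds \eqref{bern_cond} alone, and that is where I expect the only real obstacle to lie. (In the paper's intended application the $\xi_i$ are the centred variables $\bar Y_i$, for which Lemma~\ref{lemma_prob} already supplies a direct exponential moment bound $\PP(Y_i\ge t)\le c\,\eul^{-t}$, so there this subtlety is immediate.)
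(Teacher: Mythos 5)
Your proposal is correct and uses essentially the same exponential-Chebyshev (Chernoff) argument as the paper: the only difference is that you keep the factor $1/(1-h\tau)$ and optimise with the classical choice $h=\alpha/(n+\alpha\tau)$, obtaining the unified bound $\exp\bigl(-\alpha^2/(2(n+\alpha\tau))\bigr)$ before splitting into the two regimes, whereas the paper restricts $h\in(0,\tfrac{1}{2\tau}]$ so that $\EE(\eul^{h\xi_i})\le\eul^{h^2}$ and then takes $h=\alpha/(2n)$ in the first regime and $h=1/(2\tau)$ in the second. Both routes give the stated estimate; the termwise expansion of $\EE(\eul^{h\xi_i})$, which you justify via the even moments (so that $\EE(\eul^{h|\xi_i|})<\infty$ for $h\tau<1$), is a point the paper's proof passes over silently, so there is no gap on your side.
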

\begin{proof}
 For fixed $i\in\{1,\dots,n\}$ and $h\in(0,\frac{1}{2\tau}]$ we have by assumption on $\xi_i$
\[
 \EE (\eul^{h \xi_i})
= \sum_{k=0}^\infty \frac{\EE((h\xi_i)^k)}{k!} 
\leq 1+ h^2 \sum_{k=2}^\infty h^{k-2}\frac{|\EE(\xi_i^k)|}{k!}
\leq 1+ \frac{h^2}{2} \sum_{k=2}^\infty (h\tau)^{k-2}
\leq 1+h^2
\leq \eul^{h^2}.
\]
Furthermore the independency of the random variables implies
\[
 \EE (\eul^{h S} )
= \prod_{i=1}^n \EE (\eul^{h\xi_i} )
\leq \prod_{i=1}^n  \eul^{h^2}
=   \eul^{nh^2}.
\]
Using this and Markov inequality we obtain
\begin{equation}\label{thm_bern1}
 \PP(S\geq \alpha) \leq \eul^{-\alpha h} \EE(\eul^{h S}) \leq \eul^{nh^2-\alpha h} 
\end{equation}
for each $\alpha>0$. In the case $0<\alpha\leq \frac{n}{\tau}$ set $h=\frac{\alpha}{2n}\leq \frac{1}{2 \tau}$. Then \eqref{thm_bern1} can be written as
\[
\PP(S\geq \alpha) \leq \eul^{-\frac{\alpha^2}{4n}}.
\]
If $\alpha \geq \frac{n}{\tau}$ we set $h=\frac{1}{2\tau}$ an conclude
\[
\PP(S\geq \alpha) \leq \eul^{-\frac{\alpha}{4\tau}},
\]
which proves the claimed estimate.
\end{proof}
 The next Lemma shows that the variables $Y_i$, $i=1,\dots,\vert Q\vert$ fulfil the conditions \eqref{bern_cond} with some parameter $\tau>0$, which is independent of $R$ and $Q$. This allows to apply Theorem \ref{theorem_bernstein} in order to prove an adapted inequality in Corollary \ref{corollary_bernstein}.

\begin{Lemma}\label{lemma_berstein_ineq}
 There exists an $R_0\in \NN$ such that for each $R\geq R_0$ the following holds: for any set $Q=\{x_1,\dots,x_{\vert Q\vert}\}\in \cF(G)$ and associated random variables $Y_i, i=1,\dots,\vert Q\vert$ given as in (\ref{def_Y_i}) each $\bar Y_i=Y_i-\EE (Y_i)$ satisfies the conditions \eqref{bern_cond} with $\tau=6 \prod_{y\in G} \left(1+ p(y)(\eul -1)\right) $.
 \end{Lemma}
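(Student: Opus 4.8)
The plan is to verify the two clauses of \eqref{bern_cond} separately: $\EE(\bar Y_i)=0$ is immediate from the definition of the centred variable, so the real content is the estimate $|\EE(\bar Y_i^k)|\le\tfrac12\tau^{k-2}k!$ for $k\ge 2$. First I would fix $R_0$: since $p\in\ell^1(G)$ the quantity $\epsilon(R)$ of \eqref{def_epsilon(R)} tends to $0$ and is non-increasing in $R$, so we may choose $R_0$ so that $\exp((\eul-1)\epsilon(R))\le 2$ (in particular $\epsilon(R)\le 1$) for all $R\ge R_0$. Throughout write $x=x_i$ and $\mu_i:=\EE(Y_i)=\sum_{y\in M_i^R}p(xy^{-1})$, and note $\mu_i\le\epsilon(R)$ because $M_i^R\subseteq G\setminus B_R(x)$ by \eqref{def_Y_i}.

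The case $k=2$ I would argue directly. By independence of the family $(X_{[x,y]})_y$,
\[
\EE(\bar Y_i^2)=\mathrm{Var}(Y_i)=\sum_{y\in M_i^R}p(xy^{-1})\bigl(1-p(xy^{-1})\bigr)\le\sum_{y\in M_i^R}p(xy^{-1})=\mu_i\le\epsilon(R)\le 1=\tfrac12\tau^{0}\,2!,
\]
which is exactly \eqref{bern_cond} for $k=2$.

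For $k\ge 3$ I would start from an exponential moment bound. By independence and $1+s\le\eul^{s}$,
\[
\EE(\eul^{Y_i})=\prod_{y\in M_i^R}\bigl(1+p(xy^{-1})(\eul-1)\bigr)\le\exp\Bigl((\eul-1)\sum_{y\in M_i^R}p(xy^{-1})\Bigr)\le\exp\bigl((\eul-1)\epsilon(R)\bigr)\le 2 .
\]
(The same product is also $\le\prod_{z\in G}(1+p(z)(\eul-1))=c$, which is what makes $c$ appear in $\tau$, but the $\epsilon(R)$-bound is all I need.) Since $Y_i\ge 0$, the series $\sum_{k\ge 0}\EE(Y_i^k)/k!=\EE(\eul^{Y_i})$ has non-negative terms, so each satisfies $\EE(Y_i^k)/k!\le\EE(\eul^{Y_i})-1\le 1$ for $k\ge 1$, hence $\EE(Y_i^k)\le k!$ for every $k\ge 0$. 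Expanding $\bar Y_i^k=\sum_{j=0}^k\binom kj(-\mu_i)^{k-j}Y_i^j$ and using $\mu_i\le 1$ together with $\binom kj j!=k!/(k-j)!$,
\[
\bigl|\EE(\bar Y_i^k)\bigr|\le\sum_{j=0}^k\binom kj\mu_i^{k-j}\EE(Y_i^j)\le k!\sum_{m=0}^{k}\frac{\mu_i^m}{m!}\le\eul\,k!<3\,k! .
\]
Since every factor of $c=\prod_{y\in G}(1+p(y)(\eul-1))$ is at least $1$ we have $\tau=6c\ge 6$, so $\tau^{k-2}\ge 6$ for $k\ge 3$ and therefore $|\EE(\bar Y_i^k)|<3k!\le\tfrac12\tau^{k-2}k!$, as required. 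Since $c$ and $R_0$ depend only on $p$, all constants are uniform in $Q$ and in $R\ge R_0$.

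The one point calling for care — and the one I would flag as the main (if minor) obstacle — is the case $k=2$: the exponential-moment route would only give $|\EE(\bar Y_i^2)|<3\cdot 2!$, whereas \eqref{bern_cond} demands $\le 1$, so one genuinely needs the explicit variance identity together with the choice $\epsilon(R_0)\le 1$. Everything else is routine (independence, $1+s\le\eul^s$, and the power series of $\exp$), so I would expect no serious difficulty beyond bookkeeping the constants.
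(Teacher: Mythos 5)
Your proof is correct, and it takes a genuinely different route from the paper's. The paper works through the tail estimate $\PP(Y_i\geq t)\leq c\,\eul^{-t}$ of Lemma \ref{lemma_prob}: for $k=2$ it splits the sum defining $\EE(\bar Y_i^2)$ into three regions via an auxiliary cut-off $T$ and a logarithmic condition on $\epsilon(R_0)$, arranging each piece to be at most $1/3$; for $k\geq 3$ it compares $\sum_{t\geq1}t^k\eul^{-t}$ with the Gamma integral, picks up the extra term $(k/\eul)^k$, and then needs Stirling's bound to absorb everything into $\tfrac12\tau^{k-2}k!$ with $\tau=6c$. You bypass all of this: for $k=2$ the exact variance identity for a sum of independent Bernoulli variables gives $\EE(\bar Y_i^2)=\sum p(1-p)\leq\epsilon(R)\leq1$ at once, and for $k\geq3$ you read off $\EE(Y_i^k)\leq k!$ termwise from the exponential-moment series $\EE(\eul^{Y_i})\leq\exp((\eul-1)\epsilon(R))\leq2$ and finish with a binomial expansion, giving $|\EE(\bar Y_i^k)|\leq\eul\,k!<\tfrac12\tau^{k-2}k!$ since $\tau=6c\geq6$. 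Your argument is shorter, avoids the integral comparison and Stirling, uses a simpler choice of $R_0$, and in fact shows the Bernstein conditions already hold with $\tau=6$ (independent of $c$), which would even sharpen the probability bound in Corollary \ref{corollary_bernstein}; the paper's version keeps the constant tied to $c$ because it routes everything through Lemma \ref{lemma_prob}. The only points you state without comment that deserve a word are routine: the variance of the infinite independent sum equals $\sum p(1-p)$ because the partial sums converge in $L^2$ (as $\sum p<\infty$), and the identity $\EE(\eul^{Y_i})=\sum_k\EE(Y_i^k)/k!$ is justified by monotone convergence since $Y_i\geq0$.
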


\begin{proof}
Notice that the existence of the moments $\EE (\bar Y_i^k)$, $k\in\NN$, $i\in\{1,\dots,|Q|\}$ is already clear from Lemma $\ref{lemma_prob}$. However it is not obvious that the conditions \eqref{bern_cond} hold with $\tau$ given as above. Furthermore we see $\tau=6c$, where $c$ is the constant given by Lemma \ref{lemma_prob}. In the special case where the second moment of $\bar Y_i$ equals zero, the conditions \eqref{bern_cond} are clearly fulfilled since then $\EE (\bar Y_i^k) =0$ for all $k\in \NN$, $i\in\{1,\dots,\vert Q\vert\}$.

Let $Q=\{x_1,\dots,x_{\vert Q\vert}\}\in \cF(G)$, $i\in \{1,\dots,\vert Q\vert \}$ and set $x:=x_1$. We firstly choose a certain constant $T\in \NN$ and give a condition for $R_0$ and in order to prove that $\EE (\bar Y_i^2)$ does not exceed one for all $i=1,\dots,n$ and all $R\geq R_0$. Let $T\in \NN$ be such that
\begin{equation}\label{lemma_bernstein_ineq2}
 \sum_{t=T+1}^\infty t^2 \eul^{-t}\leq \frac{1}{3c},
\end{equation}
where $c>0$ is the constant given by Lemma \ref{lemma_prob}. Now choose $R_0\in \NN$ such that
\begin{equation}\label{lemma_bernstein_ineq3}
 \epsilon(R)\leq -\frac{1}{2} \ln \left( 1- \left( 3 \sum_{t=1}^T t^2 \right)^{-1} \right)
\end{equation}
for all $R\geq R_0$. This choice implies
\begin{equation}\label{lemma_bernstein_ineq4}
\EE (Y_i) \leq \EE (Y) = \epsilon (R)\leq \frac{1}{3} \quad \mbox{and}\quad  p(y)\leq \frac{1}{2} \quad \mbox{for all }R\geq R_0, y\in G\setminus B_{R_0}.
\end{equation}
Furthermore we get for $R\geq R_0$
\[
 \PP(Y_i=0)
\geq \PP(Y=0)
 =   \PP\left(\sum_{y\in G\setminus B_R(x)} \!\!\!\! X_{[x,y]} =0\right)
 =   \prod_{y\in G\setminus B_R(x)} \!\!\!\!(1-p(xy^{-1}))
 =   \prod_{y\in G\setminus B_R } (1-p(y)).
\]
Now we use the inequality $1-z\geq \eul^{-2z}$ which holds for all $z\in [0,0.5]$ and obtain
\[
  \prod_{y\in G\setminus B_R } (1-p(y))
= \exp\left(\sum_{y\in G\setminus B_R } \ln (1-p(y))\right)
\geq \exp\left(-2\sum_{y\in G\setminus B_R } p(y)\right)
= \exp \left(-2 \epsilon(R)\right),
\]
which shows using \eqref{lemma_bernstein_ineq3}
\begin{equation}\label{lemma_bernstein_ineq5}
 \PP(Y_i\geq 1)=1-\PP(Y_i=0)\leq 1-\exp(-2 \epsilon(R))\leq \left(3 \sum_{t=1}^T t^2 \right)^{-1}
\end{equation}
As $\EE (\bar Y_i^2)$ can be written as
\[
 \EE (\bar Y_i^2 )
= \left\vert \EE \left((Y_i-\EE (Y_i))^2\right) \right\vert 
= \sum_{t=0}^\infty (t-\EE (Y_i))^2 \PP(Y_i=t)
\]
the estimates in \eqref{lemma_bernstein_ineq2},\eqref{lemma_bernstein_ineq4},\eqref{lemma_bernstein_ineq5} and Lemma \ref{lemma_prob} imply
\begin{align*}
\EE(\bar Y_i^2) &\leq (\EE (Y_i))^2 + \sum_{t=1}^T (t-\EE (Y_i))^2 \PP(Y_i=t) +\sum_{t=T+1}^\infty (t-\EE (Y_i))^2 \PP(Y_i=t)\\
 & \leq (\epsilon(R))^2  + \PP(Y_i\geq 1) \sum_{t=1}^T t^2 + \sum_{t=T+1}^\infty t^2 \PP(Y_i \geq t)\\
 & \leq \frac 1 3 + \frac 1 3 + \frac 1 3 =1.
\end{align*}

Now let $k\geq 3$. The $k$-th moment of $\bar Y_i$ is by definition the $k$-th central moment of $Y_i$ thus we get
\[
\left\vert \EE (\bar Y_i^k) \right\vert
= \left\vert \EE ((Y_i-\EE (Y_i))^k) \right\vert 
= \left\vert \sum_{t=0}^\infty  (t-\EE (Y_i))^k \PP(Y_i=t)\right\vert.
\]
Since $0\leq \EE (Y_i) \leq \frac 1 3$, see \eqref{lemma_bernstein_ineq2} we have that
\begin{eqnarray*}
\left\vert \sum_{t=0}^\infty  (t-\EE (Y_i))^k \PP(Y_i=t)\right\vert 
&\leq&  (\EE (Y_i))^k \PP(Y_i=0) + \sum_{t=1}^\infty  t^k \PP(Y_i=t)\\
&\leq&   (\EE (Y_i))^k   + \sum_{t=1}^\infty  t^k \PP(Y_i\geq t)
\end{eqnarray*}
holds. Using $\PP(Y_i\geq t)\leq \PP(Y\geq t)$ and $\EE(Y_i)\leq \EE(Y)$, this implies
\begin{equation*}
 \left\vert \EE (\bar Y_i^k) \right\vert 
\leq (\EE (Y))^k  + c\sum_{t=1}^\infty  t^k \eul^{-t},
\end{equation*}
where the last inequality holds with constant $c>0$ from the Lemma \ref{lemma_prob}. The function $f:[0,\infty]\rightarrow \RR$, $x\mapsto x^k \eul^{-x}$ takes its maximal value at the argument $x=k$. Therefore we get
\begin{eqnarray*}
\sum_{t=1}^\infty  t^k  \eul^{-t}
&=& \sum_{t=1}^{ k - 1}  t^k  \eul^{-t}   +  k^k \eul^{- k} +\sum_{t= k+1}^\infty  t^k  \eul^{-t}\\
&\leq& \int_0^{ k}x^k \eul^{-x} dx + k^k \eul^{-k} + \int_{ k}^\infty x^k \eul^{-x} dx \\
&=& \int_0^{\infty}x^k \eul^{-x} dx + k^k \eul^{-k} .
\end{eqnarray*}
Partial integration leads to
\[
 \int_0^{\infty}x^k \eul^{-x} dx = \int_0^{\infty} k x^{k-1}  \eul^{-x} dx =\dots= k!\int_0^{\infty} \eul^{-x} dx = k!
\]
Now it is enough to show that
\begin{equation}\label{lemma_berstein_ineq1}
2(\EE (Y))^k+ 2ck!+2c\left(\frac{k}{\eul}\right)^k \leq \tau^{k-2} k!
\end{equation}
holds for $\tau=6c$.
To this end we consider the three summands separately. The first one gives by \eqref{lemma_bernstein_ineq2} and as $\tau >1$
\[
 \frac{2(\EE (Y))^k}{\tau^{k-2} k!}\leq  \frac{1}{3}.
\]
The second summand gives
\[
\frac{2ck!}{\tau^{k-2} k!}=\frac{2c}{(6c)^{k-2} }\leq \frac 1 3
\]
and for the third summand we use Stirling formula $k!\geq k^k \eul^{-k} $ to obtain
\[
\frac{2c k^k}{\eul^k \tau^{k-2} k!}\leq \frac{2c}{{(6c)}^{k-2}}\leq \frac 1 3.
\]
This shows that \eqref{lemma_berstein_ineq1} holds, which finishes the proof.
\end{proof}

Given a finite set $Q=\{x_1,\dots,x_{\vert Q\vert}\}\subset G$ we will use this result to show that the probability that ``to many long edges'' are incident to a vertex in $Q$ is very small. To be precise, let $R\in\NN$ and $\delta >0$ be constants and set $\epsilon=\epsilon(R)=\EE (Y)$ as in (\ref{def_epsilon(R)}). We decompose the probability space $\Omega=\Omega_1(\delta,R,Q)\cup\Omega_2(\delta,R,Q)$ by setting
\begin{equation}\label{def_Omega_1}
\Omega_1(\delta,R,Q)
:=\left\{\omega\in \Omega\ \bigg\vert\ \sum_{i=1}^{\vert Q\vert} Y_i(\omega)\geq \vert Q\vert (\epsilon+\delta)\right\}
\ \mbox{ and }\ 
\Omega_2(\delta,R,Q)
:=\Omega\setminus \Omega_1(\delta,R,Q).
\end{equation}
where $Y_i$, $i=1,\dots,\vert Q\vert$ are given by (\ref{def_Y_i}). Thus the set $\Omega_1(\delta,R,Q)$ consists of all configurations where the number of edges of length longer than $R$ that are incident to at least one vertex in $Q$ is at least $\vert Q\vert (\epsilon(R) +\delta)$.

\begin{Corollary}\label{corollary_bernstein}
Let $R_0$ and $\tau$ be as in Lemma \ref{lemma_berstein_ineq}, let $\delta>0$ and $Q\in\cF(G)$ be given and define $\Omega_1(\delta,R,Q)$ as above. Then the following inequality holds
\begin{equation}\label{corollary_berstein1}
\PP (\Omega_1(\delta,R,Q))\leq \left\{\begin{array}{ll}\exp\left(-\frac{\delta^2 \vert Q\vert}{ 4 }\right) &, 0\leq \delta \leq \frac{1}{\tau}\\ \exp\left(-\frac{\delta \vert Q\vert} {4\tau}\right) &, \delta > \frac 1 \tau \end{array}\right. .
\end{equation}
\end{Corollary}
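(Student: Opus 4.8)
The plan is to deduce the estimate directly from the Bernstein inequality, Theorem~\ref{theorem_bernstein}, applied to the centred variables $\bar Y_i := Y_i - \EE(Y_i)$, $i = 1,\dots,|Q|$. One has to read the hypothesis as including $R \geq R_0$, since otherwise Lemma~\ref{lemma_berstein_ineq} is not available; with $R \geq R_0$ that lemma tells us precisely that the $\bar Y_i$ satisfy the conditions \eqref{bern_cond} with the stated constant $\tau = 6\prod_{y\in G}(1 + p(y)(\eul-1))$. Since the $Y_i$ are independent, so are the $\bar Y_i$, and Theorem~\ref{theorem_bernstein} therefore applies to $S := \sum_{i=1}^{|Q|}\bar Y_i$ with $n = |Q|$.

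The next step is to establish the inclusion $\Omega_1(\delta,R,Q) \subseteq \{S \geq |Q|\,\delta\}$. On $\Omega_1(\delta,R,Q)$ one has $\sum_{i=1}^{|Q|} Y_i \geq |Q|(\epsilon + \delta)$ by the definition \eqref{def_Omega_1}. On the other hand, the ordering $F_{Y_1}(z) \leq F_{Y_i}(z)$ noted before Lemma~\ref{lemma_prob} means that each $Y_i$ is stochastically dominated by $Y_1$, hence $\EE(Y_i) \leq \EE(Y_1) = \epsilon(R) = \epsilon$, and summing gives $\sum_{i=1}^{|Q|}\EE(Y_i) \leq |Q|\,\epsilon$. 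Subtracting, on $\Omega_1(\delta,R,Q)$ we obtain
\[
S = \sum_{i=1}^{|Q|} Y_i - \sum_{i=1}^{|Q|}\EE(Y_i) \geq |Q|(\epsilon+\delta) - |Q|\,\epsilon = |Q|\,\delta ,
\]
so that $\PP(\Omega_1(\delta,R,Q)) \leq \PP(S \geq |Q|\,\delta)$.

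It then remains to apply Theorem~\ref{theorem_bernstein} with $\alpha = |Q|\,\delta$ and $n = |Q|$. The case distinction $0 \leq \alpha \leq n/\tau$ versus $\alpha > n/\tau$ becomes exactly $0 \leq \delta \leq 1/\tau$ versus $\delta > 1/\tau$, and inserting $\alpha = |Q|\,\delta$, $n = |Q|$ into the two bounds $\eul^{-\alpha^2/(4n)}$ and $\eul^{-\alpha/(4\tau)}$ yields $\exp(-\delta^2|Q|/4)$ and $\exp(-\delta|Q|/(4\tau))$ respectively, which is \eqref{corollary_berstein1}. No step here is genuinely difficult; the only points requiring attention are the tacit restriction $R \geq R_0$ that makes Lemma~\ref{lemma_berstein_ineq} applicable, and the passage from the uncentred sum appearing in the definition of $\Omega_1(\delta,R,Q)$ to the centred sum $S$, which rests on the uniform bound $\EE(Y_i) \leq \epsilon$.
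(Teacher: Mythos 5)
Your proposal is correct and follows essentially the same route as the paper: centre the variables, observe that $\sum_i \bar Y_i \geq |Q|\delta$ on $\Omega_1(\delta,R,Q)$ (using $\EE(Y_i)\leq\EE(Y)=\epsilon$, which the paper leaves implicit), and apply Theorem \ref{theorem_bernstein} with $\alpha=\delta|Q|$, $n=|Q|$ via Lemma \ref{lemma_berstein_ineq}. Your explicit remarks about the tacit assumption $R\geq R_0$ and the uniform bound $\EE(Y_i)\leq\epsilon$ merely spell out details the paper's proof glosses over.
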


\begin{proof}
 By definition of $Y_i$, $\bar Y_i$ and $\epsilon=\epsilon(R)$ we have
\[
\PP(\Omega_1(\delta,R,Q))
= \PP\left( \sum_{i=1}^{\vert Q\vert} Y_i \geq \vert Q\vert (\EE (Y)+\delta)  \right)
\leq \PP\left( \sum_{i=1}^{\vert  Q\vert}\bar Y_i\geq \vert Q\vert \delta  \right).
\]
As the variables $\bar Y_i, i=1,\dots,\vert Q\vert$ are independent and fulfil conditions \eqref{bern_cond} this term can be estimated using Theorem \ref{theorem_bernstein}. Setting $\alpha=\delta\vert Q\vert $ we get
\[
 \PP(\Omega_1(\delta,R,Q))\leq \left\{\begin{array}{ll}\exp\left(-\frac{\delta^2\vert Q\vert^2}{4 |Q|}\right) &, 0\leq \delta\vert Q\vert \leq \frac{|Q|}{\tau}\\ \exp\left(-\frac{\delta \vert Q\vert }{4\tau}\right) &, \delta \vert Q\vert > \frac{|Q|}{\tau} \end{array}\right.  ,
\]
which gives the desired estimate.
\end{proof}

\section{Counting eigenvalues}
In this section we consider the Laplace operator $\Delta_\omega$ with respect to the random graph $\Gamma_\omega$ acting a domain $D_\omega\subset \ell^2(G)$. To define this operator in a appropriate sense we restrict ourselves from now on to the set $\Omega_{\mathrm{lf}} \subset\Omega$ with $\PP(\Omega_{\mathrm{lf}})=1$ where $\Gamma_\omega$ is a locally finite graph for all $\omega \in  \Omega_{\mathrm{lf}}$, cf. Lemma \ref{lemma_locfin}.

We denote by $C_c(G)\subset \ell^2(G)$ the dense subset of functions $f: G\to \CC$ with finite support. On this space we define the operator $\tilde\Delta_\omega: C_c(G)\to \ell^2(G)$ by setting
\[
 \tilde\Delta_\omega f (x) := m_\omega(x) f(x)-\sum_{y: [x,y]\in E_\omega} f(y) = \sum_{y:[x,y]\in E_\omega}\left( f(x)-f(y)\right).
\]
It is known that this operator is essentially selfadjoint, cf. \cite{Jorgensen-08,Wojciechowski-09,Weber-10}. Thus there exists a domain $D_\omega$ such that $\Delta_\omega: D_\omega\to \ell^2(G)$ is the unique selfadjoint extension of $\tilde \Delta_\omega$. The operator $\Delta_\omega$ will be called the Laplace operator.
Similarly the Laplacian $\Delta_S:\ell^2(V_S)\to \ell^2(V_S)$ on a finite subgraph $S=(V_S,E_S)$ of the complete graph $\Gamma$ is given by
\[
\Delta_S f(x) = \sum_{y\in V_S :[x,y]\in E_S}\left( f(x)-f(y)\right).
\]
We denote the set of all finite subgraphs of the complete graph $\Gamma$ by $\cS$. The subset of $\cS$ consisting of all subgraphs with vertex set $Q\in \cF(G)$ is called $\cS(Q)$. 
For a subgraph $S=(V_S,E_S)$ of $\Gamma$ and $Q\subset V_S$ the induced subgraph of $S$ on $Q$ is denoted by $S[Q]$, i.e. $S[Q]$ is the graph on vertex set $Q$, where two vertices are adjacent in $S[Q]$ if and only if they are adjacent in $S$. Given a subgraph $S=(V_S,E_S)$ of $\Gamma$ and an element $x\in G$ the \emph{translation of $S$ by $x$} is the graph $Sx$ whose vertex set is $V_{Sx}=V_Sx=\{yx \in G \st y \in V_S\}$ and the edges are $E_{Sx}=\{[y,y']\in E \st [yx^{-1},y'x^{-1}]\in E_S\}$.

In order to define the restriction of the Laplacian on a subset $Q\subset G$, we introduce mappings $p_Q$ and $i_Q$ called projection and inclusion. The support of $u\in \ell^2(G)$ is the set of those $x\in G$, such that $u(x)\neq 0$. We identify $\ell^2(Q)=\{u:Q\rightarrow {\CC} \vert \sum_{x\in Q}\vert u(x)\vert^2 < \infty\}$ with the subspace of $\ell^2(G)$ consisting of all elements supported in $Q$. The map $p_Q:\ell^2(G)\rightarrow \ell^2(Q)$ is given by $u\mapsto p_Q(u)$, where $p_Q(u)(x)=u(x)$ for $x\in Q$. Similarly $i_Q:\ell^2(Q)\rightarrow \ell^2(G)$ is given by 
\[ i_Q(u)(x):=\left\{\begin{array}{ll} u(x) &\mbox{if $x\in Q$}\\ 0& \mbox{else} \end{array}\right. .\]
For given $\omega \in \Omega_{\mathrm{lf}}$ and $S=(V_S,E_S)\in\cS$ we will particularly be interested in restricted operators $p_Q\Delta_\omega i_Q:\ell^2(Q)\rightarrow \ell^2(Q)$ and $p_U i_{V_S} \Delta_S  p_{V_S} i_U :\ell^2(U)\rightarrow \ell^2(U)$, where $Q\subset G$ and $U\subset V_S $ are finite. For this we will use the notation 
\[
\Delta_\omega[Q]:=p_Q\Delta_\omega i_Q \quad \mbox{and}\quad \Delta_S[U]:=p_U i_{V_S} \Delta_S  p_{V_S} i_U .
\]
Note that these operators are symmetric matrices with real entries, hence their eigenvalues are a subset of the real axis. Particularly for given $\omega\in \Omega_{\mathrm{lf}}$, $R\in \NN_0$ and $Q\in \cF(G)$ we will be interested in the difference
\[
 D_\omega^R(Q):=\Delta_{\Gamma_\omega[Q]}[Q_R]-\Delta_\omega[Q_R].
\]

\begin{Definition} 
 Let ${\mathcal B}(\mathbb R)$ be the Banach space of right-continuous, bounded functions $f:\RR\rightarrow \RR$ equipped with supremum norm. For a selfadjoint operator $A$ on a finite dimensional Hilbert space $V$ we define its cumulative eigenvalue counting function $n(A)\in {\mathcal B}(\mathbb R)$ by setting
\[ n(A)(E):= \vert \{ i\in \mathbb N \ \vert\ \lambda_i\leq E \} \vert\]
for all $E \in \mathbb R$, where $\lambda_i, i=1,\dots,\dim V$ are the eigenvalues of $A$, counted according to their multiplicity.
\end{Definition}
The next two lemmata are stated for completeness reason. Their proofs are to be found for example in \cite{LenzS-06,LenzMV-08,LenzSV-10}.
\begin{Lemma}\label{lemma_rank}
 Let $A$ and $C$ be selfadjoint operators in a finite dimensional Hilbert space, then we have
\[\vert n(A)(E)-n(A+C)(E) \vert \leq \rank(C)\]
for all $E\in\mathbb R$.
\end{Lemma}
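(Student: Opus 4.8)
The statement to prove is Lemma~\ref{lemma_rank}: for selfadjoint operators $A$ and $C$ on a finite-dimensional Hilbert space, $\vert n(A)(E) - n(A+C)(E)\vert \leq \rank(C)$ for all $E \in \RR$.

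The plan is to use the min-max (Courant--Fischer) characterisation of eigenvalues together with the observation that $C$, having rank $r := \rank(C)$, vanishes on a subspace of codimension $r$. First I would recall that for a selfadjoint operator $A$ on a $d$-dimensional space, with eigenvalues $\lambda_1(A) \leq \dots \leq \lambda_d(A)$ counted with multiplicity, one has $n(A)(E) = \vert\{ i : \lambda_i(A) \leq E\}\vert$, so it suffices to compare the ordered eigenvalues of $A$ and $A+C$. The key inequality I would establish is the eigenvalue interlacing bound $\lambda_{i}(A) \leq \lambda_{i+r}(A+C)$ and $\lambda_{i+r}(A+C) \leq \lambda_{i+2r}(A)$ type estimate — more precisely, that $\lambda_i(A+C) \leq \lambda_{i+r}(A)$ whenever the index is in range, and symmetrically with the roles reversed (applying the same statement to $A+C$ and $-C$).

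To get $\lambda_i(A+C) \leq \lambda_{i+r}(A)$, I would use the min-max formula $\lambda_{i+r}(A) = \min_{\dim W = i+r} \max_{0 \neq v \in W} \langle Av, v\rangle / \langle v,v\rangle$. Given an optimal (or near-optimal) subspace $W$ of dimension $i+r$ for $A$, intersect it with $\ker C$, which has codimension at most $r$; the intersection $W' := W \cap \ker C$ has dimension at least $i$. On $W'$ we have $\langle (A+C)v, v\rangle = \langle Av, v\rangle$, so $\max_{0\neq v \in W'} \langle (A+C)v,v\rangle/\langle v,v\rangle \leq \max_{0 \neq v \in W}\langle Av,v\rangle/\langle v,v\rangle = \lambda_{i+r}(A)$. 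Since $\dim W' \geq i$, taking a subspace of dimension exactly $i$ inside $W'$ and applying min-max for $A+C$ gives $\lambda_i(A+C) \leq \lambda_{i+r}(A)$. By symmetry (replace $A$ by $A+C$ and $C$ by $-C$, noting $\rank(-C) = \rank(C)$) we also get $\lambda_i(A) \leq \lambda_{i+r}(A+C)$.

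Finally I would translate these eigenvalue inequalities into the counting-function bound. Fix $E \in \RR$ and suppose $n(A)(E) = k$, i.e. $\lambda_k(A) \leq E < \lambda_{k+1}(A)$ (with the obvious conventions at the ends of the spectrum). From $\lambda_{k+1-r}(A+C) \leq \lambda_{k+1}(A)$ we would want a lower control, and from $\lambda_{k+1}(A) > E$ combined with $\lambda_j(A) \leq \lambda_{j+r}(A+C)$ we get that $A+C$ has at most $k + r$ eigenvalues $\leq E$; symmetrically $A$ has at most $n(A+C)(E) + r$ eigenvalues $\leq E$. Hence $\vert n(A)(E) - n(A+C)(E)\vert \leq r = \rank(C)$. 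I do not expect a serious obstacle here; the only mild care needed is bookkeeping with the index ranges (ensuring indices stay between $1$ and $d$) and handling the degenerate cases where $E$ lies below the bottom or above the top of the spectrum, which are trivial. Since this lemma is quoted from \cite{LenzS-06,LenzMV-08,LenzSV-10} I would keep the argument brief, essentially just pointing to the min-max intersection argument above.
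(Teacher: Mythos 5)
Your min-max argument is correct: intersecting an optimal $(i+r)$-dimensional subspace with $\ker C$ (codimension $r$) gives $\lambda_i(A+C)\leq\lambda_{i+r}(A)$, the symmetric bound follows by swapping $A$ and $A+C$, and the translation into the counting functions is sound despite the slightly tangled bookkeeping sentence at the end. The paper itself does not prove the lemma but refers to \cite{LenzS-06,LenzMV-08,LenzSV-10}, where essentially this variational rank-perturbation argument is used, so your proposal matches the intended proof.
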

\begin{Lemma}\label{lemma_4}
 Let $V$ be a finite dimensional Hilbert space and $U$ a subspace of $V$. If $i:U\rightarrow V$ is the inclusion and $p:V\rightarrow U$ the orthogonal projection, we have
\[\vert n(A)(E)-n(pAi)(E)\vert\leq 4 \cdot \rank (1-i\circ p)\]
for all selfadjoint operators $A$ on $V$ and all energies $E\in \mathbb R$.
\end{Lemma}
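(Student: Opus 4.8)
The plan is to reduce the assertion to an application of Lemma~\ref{lemma_rank}. Write $q = 1 - i\circ p$ for the projection of $V$ onto the orthogonal complement of $U$ (viewed inside $V$), so that $i\circ p = 1 - q$ and $\rank(q) = \dim V - \dim U$. The quantity we want to compare, $n(pAi)$, lives on the smaller space $U$, so the first step is to lift everything to the fixed space $V$. To do this I would introduce the operator $B := (i\circ p)\,A\,(i\circ p)$ acting on all of $V$. This operator is selfadjoint on $V$, it vanishes on the orthogonal complement $\ker(i\circ p)$, and on $U$ it acts exactly as $i(pAi)p$; consequently the nonzero eigenvalues of $B$ (together with the zero eigenvalues coming from $U$, if any) are precisely the eigenvalues of $pAi$, while the remaining $\rank(q)$ eigenvalues of $B$ are zero. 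Therefore $n(B)$ and $n(pAi)$ differ by a shift that is constant on $(-\infty,0)$ and on $[0,\infty)$ and bounded in absolute value by $\rank(q)$; more precisely $|n(B)(E) - n(pAi)(E)| \le \rank(q)$ for all $E$, since adding or removing $\rank(q)$ zero eigenvalues changes the counting function by at most that amount.

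The second step is to compare $n(A)$ with $n(B)$ on the common space $V$ using Lemma~\ref{lemma_rank}. The point is that the difference $A - B$ has small rank: writing $B = (1-q)A(1-q) = A - qA - Aq + qAq$, we see $A - B = qA + Aq - qAq$, and every term here factors through the range of $q$, hence $\rank(A - B) \le \rank(qA) + \rank(Aq - qAq) \le 2\,\rank(q)$. Actually one can be slightly more careful: $A - B = qA + (1-q)Aq$, and both summands have range contained in a space of dimension at most $\rank(q)$ (the first has range in $\operatorname{ran} q$, the second has range orthogonal to... ) — in any case $\rank(A-B)\le 2\rank(q)$ suffices. Lemma~\ref{lemma_rank} then yields $|n(A)(E) - n(B)(E)| \le 2\,\rank(q)$ for all $E\in\RR$.

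Combining the two estimates via the triangle inequality gives
\[
 |n(A)(E) - n(pAi)(E)| \le |n(A)(E) - n(B)(E)| + |n(B)(E) - n(pAi)(E)| \le 2\,\rank(q) + \rank(q) = 3\,\rank(q),
\]
which is even a little sharper than the claimed constant $4$; the constant $4$ in the statement leaves room for a less optimized bookkeeping of the rank contributions (e.g.\ estimating $\rank(A-B)\le 3\rank(q)$ crudely). The only genuinely delicate point — and the one I would spell out most carefully — is the first step: verifying that the counting function of $pAi$ on $U$ and the counting function of its zero-padded extension $B$ on $V$ differ by no more than $\rank(q)$ uniformly in $E$. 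This is where one must be careful about how zero eigenvalues are distributed relative to the energy $E$ (the shift is by $0$ for $E<0$ and by exactly $\rank(q)$ for $E\ge 0$, once one accounts for any zero eigenvalues already present in $pAi$), but it is elementary. Everything else is a rank count plus Lemma~\ref{lemma_rank}.
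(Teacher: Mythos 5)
Your proof is correct, and since the paper does not prove Lemma \ref{lemma_4} itself but refers to \cite{LenzS-06,LenzMV-08,LenzSV-10}, your argument --- comparing $n(A)$ with $n(B)$, $B=(i\circ p)A(i\circ p)$, via Lemma \ref{lemma_rank} and then discarding the $\rank(1-i\circ p)$ extra zero eigenvalues of $B$ --- is essentially the standard proof found in those references, and it even yields the sharper constant $3$ in place of $4$ (the $4$ there comes from a cruder splitting of $A-B$ into three rank-$\leq\rank(q)$ pieces). The only point to tidy up is the half-finished justification that the second summand $(1-q)Aq$ has rank at most $\rank(q)$: the reason is simply that it factors through $\operatorname{ran}(q)$, i.e.\ the rank of a product is bounded by the rank of each factor, not any statement about where its range lies; with that noted, your bound $\rank(A-B)\leq 2\rank(q)$ and hence the whole argument stand.
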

For given $Q\in \cF(G)$, $R\in\NN_0$, $\omega \in \Omega_{\mathrm{lf}}$ and $S=(V_S,E_S)\in \cS$ we define $F_\omega^R,F_\omega: \cF(G)\to \cB(\RR)$ by 
\begin{equation}\label{def_F}
F_\omega^R(Q):=n(\Delta_\omega[Q_R])\hspace{1cm}\mbox{and}\hspace{1cm} F_\omega(Q):=F_\omega^0 (Q)= n(\Delta_\omega[Q]) .
\end{equation}
as well as $\tilde F^R,\tilde F: \cS\to \cB(\RR)$ by
\begin{equation}\label{def_tilde_F}
 \tilde F^R(S):=n(\Delta_S[ (V_S)_R])\hspace{1cm}\mbox{and}\hspace{1cm} \tilde F(S):=\tilde F^0 (S)= n(\Delta_S) .
\end{equation}

\begin{Lemma}\label{F_bounded}
Let $R\in\NN_0$, $\omega\in \Omega_{\mathrm{lf}}$ and the functions $F^R_\omega:\cF(G)\to\cB(\RR)$ and $\tilde F^R:\cS\to \cB(\RR)$ be given as above. Then the following holds true:
\begin{itemize}
 \item[(i)] the functions $F_\omega^R$ and $\tilde F^R$ are linearly bounded, in fact
\begin{equation*}
 \Vert F_\omega^R(Q) \Vert \leq \vert Q\vert \hspace{1cm}\mbox{and}\hspace{1cm} \Vert \tilde F^R( S)\Vert \leq \vert V_S \vert
\end{equation*}
\item[(ii)] the function $\tilde F^R$ is invariant under translation, i.e. for any $S\in\cS$ and $x\in G$ we have $$\tilde F^R(S)=\tilde F^R(Sx).$$
\end{itemize}
\end{Lemma}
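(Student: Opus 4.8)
The plan is to obtain (i) straight from the definition of $n(\cdot)$, and to reduce (ii) to the right-invariance of the word metric together with the fact that right translation by $x$ implements a unitary conjugation carrying $\Delta_S$ to $\Delta_{Sx}$.

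For (i) I would note that for any selfadjoint operator $A$ on a finite dimensional Hilbert space $V$ the function $n(A)$ takes values in $\{0,1,\dots,\dim V\}$, because the index $i$ in the definition runs only over $1,\dots,\dim V$; hence $\Vert n(A)\Vert\le \dim V$. Applying this to $A=\Delta_\omega[Q_R]$ acting on $\ell^2(Q_R)$ gives $\Vert F_\omega^R(Q)\Vert\le |Q_R|\le|Q|$, and to $A=\Delta_S[(V_S)_R]$ acting on $\ell^2((V_S)_R)$ gives $\Vert \tilde F^R(S)\Vert\le |(V_S)_R|\le |V_S|$. This settles (i).

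For (ii) the first step is to record that $d$ is invariant under right translation: $d(g,h)$ depends only on $gh^{-1}=(gz)(hz)^{-1}$, so $d(gz,hz)=d(g,h)$ for all $z\in G$. This immediately yields $\partial^R(Q)x=\partial^R(Qx)$ and therefore $Q_R\,x=(Qx)_R$ for every $Q\in\cF(G)$ and $x\in G$; in particular $(V_S)_R\,x=(V_{Sx})_R$. The second step introduces the right translation operator $\Theta_x:\ell^2(G)\to\ell^2(G)$, $(\Theta_x f)(g)=f(gx^{-1})$, which is unitary and maps $\ell^2(U)$ onto $\ell^2(Ux)$ for every $U\subset G$. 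Using the description $E_{Sx}=\{[y,y']\in E\st [yx^{-1},y'x^{-1}]\in E_S\}$, a direct substitution shows $\Theta_x(i_{V_S}\Delta_S p_{V_S})\Theta_x^{-1}=i_{V_{Sx}}\Delta_{Sx}p_{V_{Sx}}$. Combining this with the evident intertwining relations $p_{Ux}\Theta_x=(\Theta_x|_U)p_U$ and $\Theta_x^{-1}i_{Ux}=i_U(\Theta_x|_U)^{-1}$, where $\Theta_x|_U:\ell^2(U)\to\ell^2(Ux)$ is the (unitary) restriction, one obtains
\[
\Delta_{Sx}[Ux]=(\Theta_x|_U)\,\Delta_S[U]\,(\Theta_x|_U)^{-1}.
\]
Since unitarily conjugate selfadjoint operators have the same list of eigenvalues counted with multiplicity, $n(\Delta_{Sx}[Ux])=n(\Delta_S[U])$; taking $U=(V_S)_R$ and using $Ux=(V_{Sx})_R$ from the first step gives $\tilde F^R(Sx)=\tilde F^R(S)$.

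The only part that is not purely mechanical is the bookkeeping in the second step: one must check that each of the maps $p_U$, $i_{V_S}$, $\Delta_S$, $p_{V_S}$, $i_U$ occurring in $\Delta_S[U]=p_U i_{V_S}\Delta_S p_{V_S}i_U$ is intertwined by the appropriate restriction of $\Theta_x$, and that the index sets transform as $U\subset V_S\mapsto Ux\subset V_{Sx}$. I expect (i) and the metric-invariance step to be immediate, and the conjugation identity $\Theta_x(i_{V_S}\Delta_S p_{V_S})\Theta_x^{-1}=i_{V_{Sx}}\Delta_{Sx}p_{V_{Sx}}$, while the most computational ingredient, to follow from a one-line substitution in the definitions of $\Delta_S$ and $E_{Sx}$.
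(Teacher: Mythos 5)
Your proof is correct and is exactly the straightforward verification the paper has in mind when it says the lemma "follows easily from the definition": part (i) from $n(A)$ being bounded by the dimension of the underlying space, and part (ii) from the right-invariance of the word metric (so $(V_{Sx})_R=(V_S)_R\,x$) together with unitary conjugation by right translation, which carries $\Delta_S[(V_S)_R]$ to $\Delta_{Sx}[(V_{Sx})_R]$ and hence preserves the eigenvalue counting function. No gaps to report.
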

\begin{proof}
 This follows easily from the definition.
\end{proof}

The next results are devoted to prove further properties of these functions for $R\geq R_0$ with $R_0$ from Lemma \ref{lemma_berstein_ineq}. We will not be able to prove these properties for all $\omega\in \Omega_{\mathrm{lf}}$ but only for all $\omega\in \tilde\Omega$ where 
\begin{equation}\label{def_tilde_Omega}
 \tilde \Omega := \tilde \Omega(\delta,R,Q) :=\Omega_2(\delta,R,Q) \cap\Omega_{\mathrm{lf}} \quad\mbox{and}\quad \Omega_2(\delta,R,Q) \mbox{ given as in (\ref{def_Omega_1})}
\end{equation}
By Corollary \ref{corollary_bernstein} we have $\PP(\tilde \Omega)\geq 1-\exp(-\delta^2 |Q|/4)$ for $\delta\leq\tau^{-1}$.
The function $F_\omega^R:\mathcal F(G)\rightarrow \mathcal B(\RR)$, $Q\mapsto F_\omega^R(Q)$ satisfies a weak form of additivity, described in the next 
\begin{Lemma}\label{lemma_add}
 Let $Q\in\mathcal F(G)$, $R\geq R_0$ and $\delta>0$ be given and set $\tilde\Omega=\tilde\Omega(\delta,R,Q)$ as in (\ref{def_tilde_Omega}) and $\epsilon=\epsilon(R)=\sum_{y\in G\setminus B_R} p(y)$ as in (\ref{def_epsilon(R)}). Then for any disjoint sets $Q_i, i=1,\dots k$ with $Q=\bigcup_i Q_i$ the inequality
\[
 \left\Vert F_\omega^R(Q)-\sum_{i=1}^k F_\omega^R(Q_i) \right\Vert 
 \leq 4 \vert Q\vert (\epsilon +\delta) + 4\sum_{i=1}^k \vert \partial^R(Q_i)\vert
\]
holds for all $\omega\in \tilde\Omega$. Here $R_0$ is the constant given in Lemma \ref{lemma_berstein_ineq}.
\end{Lemma}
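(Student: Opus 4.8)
The plan is to reduce the additivity estimate for $F_\omega^R$ to two effects: a deterministic ``rank'' contribution coming from edges of length at most $R$ which cross the boundaries $\partial^R(Q_i)$, and a probabilistic contribution coming from the long edges, whose number we control on $\tilde\Omega$ by the very definition of $\Omega_2(\delta,R,Q)$. First I would compare the operator $\Delta_\omega[Q_R]$ with the direct sum $\bigoplus_{i=1}^k \Delta_\omega[Q_{i,R}]$, where $Q_{i,R}=Q_i\setminus\partial^R(Q_i)$. The point is that on the ``deep interior'' vertices $Q_{i,R}$, the matrix element of $\Delta_\omega$ between a vertex $x\in Q_{i,R}$ and a vertex $y\notin Q_i$ can only be nonzero if either $d(x,y)\le R$ (impossible, since $x$ is at distance $>R$ from $G\setminus Q_i$) or the edge $[x,y]$ has length $>R$. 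Hence the difference between $\Delta_\omega[\bigcup_i Q_{i,R}]$ and $\bigoplus_i\Delta_\omega[Q_{i,R}]$ is an operator supported on the long edges incident to $\bigcup_i Q_{i,R}\subseteq Q$, and its rank is at most twice the number of such edges. On $\omega\in\tilde\Omega$ this number is at most $|Q|(\epsilon+\delta)$ by \eqref{def_Omega_1}, so Lemma \ref{lemma_rank} gives a bound $\le 2|Q|(\epsilon+\delta)$ for the corresponding change in $n(\cdot)$.

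Next I would handle the passage between $Q_R$ and the union $\bigcup_i Q_{i,R}$, and between $Q_i$ and $Q_{i,R}$. For this the natural tool is Lemma \ref{lemma_4}: restricting a Laplacian from a space $\ell^2(V)$ to a subspace $\ell^2(U)$ changes the eigenvalue counting function by at most $4\,\rank(1-i\circ p)=4|V\setminus U|$. Applying this with $V=Q_i$, $U=Q_{i,R}$ shows $\|F_\omega^R(Q_i)-n(\Delta_\omega[Q_{i,R}])\|\le 4|\partial^R(Q_i)|$ — wait, more carefully, $F_\omega^R(Q_i)=n(\Delta_\omega[Q_{i,R}])$ already by definition \eqref{def_F}, so the $Q_i$ side needs no work. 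The work is on the other side: $Q_R$ versus $\bigcup_i Q_{i,R}$. Since $\bigcup_i Q_{i,R}\subseteq Q$ but need not contain $Q_R$ nor be contained in it, I would compare both $\Delta_\omega[Q_R]$ and $\bigoplus_i\Delta_\omega[Q_{i,R}]$ to $\Delta_\omega[\bigcup_i Q_{i,R}]$ via Lemma \ref{lemma_4}, paying $4\,|Q_R\,\triangle\,\bigcup_i Q_{i,R}|$ or, more crudely, $4|Q\setminus\bigcup_i Q_{i,R}|\le 4\sum_i|\partial^R(Q_i)|$ for the difference in the restriction, plus the already-discussed rank-$2$ long-edge term. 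Collecting the two $4$-weighted boundary terms and the long-edge term, and being slightly generous with constants, yields exactly $4|Q|(\epsilon+\delta)+4\sum_i|\partial^R(Q_i)|$.

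The main obstacle is bookkeeping the three distinct sets $Q_R$, $Q$, and $\bigcup_i Q_{i,R}$ and making sure the long-edge rank estimate is applied on the right operator so that the event $\Omega_1(\delta,R,Q)$ — which counts long edges incident to \emph{any} vertex of $Q$, cf.\ \eqref{def_Omega_1} — actually dominates the number of long edges whose removal decouples the blocks. Here one uses that every long edge relevant to the decoupling has at least one endpoint in $\bigcup_i Q_{i,R}\subseteq Q$, so it is counted by $\sum_{i=1}^{|Q|}Y_i$ with $Y_i$ from \eqref{def_Y_i} (the $M_i^R$ construction is designed precisely so no such edge is double-counted), and on $\tilde\Omega=\Omega_2(\delta,R,Q)\cap\Omega_{\mathrm{lf}}$ this sum is $<|Q|(\epsilon+\delta)$. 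A secondary subtlety is that $\Delta_\omega[Q_R]$ uses matrix entries of the \emph{infinite-volume} operator $\Delta_\omega$, whereas $\Delta_{\Gamma_\omega[Q]}[\cdot]$ would drop edges leaving $Q$; but since we only ever restrict $\Delta_\omega$ itself and never pass to $\Gamma_\omega[Q]$ in this lemma, this does not arise, and the quantity $D_\omega^R(Q)$ defined just before the Definition plays no role here. Once the set-theoretic inclusions are laid out, the estimate follows by assembling Lemmata \ref{lemma_rank} and \ref{lemma_4} with the Bernstein-type bound of Corollary \ref{corollary_bernstein} controlling $\PP(\tilde\Omega)$.
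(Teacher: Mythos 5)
Your proposal is correct and follows essentially the paper's own route: decouple the blocks $Q_{i,R}$ by treating the long-range couplings as a finite-rank perturbation whose size is controlled on $\Omega_2(\delta,R,Q)$ via Lemma \ref{lemma_rank}, then pass from $Q_R$ to $\bigcup_i Q_{i,R}$ via Lemma \ref{lemma_4}; the paper merely removes \emph{all} long-edge entries (its operator $\Delta_\omega^L$), paying $4\vert Q\vert(\epsilon+\delta)$, where you remove only the inter-block ones and pay $2\vert Q\vert(\epsilon+\delta)$, which is even within the stated bound. Your one hedge is unnecessary: since $G\setminus Q\subseteq G\setminus Q_i$ one always has $Q_{i,R}\subseteq Q_R$, hence $\bigcup_i Q_{i,R}\subseteq Q_R$, and a single application of Lemma \ref{lemma_4} with $\rank(1-i\circ p)=\vert Q_R\setminus\bigcup_i Q_{i,R}\vert\le\sum_{i=1}^k\vert\partial^R(Q_i)\vert$ suffices.
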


\begin{proof}
 Let $\omega\in \tilde\Omega$ and disjoint sets $Q_i, i=1,\dots k$ with $Q=\bigcup_i Q_i$ be given. During the proof we will call the edges of length longer than $R$ the \emph{long edges}. For given $U\in \cF(G)$ we define an operator $L_\omega[U]:\ell^2(U)\to\ell^2(U)$ which does only respect the long edges by
\[
(L_\omega[U]f)(x)= -\!\!\! \sum_{\ato{y\in U :[x,y]\in E_\omega}{d(x,y)>R}} \!\!\! f(y)
\]
and use the notation
\[
 \Delta_\omega^L[U]:=\Delta_\omega [U] -L_\omega[U].
\]
As $\omega$ is an element of $\Omega_2(\delta,R,Q)$, the number of long edges in $\Gamma_\omega$ which are incident to a vertex  in $Q$ is less than $\vert Q\vert(\epsilon +\delta)$.
 Hence the matrices $L_\omega[Q]$ and $L_\omega[Q_R]$ contain not more than $2\vert Q\vert(\epsilon +\delta)$ non-zero elements and we get
\[\rank(L_\omega[Q])\leq 2\vert Q\vert(\epsilon +\delta)\hspace{0.5cm}\mbox{and}\hspace{0.5cm}\rank(L_\omega[Q_R])\leq 2\vert Q\vert(\epsilon +\delta).\]
This combined with Lemma \ref{lemma_rank} gives
\begin{equation}\label{lemma_add1}
\Vert n(\Delta_\omega[Q_R])-n(\Delta_\omega^L[Q_R]) \Vert
\leq 
\rank(L_\omega[Q_R]) 
\leq 
2\vert Q\vert(\epsilon +\delta) 
\end{equation}
which immediately implies
\begin{equation*}
 \left\Vert n(\Delta_\omega[Q_R])-\sum_{i=1}^k n(\Delta_\omega[Q_{i,R}]) \right\Vert 
\leq  
2\vert Q\vert(\epsilon +\delta) + \left\Vert n(\Delta_\omega^L[Q_R])-\sum_{i=1}^k n(\Delta_\omega[Q_{i,R}]) \right\Vert .
\end{equation*}
 Here the last term can be estimated by
\begin{eqnarray*}
 \left\Vert n(\Delta_\omega^L[Q_R])-\sum_{i=1}^k n(\Delta_\omega[Q_{i,R}]) \right\Vert 
&\leq&    
\left\Vert n(\Delta_\omega^L[Q_R])-\sum_{i=1}^k n(\Delta_\omega^L[Q_{i,R}]) \right\Vert\\
&&+\left\Vert \sum_{i=1}^k \left( n(\Delta_\omega^L[Q_{i,R}])- n(\Delta_\omega[Q_{i,R}])\right) \right\Vert.
\end{eqnarray*}
We apply again Lemma \ref{lemma_rank} and the fact that $\sum_i \rank(L_\omega[Q_{i,R}])$ is bounded by the number of non-zero elements in $L_\omega[Q]$ as well. This proves the inequality 
\begin{equation}\label{lemma_add2}
 \left\Vert n(\Delta_\omega[Q_R])-\sum_{i=1}^k n(\Delta_\omega[Q_{i,R}]) \right\Vert 
\leq 
4\vert Q \vert(\epsilon +\delta) + 
\left\Vert n(\Delta_\omega^L[Q_R])-\sum_{i=1}^k n(\Delta_\omega^L[Q_{i,R}]) \right\Vert .
\end{equation}
 Now we use a decoupling argument. By definition of $\Delta_\omega^L[\cdot]$ and $L_\omega[\cdot]$ we get
\[
\Delta_\omega^L\left[\bigcup_{i=1}^k Q_{i,R}\right]
=
\bigoplus\limits_{i=1}^k \left(\Delta_\omega^L[Q_{i,R}]\right).
\]
Therefore we can count the eigenvalues of $\Delta_\omega^L[Q_{i,R}]$ for $i=1,\dots, k$ separately
\[
n\left(\Delta_\omega^L\left[\bigcup_{i=1}^k Q_{i,R}\right]\right)
=
\sum\limits_{i=1}^k  n\left( \Delta_\omega^L[Q_{i,R}] \right).
\]
Now we apply Proposition \ref{lemma_4} with $V=\ell^2(Q_R)$ and $U=\ell^2(\bigcup_{i=1}^k Q_{i,R})$. Hence we get
\[
 \left\Vert n(\Delta_\omega^L[Q_R])-\sum_{i=1}^k n(\Delta_\omega^L[Q_{i,R}]) \right\Vert
 =
 \left\Vert n(\Delta_\omega^L[Q_R])-n\left(\Delta_\omega^L\left[\bigcup_{i=1}^k Q_{i,R}\right]\right) \right\Vert
 \leq
 4\sum_{i=1}^k \vert \partial^R Q_{i}\vert
\]
This together with (\ref{lemma_add2}) finishes the proof.
\end{proof}
The next lemma shows that the functions $F_\omega^R$ and $\tilde F^R$ act similarly with high probability.

\begin{Lemma}\label{lemma_F-tildeF}
 Let $Q\in \cF(G) $, $R\geq R_0$ and $\delta>0$ be given and set $\tilde\Omega=\tilde\Omega(\delta,R,Q)$ as in (\ref{def_tilde_Omega}) and $\epsilon=\epsilon(R)=\sum_{y\in G\setminus B_R} p(y)$ as in (\ref{def_epsilon(R)}). Then for any
choice of disjoint sets $Q_i\subset Q$, $i=1,\dots,k$
 \[
  \sum_{i=1}^k \left\Vert F_\omega^R(Q_i)- \tilde F^R (\Gamma_\omega[{Q_i}]) \right\Vert \leq \vert Q\vert (\epsilon +\delta)
 \]
holds for all $\omega\in\tilde\Omega$. Here $R_0$ is the constant given in Lemma \ref{lemma_berstein_ineq}.
\end{Lemma}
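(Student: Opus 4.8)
The plan is to bound each summand by the rank of an explicit perturbation and then to control the total rank by the number of long edges incident to $Q$, which on $\Omega_2(\delta,R,Q)$ is at most $|Q|(\epsilon+\delta)$ by the very definition of that set.

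First I would rewrite the two functions in a common form. Since the vertex set of $\Gamma_\omega[Q_i]$ is $Q_i$ and $\partial^R$ depends only on the underlying subset of $G$, one has $(V_{\Gamma_\omega[Q_i]})_R = Q_i\setminus\partial^R(Q_i)=Q_{i,R}$, so by \eqref{def_F} and \eqref{def_tilde_F}
\[
F_\omega^R(Q_i)=n\bigl(\Delta_\omega[Q_{i,R}]\bigr),\qquad \tilde F^R\bigl(\Gamma_\omega[Q_i]\bigr)=n\bigl(\Delta_{\Gamma_\omega[Q_i]}[Q_{i,R}]\bigr).
\]
Applying Lemma \ref{lemma_rank} with $A=\Delta_\omega[Q_{i,R}]$ and $A+C=\Delta_{\Gamma_\omega[Q_i]}[Q_{i,R}]$, i.e. $C=D_\omega^R(Q_i)$, gives $\bigl\Vert F_\omega^R(Q_i)-\tilde F^R(\Gamma_\omega[Q_i])\bigr\Vert\le\rank\bigl(D_\omega^R(Q_i)\bigr)$, so it suffices to prove $\sum_{i=1}^k\rank(D_\omega^R(Q_i))\le|Q|(\epsilon+\delta)$ for $\omega\in\tilde\Omega$.

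Next I would identify $D_\omega^R(Q_i)$ explicitly. Because $Q_{i,R}\subset Q_i$, both $\Delta_\omega[Q_{i,R}]$ and $\Delta_{\Gamma_\omega[Q_i]}[Q_{i,R}]$ are the compression to $\ell^2(Q_{i,R})$ of $\Delta_\omega[Q_i]$ and of $\Delta_{\Gamma_\omega[Q_i]}$ respectively (operators on $\ell^2(Q_i)$). These two operators on $\ell^2(Q_i)$ share the same off‑diagonal part, namely the adjacency terms $-X_{[x,y]}$ with $x,y\in Q_i$, and differ only on the diagonal, where at $x\in Q_i$ the difference equals $m_{\Gamma_\omega[Q_i]}(x)-m_\omega(x)=-\#\{y\in G\setminus Q_i:[x,y]\in E_\omega\}$. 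Hence $D_\omega^R(Q_i)$ is a diagonal operator on $\ell^2(Q_{i,R})$ whose entry at $x$ counts, up to sign, the edges joining $x$ to $G\setminus Q_i$. The decisive point is that for $x\in Q_{i,R}=Q_i\setminus\partial^R(Q_i)$ one has $d(x,G\setminus Q_i)>R$, so \emph{every} such edge has length $>R$, i.e.\ is a long edge; therefore
\[
\rank\bigl(D_\omega^R(Q_i)\bigr)\le\#\bigl\{x\in Q_{i,R}\ \st\ x\text{ is incident to a long edge leaving }Q_i\bigr\}.
\]

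Finally I would sum over $i$. As the sets $Q_i$ are pairwise disjoint, so are the $Q_{i,R}$; each vertex counted for index $i$ is then an endpoint of a long edge incident to $Q$, and a short bookkeeping argument using this disjointness shows that the resulting total does not exceed the number of long edges of $\Gamma_\omega$ incident to at least one vertex of $Q$. By the construction of $Y_1,\dots,Y_{|Q|}$ in \eqref{def_Y_i} this number is exactly $\sum_{j=1}^{|Q|}Y_j(\omega)$, and for $\omega\in\tilde\Omega\subset\Omega_2(\delta,R,Q)$ the definition \eqref{def_Omega_1} gives $\sum_{j=1}^{|Q|}Y_j(\omega)<|Q|(\epsilon+\delta)$. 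Chaining the three displays yields the asserted bound. I expect the middle step to be the real obstacle: one must verify carefully that $D_\omega^R(Q_i)$ is purely diagonal and that on the inner region $Q_{i,R}$ its support detects only long edges — this is precisely where the definition $Q_{i,R}=Q_i\setminus\partial^R(Q_i)$ is exploited — while the concluding count is elementary but does rely on the disjointness of the $Q_i$ to avoid overcounting.
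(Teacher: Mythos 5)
Your proposal follows essentially the same route as the paper's proof: rewrite both functions as eigenvalue counting functions on $\ell^2(Q_{i,R})$, apply Lemma \ref{lemma_rank} with the perturbation $C=D_\omega^R(Q_i)$, observe that $D_\omega^R(Q_i)$ is diagonal and, on $Q_{i,R}=Q_i\setminus\partial^R(Q_i)$, registers only edges of length $>R$, and then bound the total on $\Omega_2(\delta,R,Q)$ by the number of long edges incident to $Q$, i.e.\ by $\sum_i Y_i<\vert Q\vert(\epsilon+\delta)$. The one place you flag as delicate --- the final bookkeeping, where a long edge running between two different sets $Q_{i,R}$ and $Q_{j,R}$ is seen from both of its endpoints --- is treated exactly as loosely in the paper (which passes from $\sum_i\Tr(D_\omega^R(Q_i))$ to $\sum_{x\in Q_R}D_\omega^R(Q)(x,x)$ without comment), so your argument matches the paper's proof, including this point, which in any case only affects the constant by a factor of two and is harmless for the later applications.
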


\begin{proof}
Let $\omega\in\tilde\Omega$ and disjoint subsets $Q_i$, $i=1,\dots,k$ of $Q$ be given. By definition of $\tilde F^R$, $F_\omega^R$ and $D_\omega^R(\cdot)$ 
\begin{eqnarray*}
 \sum_{i=1}^k \left\Vert F_\omega^R(Q_i)- \tilde F^R (\Gamma_\omega [{Q_i}]) \right\Vert 
 &=& 
 \sum_{i=1}^k \left\Vert n(\Delta_\omega[Q_{i,R}])- n(\Delta_{\Gamma_\omega [{Q_i}]}[ Q_{i,R}])  \right\Vert \\
 &=& 
 \sum_{i=1}^k \left\Vert n(\Delta_\omega[Q_{i,R}])- n(\Delta_\omega[Q_{i,R}]+ D_\omega^R(Q_i))  \right\Vert
\end{eqnarray*}
holds. Lemma \ref{lemma_rank} yields that 
\begin{equation*}
\sum_{i=1}^k \left\Vert n(\Delta_\omega[Q_{i,R}])- n(\Delta_\omega[Q_{i,R}]+ D_\omega^R(Q_i))  \right\Vert
\leq
\sum_{i=1}^k \rank (D_\omega^R(Q_i))
\end{equation*}
Note that $D_\omega^R(Q):\ell^2(Q_{R})\to \ell^2(Q_{R})$ is a diagonal matrix where the entry at $(x,x)$ denotes the number of edges in $\Gamma_\omega$ from $x\in Q_{R}$ to $G\setminus Q$. The sum of these entries is bounded from above by the number of all edges of length longer than $R$ which are incident to some $x\in Q_R$. Therefore we get
 \[
  \sum_{i=1}^k \rank (D_\omega^R(Q_i))
\leq 
\sum_{i=1}^k \Tr (D_\omega^R(Q_i)) 
=
\sum_{i=1}^k \sum_{x\in Q_{i,R}}  D_\omega^R(Q_i)(x,x)
\leq
\sum_{x\in Q_{R}}  D_\omega^R(Q)(x,x)
 \]
As $\omega$ is an element of $\Omega_2$, the right hand side is not larger than $\vert Q\vert (\epsilon+\delta)$.
\end{proof}

\section{Uniform approximation}
At the beginning of this section we introduce some notation concerning frequencies of finite subgraphs in infinite graphs. 
For two graphs $S,S'\in \cS$ the number of occurrences of translations of the graph $S$ in $S'$ is denoted by 
\[
\sharp_S(S'):=\vert \{x\in G \st V_S x\subset V_{S'},\ S'[V_Sx]= Sx \}\vert.
\]
Counting occurrences of graphs along a F\o{}lner sequence $(U_j)_{j\in \mathbb N}$ leads to the definition of frequencies. 
Let $S\in\cS$, $(U_j)_{j\in \mathbb N}$ be a F\o{}lner sequence and let $\Gamma'=(V,E')$ be a subgraph of $\Gamma$ on the full vertex set $V$. If the limit
\[
\nu_S(\Gamma'):=\lim\limits_{j\rightarrow \infty}\frac{\sharp_S(\Gamma'[U_j])}{\vert U_j \vert}
\]
exists we call $\nu_S(\Gamma')$ the \textit{frequency of $S$ in the graph $\Gamma'$ along $(U_j)_{j\in \mathbb N}$}.
Similarly frequencies can be defined for subgraphs which are not (or sparsly) connected to the rest of the graph. Given $R\in\NN$ and a graph $\Gamma'=(V,E')$ on the full vertex set $V$, we say that a graph $S=(V_S,E_S)$ is \emph{$R$-isolated in $\Gamma'$} if $\Gamma[V_S]=S$ and $[g,h]\notin E' $ for all $g\in V_S$, $h\in G\setminus V_S$ satisfying $d(g,h)\geq R$.  Therefore a $1$-isolated graph $S$ has no edge connecting it with the rest of the graph. For a given graph $S=(V_S,E_S)\in\cS$, a set $Q\in\cF(G)$, $R\in \NN$ and $\Gamma'$ as above we write
\[
\sharp_{S,R}(\Gamma',Q):=\left| \{x\in G \st V_S x\subset Q \mbox{ and } Sx \mbox{ is $R$-isolated in } \Gamma'\}\right|
\]
for the number of occurrences of $R$-isolated copies of $S$ in $Q$. The frequency of an $R$-isolated graph $S$ along a F\o lner sequence $(U_j)$ in $\Gamma'$ is defined by
\[
 \nu_{S,R}(\Gamma'):= \lim\limits_{j\rightarrow \infty}\frac{\sharp_{S,R}(\Gamma', U_j)}{\vert U_j \vert},
\]
if the limit exists. 
In the following the graph $\Gamma'$ will always be given by percolation graph $\Gamma_\omega$, $\omega\in\Omega$. However Lemma \ref{lemma_nu} will show that the frequencies $\nu_S(\Gamma_\omega)$ will coincide for almost all $\omega \in \Omega$. The same will hold true for the frequencies $\nu_{S,R}(\Gamma_\omega)$.
 
We define the action $T$ of $G$ on $(\Omega,\cA,\PP)$ by
\begin{equation}\label{def_T}
T: G \times \Omega\rightarrow \Omega,\hspace{2cm} (g,\omega) \mapsto T_g(\omega):=\omega g^{-1} 
\end{equation}
where $\omega g^{-1}\in\Omega$ is given pointwise by
\[
 \omega g^{-1}([x,y])= \omega([xg,yg])\quad \mbox{for all } x,y\in G.
\]
Note that $T$ is an ergodic and measure preserving left-action on $(\Omega,\cA,\PP)$.

\begin{Lemma}\label{lemma_nu}
 Given $R\in \NN$ and a tempered F\o lner sequence $(Q_n)$, there exists a set $\Omega_{\mathrm{fr}} \subset\Omega$ of full measure such that the frequencies $\nu_S(\Gamma_\omega)$ and $\nu_{S,R}(\Gamma_\omega)$ along $(Q_n)$ exist for all $S=(V_S,E_S)\in \cS$ and all $\omega\in \Omega_{\mathrm{fr}}$, in particular
\begin{align*}
 \nu_S &:= \nu_S(\Gamma_\omega) = \prod_{[x,y]\in E_S} p(xy^{-1}) \cdot \prod_{\ato{[x,y]\notin E_S}{x,y\in V_S}} (1-p(xy^{-1}))\\
 \nu_{S,R} &:= \nu_{S,R}(\Gamma_\omega) = \prod_{[x,y]\in E_S} p(xy^{-1}) \cdot \prod_{\ato{[x,y]\notin E_S}{x,y\in V_S}} (1-p(xy^{-1})) \cdot \prod_{\ato{[x,y]\in E, x\in V_S,}{y\notin  V_S, d(x,y)\geq R }} (1-p(xy^{-1}))
\end{align*}
holds.
\end{Lemma}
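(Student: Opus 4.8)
The plan is to realise both $\sharp_S(\Gamma_\omega[Q_n])/|Q_n|$ and $\sharp_{S,R}(\Gamma_\omega,Q_n)/|Q_n|$ as Birkhoff averages of fixed integrable functions on $\Omega$ and then to quote the pointwise ergodic theorem for amenable groups along tempered F\o{}lner sequences \cite{Lindenstrauss-01}. First observe that $\cS$ is countable: $G$ is countable, hence $\cF(G)$ is countable, and each $\cS(Q)$ is finite. So it suffices to produce, for a single fixed $S=(V_S,E_S)\in\cS$, a set $\Omega_S\subset\Omega$ of full measure on which the two limits exist and take the asserted values, and then put $\Omega_{\mathrm{fr}}:=\bigcap_{S\in\cS}\Omega_S$.

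Fix $S$. I would introduce $f_S\colon\Omega\to\{0,1\}$, the indicator of the event $\{\omega([g,h])=\1\{[g,h]\in E_S\}\ \text{for all }g,h\in V_S\}$, which depends only on the finitely many coordinates indexed by pairs in $V_S$; and $f_{S,R}\colon\Omega\to\{0,1\}$, the indicator of the intersection of that event with $\{\omega([g,h])=0\ \text{for all }g\in V_S,\ h\in G\setminus V_S\ \text{with }d(g,h)\geq R\}$. The second event involves countably many further coordinates, disjoint from the first batch, so $f_{S,R}$ is still measurable and bounded, hence lies in $L^1(\Omega)$. Using independence of the coordinates of $\omega$ — and $p\in\ell^1(G)$, so that $\sum_{g\in V_S}\sum_{h\in G}p(gh^{-1})<\infty$ and the relevant infinite product converges — one reads off directly $\EE(f_S)=\nu_S$ and $\EE(f_{S,R})=\nu_{S,R}$ with $\nu_S,\nu_{S,R}$ exactly the products written in the statement.

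Next I would check, straight from the definition (\ref{def_T}) of the action, that $f_S(T_x\omega)=\1\{\Gamma_\omega[V_Sx]=Sx\}$ and $f_{S,R}(T_x\omega)=\1\{Sx \text{ is }R\text{-isolated in }\Gamma_\omega\}$ for every $x\in G$. Hence $\sum_{x\in Q_n}f_S(T_x\omega)$ counts the $x\in Q_n$ with $\Gamma_\omega[V_Sx]=Sx$, while $\sharp_S(\Gamma_\omega[Q_n])$ counts the $x$ obeying the stronger requirement $V_Sx\subset Q_n$ together with $\Gamma_\omega[V_Sx]=Sx$; the analogous statement holds for $f_{S,R}$ and $\sharp_{S,R}$. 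The one point that needs a small argument is to bound the discrepancy between the index sets $\{x\in Q_n\}$ and $\{x:V_Sx\subset Q_n\}$. Setting $D:=\max_{g\in V_S}d(g,\id)$ and using right-invariance of the word metric (so $d(x,gx)=d(g,\id)\leq D$ for $g\in V_S$), any $x$ lying in one of these sets but not the other belongs to $\partial_{\mathrm{int}}^{D}(Q_n)$ (if $x\in Q_n$) or to $\partial_{\mathrm{ext}}^{D}(Q_n)$ (if $x\notin Q_n$), hence to $\partial^D(Q_n)$. Therefore
\[
\Bigl|\,\sharp_S(\Gamma_\omega[Q_n])-\sum_{x\in Q_n}f_S(T_x\omega)\,\Bigr|\leq\bigl|\partial^D(Q_n)\bigr|,
\]
and the same estimate for $\sharp_{S,R}$ against $f_{S,R}$. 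Dividing by $|Q_n|$ and invoking $|\partial^D(Q_n)|/|Q_n|\to 0$, valid for every F\o{}lner sequence (Section~2), shows that $\sharp_S(\Gamma_\omega[Q_n])/|Q_n|$ converges if and only if $\frac1{|Q_n|}\sum_{x\in Q_n}f_S(T_x\omega)$ does, with the same limit, and likewise for the $R$-isolated count.

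Finally, since $(Q_n)$ is tempered and $T$ is an ergodic, measure-preserving action of the amenable group $G$, the pointwise ergodic theorem of \cite{Lindenstrauss-01}, applied to $f_S$ and to $f_{S,R}$, furnishes a full-measure set $\Omega_S\subset\Omega$ on which $\frac1{|Q_n|}\sum_{x\in Q_n}f_S(T_x\omega)\to\EE(f_S)=\nu_S$ and $\frac1{|Q_n|}\sum_{x\in Q_n}f_{S,R}(T_x\omega)\to\EE(f_{S,R})=\nu_{S,R}$. Combined with the boundary estimate this gives $\nu_S(\Gamma_\omega)=\nu_S$ and $\nu_{S,R}(\Gamma_\omega)=\nu_{S,R}$ for all $\omega\in\Omega_{\mathrm{fr}}=\bigcap_{S\in\cS}\Omega_S$, as required. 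I expect the only genuine (and still routine) obstacle to be the bookkeeping in the boundary estimate, together with the observation that $f_{S,R}$ — although determined by infinitely many edge variables — is integrable with the stated convergent product as its mean; everything else is a direct application of the ergodic theorem.
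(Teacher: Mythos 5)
Your proposal is correct and follows essentially the same route as the paper: define the indicator functions $f_S$, $f_{S,R}$, compare $\sharp_S(\Gamma_\omega[Q_n])$ and $\sharp_{S,R}(\Gamma_\omega,Q_n)$ with the ergodic averages $\frac{1}{|Q_n|}\sum_{g\in Q_n}f(T_g\omega)$ up to a boundary term that vanishes by the F\o lner property, apply Lindenstrauss's pointwise ergodic theorem, and compute the expectations by independence. The only cosmetic differences are that the paper normalises $\id\in V_S$ and sandwiches the count between sums over $Q_{n,\diam(V_S)}$ and $Q_n$, whereas you use a two-sided $\partial^D(Q_n)$ estimate, and you make explicit the countability of $\cS$ and the measurability/integrability of $f_{S,R}$, which the paper leaves implicit.
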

 To prove this Lemma we cite a special case of a pointwise Ergodic Theorem due to
 Lindenstrauss (Theorem 1.2 in \cite{Lindenstrauss-01})

\begin{Theorem}\label{theorem_linde}
  Let $G$ act from the left on a measure space $(\Omega,\cA,\PP)$ by an ergodic and measure preserving transformation $T$ an let $(Q_n)$ be a tempered F\o{}lner sequence. Then for any $ f\in L^1(\PP)$
  \[
  \lim_{n\rightarrow\infty}\frac{1}{\vert Q_n \vert}\sum_{g\in Q}f(T_g \omega ) =\int f(\omega) d \PP(\omega)
  \]
  holds almost surely.
\end{Theorem}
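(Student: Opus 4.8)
The plan is to follow the classical route to a pointwise ergodic theorem: establish a weak-type maximal inequality for the averaging operators, verify convergence on a dense subclass of $L^1(\PP)$, and combine the two by the Banach principle. Throughout write $A_n f(\omega):=\frac{1}{|Q_n|}\sum_{g\in Q_n} f(T_g\omega)$ for the averages under consideration.

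First I would record the structural facts that come for free. Since $T$ is measure preserving, each $A_n$ is a positive contraction on every $L^p(\PP)$ and $\int A_n f\,d\PP=\int f\,d\PP$; since $T$ is ergodic, the only $T$-invariant functions are the constants, so the only candidate limit is $\int f\,d\PP$. Next I would prove convergence on a dense subclass. The closed linear span of the constants together with the coboundaries $h-h\circ T_s$ (with $s\in G$ and $h\in L^\infty(\PP)$) is all of $L^1(\PP)$, once more by ergodicity. On constants $A_n f$ equals $\int f\,d\PP$ identically, while on a coboundary the sum telescopes: after the index shift $g\mapsto gs$ only the terms indexed by the symmetric difference of $Q_n$ and its right translate $Q_n s$ survive, a set of relative size tending to zero by the F\o lner property. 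This gives $A_n f\to 0$ in $L^1$ and, with a short additional argument, almost everywhere on the dense class.

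The heart of the matter is the maximal inequality
\[
\PP\Bigl(\sup_{n} A_n|f|>\lambda\Bigr)\le \frac{C}{\lambda}\,\|f\|_1,\qquad \lambda>0,
\]
with $C$ depending only on the temperedness constant. Here the temperedness hypothesis is indispensable: by a transference argument it suffices to prove the analogous covering inequality for the averages over the left translates $\{gQ_n\}$ acting on $G$ itself, and the bound $\bigl|\bigcup_{k<n}Q_k^{-1}Q_n\bigr|\le C|Q_n|$ is precisely what keeps the accumulated overlap of those translates under control. Following Lindenstrauss I would prove a probabilistic Vitali-type covering lemma: given a finite family of translated F\o lner sets, process them from the largest index downward and retain each examined set with a suitable probability; temperedness then forces the expected multiplicity of the resulting subcover to remain bounded while a definite proportion of the union is still covered. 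Transferring this covering statement back to the action yields the weak-$(1,1)$ bound above.

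Finally, the maximal inequality together with almost-everywhere convergence on the dense subclass yields, by the Banach principle, that $A_n f\to\int f\,d\PP$ almost everywhere for the given $f\in L^1(\PP)$; discarding the resulting null set produces the full-measure set on which convergence holds, and in the application one simply intersects the countably many null sets arising from the indicator functions of the relevant cylinder events. The single genuine obstacle is the covering lemma underlying the maximal inequality: for an arbitrary F\o lner sequence the maximal function need not obey any weak-type bound, and it is exactly the temperedness (Shulman) condition that rescues the argument, so the entire proof hinges on exploiting that hypothesis correctly.
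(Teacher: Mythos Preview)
The paper does not supply its own proof of this statement: it is quoted verbatim as a special case of Theorem~1.2 in \cite{Lindenstrauss-01} and is used as a black box in the proof of Lemma~\ref{lemma_nu}. So there is nothing in the paper to compare your argument against.

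That said, your sketch is a faithful outline of Lindenstrauss's original proof: mean-ergodic convergence on the dense subspace generated by constants and coboundaries, the probabilistic Vitali-type covering lemma (whose proof is exactly where temperedness $\bigl|\bigcup_{k<n}Q_k^{-1}Q_n\bigr|\le C|Q_n|$ is consumed) transferred to a weak-$(1,1)$ maximal inequality, and the Banach principle to pass from the dense class to all of $L^1$. One small slip: with the left action $T_g\omega=\omega g^{-1}$ used in this paper one has $T_sT_g=T_{sg}$, so the telescoping for $h-h\circ T_s$ leaves terms over the symmetric difference of $Q_n$ and its \emph{left} translate $sQ_n$, not the right translate $Q_n s$; this is harmless here because the F\o lner condition in the paper is stated for left multiplication. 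Your last sentence correctly identifies the covering lemma as the only non-routine step, and that without temperedness the maximal inequality can genuinely fail.
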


\begin{proof}[Proof of Lemma \ref{lemma_nu}]
 Let $S=(V_S,E_S)\in \cS$ be a graph such that $\id\in V_S$. We define $A_S=\{\omega \in \Omega\st \Gamma_\omega[V_S]=S\}$ to be the subset of $\Omega$ consisting of all configurations where $\Gamma_\omega$ coincides with $S$ on $V_S$ and we denote the indicator function of $A_S$ by $f_S$. The number of occurrences of $S$ in the graph $\Gamma_\omega[Q_n]$ can be estimated by
 \begin{equation}\label{percineq}
 \sum\limits_{g\in Q_{n,\diam(V_S)}}f_S(\omega g^{-1})\leq\sharp_S(\Gamma_\omega[Q_n])\leq \sum\limits_{g\in Q_{n}}f_S( \omega g^{-1}).
\end{equation}
This proves on the one hand that
\[
 \limsup\limits_{n\rightarrow \infty} \frac{\sharp_S(\Gamma_\omega[Q_n])}{\vert Q_n \vert}
 \leq
 \limsup\limits_{n\rightarrow \infty} \frac{1}{\vert Q_n\vert} \sum\limits_{g\in Q_{n}}f_S(\omega g^{-1})
\]
holds. Using the fact 
\[
 \frac{1}{\vert Q_n\vert} \sum\limits_{g\in \partial_{\mathrm{int}}^R Q_n} f_S(\omega g^{-1})\leq \frac{\vert\partial_{\mathrm{int}}^R Q_n\vert}{\vert Q_n\vert}\rightarrow 0,\ n\rightarrow \infty
\]
for all $R>0$, (\ref{percineq}) also implies
\[
 \liminf\limits_{n\rightarrow \infty}\frac{\sharp_S(\Gamma_\omega[ Q_n])}{\vert Q_n \vert}
 \geq
\liminf\limits_{n\rightarrow \infty} \frac{1}{\vert Q_n\vert} \sum\limits_{ g \in Q_{n,\diam(V_S)}}f_S(\omega g ^{-1})
 =
\liminf\limits_{n\rightarrow \infty} \frac{1}{\vert Q_n\vert} \sum\limits_{ g \in Q_{n}}f_S(\omega g ^{-1}).
\]
Consequently $\nu_S(\Gamma_\omega)=\lim_{n\rightarrow \infty} {\sharp_S(\Gamma_\omega[Q_n])}/{\vert Q_n \vert}$ exists.
 The left action $T$ of $G$ on $(\Omega,\cA,\PP)$ is given by (\ref{def_T}), thus it is measure preserving and ergodic. Therefore Theorem \ref{theorem_linde} gives
 \[
  \nu_S(\Gamma_\omega)
  =\lim_{n\rightarrow\infty}\frac{1}{\vert Q_n \vert}\sum_{g\in Q_n}f_S( \omega g^{-1} ) 
  =\lim_{n\rightarrow\infty}\frac{1}{\vert Q_n \vert}\sum_{g\in Q_n}f_S(T_g \omega ) 
  = \EE (f_S) \hspace{1cm}\mbox{a.s.}
  \]
 The expectation value $\EE (f_S)$ can be computeted via the probabilities given by $p\in \ell^1(G)$ in (\ref{def_p}):
 \[
  \EE (f_S) 
  = \PP(f_S(\omega)=1) 
  = \prod_{[x,y]\in E_S} p(xy^{-1}) \cdot \prod_{\ato{[x,y]\notin E_S}{x,y\in V_S}} (1-p(xy^{-1})).
 \]
 This procedure works in the same way for $\nu_{S,R}(\Gamma_\omega)$. Defining 
\[
A_{S,R}=\{\omega\in\Omega\st \Gamma_\omega[V_S]=S \mbox{ and } S \mbox{ is $R$-isolated in } \Gamma_\omega\}
\]
and $f_{S,R}$ to be its indicator function, we get
\[
  \nu_{S,R}(\Gamma_\omega)
  =\EE (f_{S,R})
  = \prod_{[x,y]\in E_S} p(xy^{-1}) \cdot \prod_{\ato{[x,y]\notin E_S}{x,y\in V_S}} (1-p(xy^{-1})) \cdot \prod_{\ato{[x,y]\in E, x\in V_S,}{ y\notin V_S, d(x,y)\geq R}} (1-p(xy^{-1})).
\]
Here the last product is finite since $p\in \ell^1(G)$.
\end{proof}

 \begin{Theorem}\label{theorem_erg}
 Let $G$ be a finitely generated, amenable group and let $(Q_n)$ be a strictly increasing, tempered F\o lner sequence of monotiles. Let the functions $F_\omega: {\mathcal F}(G)\rightarrow \cB(\RR)$ and $\tilde F: \cS\to\cB(\RR)$ be given as in (\ref{def_F}) and (\ref{def_tilde_F}). Then the following limits
\[N := \lim\limits_{j\rightarrow \infty}\frac{F_\omega(Q_j)}{\vert Q_j \vert}= \lim\limits_{n \rightarrow\infty} \sum_{S\in {\mathcal S}(Q_n)} \nu_S \frac{\tilde F(S)}{\vert Q_n\vert}  \]
exist and are equal almost surely. The function $N$ is called \emph{integrated density of states}.
\end{Theorem}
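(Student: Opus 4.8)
The plan is to show first that $\bigl(F_\omega(Q_n)/|Q_n|\bigr)_n$ is almost surely a Cauchy sequence in the Banach space $\cB(\RR)$, hence convergent to some $N_\omega$; then that $N_\omega$ is non-random; and finally that the second expression has the same limit. For the first part, I would fix $R\ge R_0$, $0<\delta\le\tau^{-1}$ and $m\in\NN$. Since $Q_m$ is a monotile, write $G=\bigsqcup_{t\in T}Q_mt$ and set $I_{m,n}:=\{t\in T:Q_mt\subseteq Q_n\}$; a tile meeting both $Q_n$ and its complement has a vertex in $\partial^{\diam Q_m}_{\mathrm{int}}Q_n$, so disjointness of the tiles and the F\o lner property give $\bigl|Q_n\setminus\bigsqcup_{t\in I_{m,n}}Q_mt\bigr|=o(|Q_n|)$ and $|I_{m,n}|\,|Q_m|/|Q_n|\to 1$ as $n\to\infty$. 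On the event $\tilde\Omega(\delta,R,Q_n)$ I would apply Lemma~\ref{lemma_add} to the partition of $Q_n$ into the tiles $Q_mt$, $t\in I_{m,n}$, and the remainder; dividing by $|Q_n|$ and using $|\partial^R(Q_mt)|=|\partial^R Q_m|$, the linear bound of Lemma~\ref{F_bounded}, and Lemma~\ref{lemma_4} to pass between $F_\omega=F_\omega^0$ and $F_\omega^R$ (at a cost of at most $4|\partial^R_{\mathrm{int}}Q_n|$), this yields
\[
\left\Vert\frac{F_\omega(Q_n)}{|Q_n|}-\frac{1}{|Q_n|}\sum_{t\in I_{m,n}}F_\omega^R(Q_mt)\right\Vert\le 4\bigl(\epsilon(R)+\delta\bigr)+4\,\frac{|\partial^R Q_m|}{|Q_m|}+o_n(1).
\]
Then Lemma~\ref{lemma_F-tildeF} replaces each $F_\omega^R(Q_mt)$ by $\tilde F^R(\Gamma_\omega[Q_mt])$ at total cost $\epsilon(R)+\delta$, and translation invariance of $\tilde F^R$ (Lemma~\ref{F_bounded}(ii)) together with $(\Gamma_\omega[Q_mt])t^{-1}=\Gamma_{T_t\omega}[Q_m]$ turns the remaining sum into $\sum_{S\in\cS(Q_m)}\bigl(\sum_{t\in I_{m,n}}\mathbf 1\{\Gamma_{T_t\omega}[Q_m]=S\}\bigr)\tilde F^R(S)$. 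The key point is that the graphs $\Gamma_\omega[Q_mt]$, $t\in T$, are measurable with respect to pairwise disjoint blocks of the independent edge variables, so they are i.i.d.\ with the law of $\Gamma_\omega[Q_m]$; since $|I_{m,n}|\to\infty$, a concentration estimate (Theorem~\ref{theorem_bernstein}) together with Borel--Cantelli shows that almost surely, for each of the finitely many $S\in\cS(Q_m)$, $|I_{m,n}|^{-1}\sum_{t\in I_{m,n}}\mathbf 1\{\Gamma_{T_t\omega}[Q_m]=S\}\to\nu_S$. Hence $\frac{1}{|Q_n|}\sum_{t\in I_{m,n}}\tilde F^R(\Gamma_\omega[Q_mt])$ converges almost surely to the non-random $L_{m,R}:=|Q_m|^{-1}\sum_{S\in\cS(Q_m)}\nu_S\,\tilde F^R(S)\in\cB(\RR)$.

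Next, since $\PP\bigl(\Omega\setminus\tilde\Omega(\delta,R,Q_n)\bigr)\le e^{-\delta^2|Q_n|/4}$ by Corollary~\ref{corollary_bernstein} and $(|Q_n|)$ is strictly increasing, this is summable in $n$; intersecting the resulting Borel--Cantelli sets over $m\in\NN$, over $S\in\cS(Q_m)$, and over sequences $\delta_k\downarrow 0$ (with $\delta_k\le\tau^{-1}$) and $R_k\uparrow\infty$ (with $R_k\ge R_0$), together with $\Omega_{\mathrm{lf}}$, produces a full-measure set $\Omega^\ast$ on which $\omega\in\tilde\Omega(\delta_k,R_k,Q_n)$ for all large $n$ and all $k$, and on which the limits of the previous paragraph hold. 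Combining the estimates above, for every $m$, $k$ and $\omega\in\Omega^\ast$,
\[
\limsup_{n\to\infty}\left\Vert\frac{F_\omega(Q_n)}{|Q_n|}-L_{m,R_k}\right\Vert\le 5\bigl(\epsilon(R_k)+\delta_k\bigr)+4\,\frac{|\partial^{R_k}Q_m|}{|Q_m|}.
\]
Letting $m\to\infty$ (so $|\partial^{R_k}Q_m|/|Q_m|\to 0$ by the F\o lner property) and then $k\to\infty$ (so $\delta_k,\epsilon(R_k)\to 0$), the right-hand side tends to $0$; as the $L_{m,R_k}$ are non-random, this simultaneously shows that $\bigl(F_\omega(Q_n)/|Q_n|\bigr)_n$ is Cauchy — hence convergent in $\cB(\RR)$ to some $N_\omega$ — and that $N_\omega$ takes the same value $N$ for all $\omega\in\Omega^\ast$.

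It remains to identify $N$ with the second limit. For $S\in\cS(Q_n)$ one has $\nu_S=\PP(\Gamma_\omega[Q_n]=S)$ by Lemma~\ref{lemma_nu}, and $\sum_{S\in\cS(Q_n)}\nu_S=1$, so $\sum_{S\in\cS(Q_n)}\nu_S\,\tilde F(S)=\EE\bigl[\tilde F(\Gamma_\omega[Q_n])\bigr]=\EE\bigl[n(\Delta_{\Gamma_\omega[Q_n]})\bigr]$. The matrix $D^0_\omega(Q_n)=\Delta_{\Gamma_\omega[Q_n]}-\Delta_\omega[Q_n]$ is diagonal, its trace norm counts the edges from $Q_n$ to $G\setminus Q_n$, and splitting $Q_n=Q_{n,R}\cup\partial^R_{\mathrm{int}}Q_n$ gives $\EE\bigl[\Tr|D^0_\omega(Q_n)|\bigr]\le|Q_n|\,\epsilon(R)+\|p\|_1\,|\partial^R_{\mathrm{int}}Q_n|$; hence by Lemma~\ref{lemma_rank} the quantities $\frac{1}{|Q_n|}\sum_{S\in\cS(Q_n)}\nu_S\,\tilde F(S)$ and $\EE\bigl[F_\omega(Q_n)/|Q_n|\bigr]$ differ by at most $\epsilon(R)+\|p\|_1|\partial^R_{\mathrm{int}}Q_n|/|Q_n|$, which tends to $\epsilon(R)$, and hence to $0$ after letting $R\to\infty$. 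Since $\|F_\omega(Q_n)\|\le|Q_n|$, bounded convergence together with the almost sure convergence just proved gives $\EE\bigl[F_\omega(Q_n)/|Q_n|\bigr]\to N$, so the second limit exists and equals $N$ as well.

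The step I expect to be the main obstacle is that the operator is not of finite range, so Lemma~\ref{lemma_add} provides only \emph{approximate} additivity, carrying an irreducible $\epsilon(R)|Q_n|$ error and valid only on the large‑probability set $\tilde\Omega$. Making everything fit requires interleaving four limits — $n\to\infty$ inside, then $m\to\infty$, then $\delta\to 0$, then $R\to\infty$ — while keeping all the ``with high probability'' events valid simultaneously, which is exactly where strict monotonicity of $(|Q_n|)$ enters (it makes the exceptional probabilities summable for Borel--Cantelli). A secondary difficulty is that the averages over tile positions are not F\o lner averages, so Theorem~\ref{theorem_linde} cannot be applied to them directly; it is the independence of the tile restrictions, rather than the ergodic theorem, that makes the frequency $\nu_S$ appear.
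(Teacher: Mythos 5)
Your argument is correct, and in its probabilistic core it takes a genuinely different route from the paper. You reuse the paper's deterministic machinery in the same way (Lemma~\ref{lemma_add} for approximate additivity on the good event $\tilde\Omega(\delta,R,Q_n)$, Lemma~\ref{lemma_F-tildeF} to pass to the decoupled tile graphs, Lemma~\ref{F_bounded} and Lemma~\ref{lemma_4} for the $F_\omega\leftrightarrow F_\omega^R$ comparison, Corollary~\ref{corollary_bernstein} plus Borel--Cantelli to make the good events eventually hold), but where the paper tiles the large set $Q_j$ by translates of $Q_n$ averaged over all $|Q_n|$ shifts of the grid — precisely so that the resulting counts become the pattern frequencies $\sharp_S(\Gamma_\omega[Q_j])$, whose convergence to $\nu_S$ is obtained from Lindenstrauss' ergodic theorem via Lemma~\ref{lemma_nu} (this is where temperedness is used) — you keep a single tiling grid and exploit that the restrictions $\Gamma_{T_t\omega}[Q_m]$, $t\in T$, depend on pairwise disjoint blocks of the independent edge variables, so a Bernstein/Hoeffding bound and Borel--Cantelli give the law of large numbers directly, with limit $\nu_S=\PP(\Gamma_\omega[Q_m]=S)$; you also identify the second limit by the expectation identity $\sum_{S\in\cS(Q_n)}\nu_S\tilde F(S)=\EE\bigl[n(\Delta_{\Gamma_\omega[Q_n]})\bigr]$, a trace bound on $D^0_\omega(Q_n)$ and dominated convergence, where the paper instead estimates $\Vert\tilde F^{R(n)}(S)-\tilde F(S)\Vert$ by Lemma~\ref{lemma_4} and uses $\sum_S\nu_S\le1$. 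Your route is more elementary and self-contained on the probabilistic side (it does not need the ergodic theorem, hence not temperedness, for the frequency convergence — only for the a priori definition of $\nu_S$ along $(Q_n)$, which you bypass by identifying $\nu_S$ with the probability), and it avoids the paper's coupled choices $R(n)\to\infty$, $\delta(j)\to0$ in favour of countable grids of fixed parameters; what it gives up is generality: the paper's ergodic-theorem route would survive if the edge variables were merely ergodic rather than i.i.d., and the grid-shift averaging connects the proof to the frequencies $\sharp_S$, $\nu_{S,R}$ that are reused in the analysis of discontinuities. Two small points you should make explicit in a write-up: the remainder $Q_n\setminus\bigsqcup_{t\in I_{m,n}}Q_mt$ is contained in $\partial^{\diam Q_m}_{\mathrm{int}}Q_n$ and its $R$-boundary is controlled by $|B_R|$ times its size, which justifies your $o_n(1)$; and since $(Q_n)$ is only increasing in cardinality, not nested, the index sets $I_{m,n}$ are not monotone, so the frequency convergence must indeed be run through the concentration-plus-Borel--Cantelli argument (as you do) rather than through the classical SLLN for a fixed sequence of partial sums.
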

The proof of this is based on the following Lemma.
\begin{Lemma}\label{lemma_unif}
 Let $G$ be a finitely generated, amenable group and let $(Q_n)$ be a strictly increasing, tempered F\o lner sequence of monotiles. 
 Let $j\in \NN$, $R\geq R_0$ and $0<\delta\leq \tau^{-1}$ be given, where $R_0$ and $\tau$ are constants given by Lemma \ref{lemma_berstein_ineq}. Set $\epsilon=\epsilon(R)=\sum_{y\in G\setminus B_R} p(y)$ as in (\ref{def_epsilon(R)}) and $\Omega_j =  \tilde \Omega(\delta, R, Q_j)\cap \Omega_{\mathrm{fr}}$, where $\tilde\Omega(\delta,R,Q_j)$ is as in (\ref{def_tilde_Omega}) and $\Omega_{\mathrm{fr}}$ as in Lemma \ref{lemma_nu}. The functions $F^R_\omega: {\mathcal F}(G)\rightarrow \cB(\RR)$ and $\tilde F^R: \cS\to\cB(\RR)$ are defined as in (\ref{def_F}) and (\ref{def_tilde_F}). Then the difference
\[
 D_\omega(j,n,R):=\left\Vert \frac{F_\omega^R(Q_j)}{\vert Q_j \vert} -   \sum_{S\in {\mathcal S}(Q_n)} \nu_S \frac{\tilde F^R(S)}{\vert Q_n\vert}\right\Vert.
\]
satisfies the estimate
\begin{equation*}
  D_\omega(j,n,R) 
\leq  
4 \frac{\vert \partial^R Q_n\vert}{\vert Q_n\vert}
+ \left( 4  \frac{\vert\partial^R Q_n\vert}{\vert Q_n\vert}  +1 \right)\frac{\vert \partial^{\diam Q_n} Q_j \vert}{\vert Q_j\vert} 
+ 5  (\epsilon+\delta)
+    \sum_{S\in \cS (Q_n)} \Big \vert \frac{\sharp_S(\Gamma_\omega [Q_j])}{\vert Q_j\vert}-\nu_S \Big \vert 
\end{equation*}
for all $\omega \in \Omega_{j}$ and all $n\in \NN$.
\end{Lemma}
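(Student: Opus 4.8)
The plan is to bound $D_\omega(j,n,R)$ by a chain of triangle inequalities, inserting the ``right'' intermediate quantity at each stage and controlling each difference by one of the lemmas already established. The natural intermediate object is a sum over a tiling of $Q_j$ by translates of $Q_n$: since $(Q_n)$ is a sequence of monotiles, for each fixed $n$ we may tile $G$ by translates $Q_n t$, $t\in T_n$, and collect those translates entirely contained in $Q_j$ (the leftover vertices lie in a $\diam Q_n$-neighbourhood of the boundary of $Q_j$, hence in $\partial^{\diam Q_n} Q_j$). Call this collection $\{Q_n t \st t \in T_n', \ Q_n t \subset Q_j\}$ together with one remainder set $P_j$ consisting of the uncovered vertices of $Q_j$. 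Then $Q_j$ is a disjoint union $P_j \dotcup \bigcup_{t} Q_n t$ with $|P_j| \le |\partial^{\diam Q_n} Q_j|$ and the number of translates is at most $|Q_j|/|Q_n|$.

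First I would apply Lemma \ref{lemma_add} to this decomposition to replace $F_\omega^R(Q_j)$ by $\sum_t F_\omega^R(Q_n t) + F_\omega^R(P_j)$ up to an error of $4|Q_j|(\epsilon+\delta) + 4\sum |\partial^R(\cdot)|$; after dividing by $|Q_j|$ this contributes the $4(\epsilon+\delta)$ term plus boundary terms, where one uses that $\sum_t |\partial^R(Q_n t)| = (\#\text{translates})\cdot|\partial^R Q_n| \le \frac{|Q_j|}{|Q_n|}|\partial^R Q_n|$ and $|\partial^R P_j|$ together with $\|F_\omega^R(P_j)\| \le |P_j| \le |\partial^{\diam Q_n} Q_j|$ produce the remaining boundary contributions. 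Next I would apply Lemma \ref{lemma_F-tildeF} to the same disjoint family $\{Q_n t\}$ to pass from $\sum_t F_\omega^R(Q_n t)$ to $\sum_t \tilde F^R(\Gamma_\omega[Q_n t])$ at the cost of another $|Q_j|(\epsilon+\delta)$, i.e.\ one more $(\epsilon+\delta)$ after normalising. By translation invariance of $\tilde F^R$ (Lemma \ref{F_bounded}(ii)), $\tilde F^R(\Gamma_\omega[Q_n t])$ depends only on the isomorphism type of the induced subgraph, so $\sum_t \tilde F^R(\Gamma_\omega[Q_n t]) = \sum_{S\in\cS(Q_n)} \#\{t \st \Gamma_\omega[Q_n t] \cong Sx\}\, \tilde F^R(S)$; the counting multiplicity here differs from $\sharp_S(\Gamma_\omega[Q_j])$ only by occurrences of $S$ straddling the tiling seams or near $\partial Q_j$, again an error of order $|\partial^{\diam Q_n} Q_j|$ in the count, which when multiplied by $\|\tilde F^R(S)\|/|Q_j| \le |V_S|/|Q_j| = |Q_n|/|Q_j|$ and summed is absorbed into the stated boundary terms (this is where the factor $(4|\partial^R Q_n|/|Q_n| + 1)$ shape comes from, combining the $\partial^R$-loss inside each tile with the seam loss).

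Finally, having reached $\frac{1}{|Q_j|}\sum_{S\in\cS(Q_n)} \sharp_S(\Gamma_\omega[Q_j])\, \tilde F^R(S)$, I would compare it with the target $\sum_{S\in\cS(Q_n)} \nu_S \frac{\tilde F^R(S)}{|Q_n|}$: writing both with denominator related via $|Q_j| \approx (\#\text{translates})|Q_n|$, the difference is controlled termwise by $\big|\frac{\sharp_S(\Gamma_\omega[Q_j])}{|Q_j|} - \frac{\nu_S}{|Q_n|}\big|\cdot\|\tilde F^R(S)\|$, and since $\sum_{S\in\cS(Q_n)}\nu_S \le 1$ while $\|\tilde F^R(S)\| \le |Q_n|$, the $\nu_S$-part telescopes against the normalisation and what survives is exactly $\sum_{S\in\cS(Q_n)}\big|\frac{\sharp_S(\Gamma_\omega[Q_j])}{|Q_j|} - \nu_S\big|$ plus, again, a $|\partial^{\diam Q_n}Q_j|/|Q_j|$-type term and possibly one further $(\epsilon+\delta)$, accounting for the last two of the five copies of $(\epsilon+\delta)$. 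The bookkeeping of how the small errors aggregate into precisely the coefficients $4$, $4\frac{|\partial^R Q_n|}{|Q_n|}+1$, and $5(\epsilon+\delta)$ stated in the lemma is the main obstacle: each of the three reductions leaks both an $(\epsilon+\delta)$-type term (from rank bounds on long edges) and a boundary term (from $\partial^R$ inside tiles and from tiling seams near $\partial Q_j$), and one must carefully route the latter through the two-layer estimate — inner boundary $\partial^R Q_n$ scaled by the number of tiles, outer remainder $\partial^{\diam Q_n} Q_j$ — without double counting, all while keeping $\omega$ fixed in $\Omega_j$ so that Lemmas \ref{lemma_add} and \ref{lemma_F-tildeF} (which require $\omega\in\tilde\Omega(\delta,R,Q_j)$, not merely $\tilde\Omega$ for the smaller tiles) genuinely apply to the decomposition of $Q_j$.
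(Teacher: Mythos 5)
Your overall architecture (pass from $F_\omega^R(Q_j)$ to a sum over translates of $Q_n$ via Lemma \ref{lemma_add}, then to $\tilde F^R$ of the induced subgraphs via Lemma \ref{lemma_F-tildeF} and translation invariance, then to frequencies) is the same as the paper's, and your peripheral observations are sound: $|P_j|\le|\partial^{\diam Q_n}Q_j|$, and Lemmas \ref{lemma_add} and \ref{lemma_F-tildeF} must indeed be invoked for $\omega\in\tilde\Omega(\delta,R,Q_j)$, i.e.\ for the large set $Q_j$. The genuine gap is in the middle step, where you use a \emph{single} fixed tiling and claim that the number of tiles $t$ with $\Gamma_\omega[Q_nt]=St$ differs from $\sharp_S(\Gamma_\omega[Q_j])$ only by seam and boundary effects of order $|\partial^{\diam Q_n}Q_j|$. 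This is false: $\sharp_S(\Gamma_\omega[Q_j])$ counts \emph{all} positions $x$ with $Q_nx\subset Q_j$ and $\Gamma_\omega[Q_nx]=Sx$, of which the grid-aligned ones are only a fraction of order $1/|Q_n|$; summing over $S\in\cS(Q_n)$, the tiling count is about $|Q_j|/|Q_n|$ while $\sum_S\sharp_S(\Gamma_\omega[Q_j])$ is about $|Q_j|$. Consequently the quantity you claim to reach, $\frac{1}{|Q_j|}\sum_{S}\sharp_S(\Gamma_\omega[Q_j])\,\tilde F^R(S)$, evaluated at a large energy is of order $|Q_n|$, whereas both $\frac{1}{|Q_j|}\sum_t\tilde F^R(\Gamma_\omega[Q_nt])$ and the target $\sum_S\nu_S\tilde F^R(S)/|Q_n|$ are of order $1$; the missing factor $1/|Q_n|$ cannot be absorbed into boundary terms. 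Moreover, even with the normalisation corrected, for a fixed $\omega$ the empirical pattern distribution along the sparse grid $T_n$ need not be close to the distribution over all positions, and it is not controlled by the error term $\sum_S\big|\sharp_S(\Gamma_\omega[Q_j])/|Q_j|-\nu_S\big|$ allowed in the statement, since Lemma \ref{lemma_nu} only governs the all-position counts.

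The paper closes exactly this gap by averaging over all $|Q_n|$ shifted tilings $\{Q_nt\st t\in T_nx\}$, $x\in Q_n$: the identity $\{g\in G\st Q_ng\subset Q_j\}=\dot\bigcup_{x\in Q_n}I(Q_j,x,n)$ shows that the shifted grids together visit every admissible position exactly once, so that $\sum_{x\in Q_n}\sum_{g\in I(Q_j,x,n)}\tilde F^R(\Gamma_\omega[Q_ng])=\sum_{S\in\cS(Q_n)}\sharp_S(\Gamma_\omega[Q_j])\,\tilde F^R(S)$ holds \emph{exactly}, with no seam error at all, and the normalisation $\frac{1}{|Q_j|\,|Q_n|}$ produced by this averaging is precisely what matches $\nu_S\tilde F^R(S)/|Q_n|$. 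Lemma \ref{lemma_add} (with the decomposition of $Q_j$ into $(Q_ng)\cap Q_j$, $g\in W(Q_j,x,n)$) and Lemma \ref{lemma_F-tildeF} are then applied for each shift $x$ and the results averaged over $x\in Q_n$, which is how the coefficients $4$, $4|\partial^RQ_n|/|Q_n|+1$ and $5(\epsilon+\delta)$ arise. Without this shift-averaging your comparison with $\nu_S$ does not go through, and the bookkeeping you defer is not a mere technicality but the place where the argument currently fails.
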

Note that by Corollary \ref{corollary_bernstein} we have $\PP (\Omega_{j})\geq 1-\exp(-\delta^2|Q|/4)$ for all $0<\delta\leq \tau^{-1} $.

\begin{proof}
Let $n\in\NN$ and $\omega\in \Omega_j$ be given. By inserting zeros we estimate the difference $D_\omega(j,n,R)$ in the following way
\begin{eqnarray*}
 D_\omega(j,n,R)&\leq&\left\Vert \frac{F_\omega^R(Q_j)}{\vert Q_j \vert}
- \sum_{\ato{g\in G}{Q_ng\subset Q_j}} \frac{F_\omega^R(Q_ng)}{\vert Q_j\vert \cdot \vert Q_n\vert} \right\Vert\\
&&+   \left\Vert   \sum_{\ato{g\in G}{Q_ng\subset Q_j}} \frac{F_\omega^R(Q_ng)}{\vert Q_j\vert\cdot \vert Q_n\vert}
- \sum\limits_{S\in {\mathcal S}(Q_n)}\frac{\sharp_S( \Gamma_\omega[Q_j])}{\vert Q_j\vert}\frac{\tilde F^R(S)}{\vert Q_n\vert}\right\Vert \\
&& +\left\Vert \sum\limits_{S\in\cS(Q_n)}\frac{\sharp_S(\Gamma_\omega[Q_j])}{\vert Q_j\vert}\frac{\tilde F^R(S)}{\vert Q_n\vert}
-\sum_{S\in\cS(Q_n)} \nu_S \frac{\tilde F^R(S)}{\vert Q_n\vert}\right\Vert 
\end{eqnarray*}
With another application of the triangle inequality this gives
\[D_\omega(j,n,R)\leq D_\omega^{(1)}(j,n,R)+D_\omega^{(2)}(j,n,R)+D_\omega^{(3)}(j,n,R),\]
where
\begin{eqnarray*}
 D_\omega^{(1)}(j,n,R)&:=& \frac{1}{\vert Q_j\vert \cdot \vert Q_n\vert} \Bigg\Vert \sum_{x\in Q_n} F_\omega^R(Q_j)
- \sum_{\ato{g\in G}{Q_ng\subset Q_j}}F_\omega^R(Q_ng) \Bigg\Vert \\
 D_\omega^{(2)}(j,n,R)&:=& \frac{1}{\vert Q_j\vert \cdot \vert Q_n\vert}  \Bigg\Vert   \sum_{\ato{g\in G}{Q_ng\subset Q_j}} F_\omega^R(Q_ng) - \sum\limits_{S\in {\mathcal S}(Q_n)} \sharp_S( \Gamma_\omega[Q_j])  \tilde F^R(S) \Bigg\Vert \\
 D_\omega^{(3)}(j,n,R)&:=&  \sum\limits_{S\in\cS(Q_n)}\left\vert \frac{\sharp_S(\Gamma_\omega[Q_j])}{\vert Q_j\vert}   -\nu_S\right\vert  \frac{\Vert \tilde F^R(S) \Vert}{\vert Q_n\vert} .
\end{eqnarray*}
We use the boundedness of $\tilde F^R(S) $, see Lemma \ref{F_bounded} to obtain
\begin{equation}\label{lemma_unif_bnd_D3}
D_\omega^{(3)}(j,n,R)\leq  \sum_{S\in\cS(Q_n)} \left \vert \frac{\sharp_S(\Gamma_\omega[Q_j])}{\vert Q_j\vert}-\nu_S\right \vert. 
\end{equation}

To estimate the other terms we make use of the tiling property of the set $Q_n$, which gives us that there exists a set $T_n\subset G$ such that $G$ is the disjoint union of the sets $Q_nt$, $t\in T_n$. For fixed $x\in G$ we shift the grid $T_nx=\{tx\st t\in T_n\}$ and get
\[
 G=Gx=\bigcup_{t\in T_n}Q_n t x =\bigcup_{t\in T_n x}Q_n t 
\]
and $Q_nt\cap Q_nt'=\emptyset$ for distinct $t,t'\in T$. This shows that $\{Q_nt\st t\in T_nx\}$ is a tiling of $G$ as well.
Given a set $U\in {\mathcal F}(G)$ and an element $x\in G$, we set
\[W(U,x,n):=\{g \in T_{n}x\st  Q_ng \cap U \neq \emptyset\}\]
and distinguish two types of elements in $W(U,x,n)$
\[I(U,x,n):=\{g \in T_{n}x\st  Q_ng \subset U \} \hspace{0.6cm}\mbox{and}\hspace{0.6cm}\partial (U,x,n):=W(U,x,n) \setminus I(U,x,n).\]
Therefore translations of $Q_n$ by elements of $I(U,x,n)$ are completely contained in $U$ whereas translations of $Q_n$ by elements of $\partial (U,x,n)$ have non-empty intersections with both $U$ and $G\setminus U$. By construction we have the following equality
\begin{equation}\label{lemma_unif1}
 \{g\in G\st Q_n g \subset Q_j \} = \dot\bigcup_{x\in Q_n} I(Q_j,x,n).
\end{equation}
We use the invariance of $\tilde F^R$ under translation, see Lemma \ref{F_bounded} and then (\ref{lemma_unif1}) to obtain
\begin{eqnarray*}
 D_\omega^{(2)}(j,n,R)&=& \frac{1}{\vert Q_j\vert \cdot \vert Q_n\vert}  \Bigg\Vert   \sum_{\ato{g\in G}{Q_ng\subset Q_j}} \left(F_\omega^R(Q_ng) -    \tilde F^R(\Gamma_\omega[Q_n g])\right) \Bigg\Vert \\
 &\leq & \frac{1}{\vert Q_j\vert \cdot \vert Q_n\vert}     \sum_{x\in Q_n} \sum_{g\in I(Q_j,x,n)}\left\Vert F_\omega^R(Q_ng) -    \tilde F^R(\Gamma_\omega[Q_n g]) \right\Vert .
\end{eqnarray*}
As $\omega\in\tilde\Omega(\delta,R,Q_j)$ and as $Q_ng\cap Q_n h=\emptyset$ for distinct $g,h\in I(Q_j,x,n)$, Lemma \ref{lemma_F-tildeF} leads to
\begin{equation}\label{lemma_unif_bnd_D2}
 D_\omega^{(2)}(j,n,R)\leq \frac{1}{\vert Q_j\vert \cdot \vert Q_n\vert}     \sum_{x\in Q_n} \vert Q_j\vert (\epsilon+\delta) = \epsilon + \delta.
\end{equation}

To estimate $D_\omega^{(1)}(j,n,R)$ firstly note that the disjointness of the translates and the fact that $Q_ng\subset \partial^{\diam Q_n}Q_j$ holds for all $g\in \partial(Q_j,x,n)$ imply the following inequalities:
\begin{equation}\label{diamest}
\vert \partial (Q_j,x,n) \vert \cdot  \vert Q_n\vert  \leq \vert \partial^{\diam Q_n} Q_j\vert          \hspace{1cm}\mbox{and}\hspace{1cm}      \vert I (Q_j,x,n) \vert \cdot \vert Q_n\vert \leq \vert Q_j \vert.
\end{equation}
We use again (\ref{lemma_unif1}) to obtain
\begin{equation}\label{lemma_unif2}
 D_\omega^{(1)}(j,n,R) \leq \frac{1}{\vert Q_j\vert \cdot \vert Q_n\vert} \sum_{x\in Q_n} \Bigg\Vert  F_\omega^R(Q_j)
- \sum_{g\in I(Q_j,x,n)}F_\omega^R(Q_ng) \Bigg\Vert 
\end{equation}
and analyse one summand
\begin{align*}
 A_\omega^R(Q_j,x,n) 
 &:=
 \bigg\Vert F_\omega^R(Q_j) - \sum\limits_{g\in I(Q_j,x,n)} F_\omega^R(Q_ng )\bigg\Vert = \bigg\Vert F_\omega^R(Q_j) - \sum\limits_{g\in I(Q_j,x,n)} F_\omega^R((Q_ng)\cap Q_j)\bigg\Vert  \\
&\leq 
\bigg\Vert F_\omega^R(Q_j) -  \sum\limits_{g\in W(Q_j,x,n)} F_\omega^R((Q_n g) \cap Q_j) \bigg\Vert + \bigg\Vert \sum\limits_{g\in \partial(Q_j,x,n)} F_\omega^R((Q_ng) \cap Q_j)   \bigg \Vert
\end{align*}
where the last inequality holds since $W(Q_j,x,n)$ is the disjoint union of $\partial(Q_j,x,n)$ and $I(Q_j,x,n)$. Next we use the weak form of additivity given by Lemma \ref{lemma_add}. This is applicable since $\omega\in\Omega_j\subset \tilde\Omega(\delta,R,Q_j)$ and gives together with the boundedness of $F_\omega^R$ see Lemma \ref{F_bounded} the following
\[
 \begin{split}
A_\omega^R(Q_j,x,n)\leq  4\bigg ( \sum\limits_{g\in I(Q_j,x,n)}\vert \partial^R(Q_n g)\vert +\sum\limits_{g\in \partial(Q_j,x,n)}\vert \partial^R((Q_n g) \cap Q_j)\vert + \vert Q_j\vert(\epsilon+\delta)\bigg ) \\  + \sum\limits_{g\in \partial(Q_j,x,n)} \vert Q_n g  \vert .
 \end{split}
\]
The invariance of $\partial^R(\cdot)$ and $|\cdot|$ under translation and the inequalities (\ref{diamest}) yield
\begin{align*}
A_\omega^R(Q_j,x,n)&\leq 4 \vert \partial^R Q_n \vert \vert I(Q_j,x,n)\vert   +4  \vert \partial^R Q_n \vert \vert \partial(Q_j,x,n)\vert  +  \vert Q_n \vert \vert \partial(Q_j,x,n)\vert+ 4 \vert Q_j\vert (\epsilon+\delta) \\
&\leq  4 \vert \partial^R Q_n \vert\frac{\vert Q_j\vert}{\vert Q_n\vert}+4 \vert \partial^R Q_n \vert \frac{\vert\partial^{\diam Q_n} Q_j\vert}{\vert Q_n\vert}+ \vert \partial^{\diam Q_n} Q_j \vert + 4 \vert Q_j\vert (\epsilon+\delta)   
\end{align*}
which we plug in at (\ref{lemma_unif2}) and obtain
\begin{align}
 D_\omega^{(1)}(j,n,R) &\leq  \frac{1}{\vert Q_j \vert}\left( 4 \vert \partial^R Q_n\vert\frac{\vert Q_j\vert}{\vert Q_n\vert}+\left( 4 \frac{\vert \partial^R Q_n\vert}{\vert Q_n\vert}+ 1\right)\vert \partial^{\diam Q_n} Q_j \vert + 4 \vert Q_j\vert (\epsilon+\delta)   \right) \nonumber \\
 &= 4 \frac{\vert \partial^R Q_n\vert}{\vert Q_n\vert}+ \left( 4  \frac{\vert\partial^R Q_n\vert}{\vert Q_n\vert}  +1 \right)\frac{\vert \partial^{\diam Q_n} Q_j \vert}{\vert Q_j\vert} + 4  (\epsilon+\delta) . \label{lemma_unif_bnd_D1}
\end{align}
The combination of the estimates in (\ref{lemma_unif_bnd_D3}), (\ref{lemma_unif_bnd_D2}) and (\ref{lemma_unif_bnd_D1}) gives
\[
 D_\omega(j,n,R) \leq 4 \frac{\vert \partial^R Q_n\vert}{\vert Q_n\vert}+ \left( 4  \frac{\vert\partial^R Q_n\vert}{\vert Q_n\vert}  +1 \right)\frac{\vert \partial^{\diam Q_n} Q_j \vert}{\vert Q_j\vert} + 5  (\epsilon+\delta)+    \sum_{S\in \cS(Q_n)} \Big \vert \frac{\sharp_S(\Gamma_\omega[Q_j])}{\vert Q_j\vert}-\nu_S\Big \vert
\]
which proves the desired estimate on $D_\omega(j,n,R)$. 
\end{proof}

\begin{proof}[Proof of Theorem \ref{theorem_erg}]
For given $j,n\in \NN$, $R\geq R_0$, $0<\delta\leq \tau^{-1}$ and $\omega \in \Omega_j:= \tilde\Omega(\delta,R,Q_j)\cap\Omega_{\mathrm{fr}}$ we set
\[
 B_\omega(j,n,R,\delta):=4 \frac{\vert \partial^R Q_n\vert}{\vert Q_n\vert}+ \left( 4  \frac{\vert\partial^R Q_n\vert}{\vert Q_n\vert}  +1 \right)\frac{\vert \partial^{\diam Q_n} Q_j \vert}{\vert Q_j\vert} + 5  (\epsilon+\delta)+    \sum_{S\in \cS(Q_n)} \Big \vert \frac{\sharp_S(\Gamma_\omega[Q_j])}{\vert Q_j\vert}-\nu_S\Big \vert
\]
i.e. the upper bound for $D_\omega(j,n,R)$ given in the previous Lemma. In the following we explain how to choose the mutual dependences of the parameters $j,n,R,\delta$ in order to obtain sufficient control on $B_\omega(j,n,R,\delta)$ and $\PP(\Omega_j)$ and be able to conclude the statement of the theorem.

 Since $(Q_n)$ is a F\o lner sequence we have for all $R\in \NN$ that $\lim_{n\rightarrow\infty}\vert Q_n\vert^{-1}\vert \partial^R Q_n \vert=0$. The function $R(n)$ is defined inductively in the following way: for all $k\in\NN$ we choose $n_k$ to be the smallest natural number such that $\vert Q_n\vert^{-1}\vert \partial^k Q_n \vert\leq k^{-1}$ for all $n\geq n_k$. Now we set $R(n)=R_0$ for all $n < n_{R_0}$ and $R(n)=k$ for all $n_k\leq n < n_{k+1}$, $k\geq R_0$. This gives a function $n\mapsto R(n)$ satisfying
 \[
  R(n)\geq R_0 \mbox{ for all }n\in \NN, \quad \lim_{n\rightarrow\infty} R(n)=\infty \quad \mbox{and}\quad \lim_{n\rightarrow \infty}\frac{\vert \partial^{R(n)}Q_n\vert }{\vert Q_n\vert}=0.
 \]
Furthermore recall that $\epsilon=\epsilon(R)=\sum_{y\in G\setminus B_R} p(y)$, as in (\ref{def_epsilon(R)}). Thus we have $\lim_{n\rightarrow\infty} \epsilon(R(n))=0$. Setting $\delta(j):=(j^{1/4} \tau)^{-1}$ implies for fixed $n\in\NN$
\begin{equation}\label{theorem_erg3}
 \delta (j)\leq \tau^{-1} \mbox{ for all } j\in \NN, \quad 
\lim_{j\to\infty}  \delta(j) = 0
\quad\mbox{as well as}\quad
\exp\left(-\frac{\delta(j)^2|Q_j|}{4} \right)
\leq 
\exp\left(-\frac{j^{1/2}}{4\tau^{2}} \right)
\end{equation}
 for all $j\in\NN$. Here we used $j\leq \vert Q_j\vert$, which holds since $(Q_j)$ is strictly increasing.
Now for $j,n\in \NN$ Lemma \ref{lemma_unif} implies that
 \[
 D_\omega(j,n) := D_\omega(j,n,R(n))  \leq  B_\omega (j,n,R(n),\delta(j)) =: B_\omega (j,n)
 \]
 holds for all $\omega\in\Omega_j:=\tilde\Omega(\delta(j),R(n),Q_j)\cap \Omega_{\mathrm{fr}}$ and $\PP(\Omega_j)\geq 1-\exp(-j^{1/2}/4\tau^{2})$ by \eqref{theorem_erg3} and Corollary \ref{corollary_bernstein}.
 Furthermore for each $\omega\in\Omega_j$ we have 
 \[
 \lim_{n\rightarrow\infty}\lim_{j\rightarrow\infty}B_\omega(j,n)=0.
 \]
 Given $j,n\in \NN$ we set
\[
A_j^{(n)}:=\{\omega\in \Omega_{\mathrm{lf}}\cap \Omega_{\mathrm{fr}} \st D_\omega(j,n)>B_\omega(j,n)\}.
\]
Therefore $\PP(A_j^{(n)})\leq \exp(-  j^{1/2}/4\tau^2)$ for all $j\in\NN$ and hence $\sum_j \PP(A_j^{(n)}) <\infty$ holds. Applying Borel-Cantelli lemma leads to
\[
 \PP(A^{(n)})=0,\hspace{1cm} \mbox{where} \hspace{1cm} A^{(n)}:= \bigcap_{k=1}^\infty \bigcup_{j=k}^\infty A_j^{(n)}=\{ A_j^{(n)} \mbox{ infinitely often }\}.
\]
Thus we get
\[
 \PP\left(\omega \in \Omega_{\mathrm{lf}}\cap\Omega_{\mathrm{fr}} \ \Big\vert\ \lim_{j\rightarrow\infty}\left( D_\omega(j,n)-B_\omega(j,n)\right)\leq 0\right)=1
\]
for all $n\in \NN$.
And hence there exists a set $\tilde \Omega \subset \Omega_{\mathrm{lf}}\cap \Omega_{\mathrm{fr}}$ with $\PP(\tilde \Omega)=1$ such that
\[
 \lim_{n\rightarrow\infty}\lim_{j\rightarrow\infty}\left(D_\omega(j,n)-B_\omega(j,n)\right)\leq 0 \hspace{1cm}\mbox{for all }\omega \in \tilde \Omega
\]
which implies by definition of $B_\omega(j,n)$
\begin{equation}\label{theorem_erg1}
 \lim_{n\rightarrow\infty}\lim_{j\rightarrow\infty}  D_\omega(j,n) = 0 \hspace{1cm}\mbox{for all }\omega \in \tilde \Omega.
\end{equation}

Let $\kappa>0$ and $\omega\in\tilde\Omega$ arbitrary. There exists a natural number $n_0=n_0(\omega,\kappa)$ satisfying $\lim_{j\rightarrow\infty} D_\omega(j,n_0)\leq \kappa/8$, thus there exists $j_0=j_0(\omega,\kappa) \in \NN$ such that $D_\omega(j,n_0)\leq \kappa/4$ for all $j\geq j_0$. Using triangle inequality gives that 

\begin{eqnarray*}
\left \Vert  \frac{F_\omega^{R(n_0)}(Q_j)}{\vert Q_j \vert} - \frac{F_\omega^{R(n_0)}(Q_m)}{\vert Q_m \vert} \right\Vert 
&\leq& \left\Vert  \frac{F_\omega^{R(n_0)}(Q_j)}{\vert Q_j \vert} - \sum_{S\in {\mathcal S}(Q_{n_0})} \nu_S \frac{\tilde F^{R(n_0)}(S)}{\vert Q_{n_0}\vert}\right \Vert \\
&&+ \left\Vert\frac{F_\omega^{R(n_0)}(Q_m)}{\vert Q_m \vert} -\sum_{S\in {\mathcal S}(Q_{n_0})} \nu_S \frac{\tilde F^{R(n_0)}(S)}{\vert Q_{n_0}\vert}  \right\Vert \\
&\leq & D_\omega (j,{n_0}) + D_\omega (m,{n_0}) \leq \frac{\kappa}{2}
\end{eqnarray*}
holds for all $j,m\geq j_0$. Furthermore we use Lemma \ref{lemma_4} to obtain that there exists a $j_1=j_1(\kappa)\in \NN$ such that
\begin{equation}\label{theorem_erg2}
\left \Vert  \frac{F_\omega(Q_j)}{\vert Q_j \vert} - \frac{F_\omega^{R(n_0)}(Q_j)}{\vert Q_j \vert} \right\Vert 
=\left \Vert \frac{n(\Delta_\omega[Q_j])-n(\Delta_\omega[Q_{j,R(n_0)}])}{\vert Q_j\vert} \right \Vert
\leq \frac{4\vert \partial^{R(n_0)} Q_j \vert}{\vert Q_j\vert} 
\leq \frac{\kappa}{4}
\end{equation}
for all $j\geq j_1$. Now the triangle inequality yields
\begin{eqnarray*}
 \left \Vert  \frac{F_\omega(Q_j)}{\vert Q_j \vert} - \frac{F_\omega(Q_m)}{\vert Q_m \vert} \right\Vert &\leq & 
\left \Vert  \frac{F_\omega(Q_j)}{\vert Q_j \vert} - \frac{F_\omega^{R(n_0)}(Q_j)}{\vert Q_j \vert} \right\Vert 
+\left \Vert  \frac{F_\omega^{R(n_0)}(Q_j)}{\vert Q_j \vert} - \frac{F_\omega^{R(n_0)}(Q_m)}{\vert Q_m \vert} \right\Vert \\
&&+\left \Vert  \frac{F_\omega^{R(n_0)}(Q_m)}{\vert Q_m \vert} - \frac{F_\omega(Q_m)}{\vert Q_m \vert} \right\Vert \\
&\leq& \frac{\kappa}{4}  +\frac{\kappa}{2}+   \frac{\kappa}{4} =\kappa
\end{eqnarray*}
for all $j,m\geq \max \{j_0,j_1\}$, which implies that $\vert Q_j\vert^{-1} F_\omega(Q_j)$ is a Cauchy sequence and hence convergent in the Banach space $\cB(\RR)$ for all $\omega \in \tilde\Omega$. We denote the limit function by $N$.

It remains to show that $\sum_{S\in \cS(Q_n)}\nu_S \frac{\tilde F(S)}{\vert Q_n\vert}$ converges to the same limit. Therefore we fix $\omega\in\tilde\Omega$ and consider
\begin{equation*}
 \lim_{n\rightarrow \infty}\left\Vert N -  \sum_{S\in \cS(Q_n)}\nu_S \frac{\tilde F(S)}{\vert Q_n\vert} \right \Vert
= \lim_{n\rightarrow \infty}\lim_{j\rightarrow\infty} \left\Vert \frac{F_\omega(Q_j)}{\vert Q_j\vert } -  \sum_{S\in \cS(Q_n)}\nu_S \frac{\tilde F(S)}{\vert Q_n\vert} \right \Vert.
\end{equation*}
Adding zeros leads to the inequality
\begin{multline}
 \left\Vert \frac{F_\omega(Q_j)}{\vert Q_j\vert } -  \sum_{S\in \cS(Q_n)}\nu_S \frac{\tilde F(S)}{\vert Q_n\vert} \right \Vert
\leq  \left\Vert \frac{F_\omega(Q_j)}{\vert Q_j\vert } - \frac{F_\omega^{R(n)}(Q_j)}{\vert Q_j\vert }  \right \Vert
\\ + \left\Vert \frac{F_\omega^{R(n)}(Q_j)}{\vert Q_j\vert }  -  \sum_{S\in \cS(Q_n)}\nu_S \frac{\tilde F^{R(n)}(S)}{\vert Q_n\vert} \right \Vert  
+ \left\Vert \sum_{S\in \cS(Q_n)}\nu_S \frac{\tilde F^{R(n)}(S)}{\vert Q_n\vert} -  \sum_{S\in \cS(Q_n)}\nu_S \frac{\tilde F(S)}{\vert Q_n\vert} \right \Vert.
\end{multline}
Now we take $\lim_{n\rightarrow\infty}\lim_{j\rightarrow\infty}$ on both sides and obtain that the three summands on the right vanish. The first one 
is zero by an estimate as in (\ref{theorem_erg2}). Applying (\ref{theorem_erg1}) gives that the second summand vanishes. The third summand tends to zero since Lemma \ref{lemma_4} yields
\begin{eqnarray*}
\left\Vert \sum_{S\in \cS(Q_n)}\nu_S \frac{\tilde F^{R(n)}(S)}{\vert Q_n\vert} -  \sum_{S\in \cS(Q_n)}\nu_S \frac{\tilde F(S)}{\vert Q_n\vert} \right \Vert 
&\leq&
\sum_{S\in \cS(Q_n)}\nu_S \frac{\left\Vert \tilde F^{R(n)}(S)-\tilde F(S)  \right\Vert}{\vert Q_n\vert} \\
&\leq&
\sum_{S\in \cS(Q_n)}\nu_S \frac{4\left\vert \partial^{R(n)}Q_n  \right\vert}{\vert Q_n\vert}
\end{eqnarray*}
and for some fixed $y\in Q_n$
\begin{align*}
 \sum_{S\in \cS(Q_n)}\nu_S 
&= \lim_{j\to\infty}\frac{1}{|Q_j|} \sum_{S\in \cS(Q_n)} \vert \{x\in G \st V_S x\subset Q_j ,\ \Gamma_\omega[V_Sx]= Sx \}\vert\\
&\leq \lim_{j\to\infty}\frac{1}{|Q_j|} \sum_{S\in \cS(Q_n)} \vert \{z \in Q_j \st x:=y^{-1}z,\  \Gamma_\omega[V_Sx]= Sx \}\vert\\
&= \lim_{j\to\infty}\frac{1}{|Q_j|} \left\vert\dot\bigcup_{S\in \cS(Q_n)}  \{z \in Q_j \st x:=y^{-1}z,\   \Gamma_\omega[V_Sx]= Sx \}\right\vert
\leq 1 .
\end{align*}
This proves the claimed convergence for all $\omega \in \tilde \Omega$.
\end{proof}

\section{Discontinuities}
In this section we investigate the points of discontinuity of the integrated density of states. We firstly prove a criteria that the IDS has a jump at $\lambda\in \RR$. Afterwords we characterise the set of points of discontinuity as a large subset of the real axis.

\begin{Theorem}\label{thm_dis}
 There exists a set $\tilde\Omega\subset \Omega$ of full measure such that for each $\omega\in \tilde\Omega$ and $\lambda \in \RR$ the following assertions are equivalent:
\begin{itemize}
 \item [(a)] $\lambda$ is a point of discontinuity of $N$
 \item [(b)] there exists a finitely supported eigenfunction corresponding to $\lambda$
 \item [(c)] there exist infinitely many mutually independent finitely supported eigenvectors corresponding to $\lambda$
\end{itemize}
\end{Theorem}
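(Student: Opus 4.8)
The plan is to establish the cycle of implications (c) $\Rightarrow$ (b) $\Rightarrow$ (a) $\Rightarrow$ (c) and then to collect the various null sets into one set $\tilde\Omega$. The first step is to reformulate (a). Since by Theorem~\ref{theorem_erg} the function $N$ is the uniform limit of the monotone functions $F_\omega(Q_j)/|Q_j|=n(\Delta_\omega[Q_j])/|Q_j|$, and since $|\mathrm{jump}_f(\lambda)-\mathrm{jump}_g(\lambda)|\le 2\Vert f-g\Vert_\infty$ for monotone $f,g$ (with $\mathrm{jump}_h(\lambda):=h(\lambda)-h(\lambda^-)$, which for an eigenvalue counting function $n(A)$ equals $\dim\ker(A-\lambda)$), it follows that on the full-measure set $\Omega_0$ of Theorem~\ref{theorem_erg}
\[
  \mathrm{jump}_N(\lambda)=\lim_{j\to\infty}\frac{\dim\ker\bigl(\Delta_\omega[Q_j]-\lambda\bigr)}{|Q_j|}\qquad\text{for all }\lambda\in\RR .
\]
In particular this limit is independent of $\omega$, $N$ is discontinuous at $\lambda$ precisely when it is positive, and it therefore suffices to relate ``$\mathrm{jump}_N(\lambda)>0$'' to (b) and to (c). I also record that a finitely supported eigenfunction $u$ of $\Delta_\omega$ restricts to an eigenvector of the integer matrix $\Delta_\omega[\supp u]$, so its eigenvalue is an algebraic integer and the set $\Lambda$ of all eigenvalues arising this way, over all $\omega\in\Omega_{\mathrm{lf}}$ and all finite supports, is countable; hence (b), and a fortiori (c), can hold only for $\lambda\in\Lambda$.

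For (a) $\Rightarrow$ (c), which is the heart of the matter, I would use the Bernstein estimate to remove the vertices carrying long edges. Suppose $c:=\mathrm{jump}_N(\lambda)>0$, fix $R\ge R_0$ and $0<\delta\le\tau^{-1}$ (to be taken large, resp.\ small, depending on $c$), and restrict $\omega$ to the full-measure subset of $\Omega_0$ on which $\omega\in\Omega_2(\delta,R,Q_j)$ for all large $j$ — this holds off a null set by Corollary~\ref{corollary_bernstein} and Borel--Cantelli, using $|Q_j|\ge j$. For large $j$ set $B_j:=\partial^R Q_j\cup\{x\in Q_j:\ x\text{ is incident to an edge of length }>R\}$; then $|B_j|\le|\partial^R Q_j|+2|Q_j|(\epsilon(R)+\delta)$, which for $R$ large, $\delta$ small and $j$ large is at most any prescribed fraction of $|Q_j|$, in particular $4|B_j|<\dim\ker(\Delta_\omega[Q_j]-\lambda)$. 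Inside the $\lambda$-eigenspace of $\Delta_\omega[Q_j]$ consider
\[
  V_j:=\Bigl\{u\in\ker\bigl(\Delta_\omega[Q_j]-\lambda\bigr):\ u|_{B_j}=0,\ \ \textstyle\sum_{y\in Q_j:[x,y]\in E_\omega}u(y)=0\ \ \forall x\in B_j\Bigr\},
\]
cut out by at most $2|B_j|$ linear conditions, so $\dim V_j\ge\dim\ker(\Delta_\omega[Q_j]-\lambda)-2|B_j|\to\infty$. Extending any $u\in V_j$ by zero gives an element of $C_c(G)$ with $\tilde\Delta_\omega u=\lambda u$: at a vertex of $Q_j\setminus B_j$ all neighbours lie within distance $R$ and inside $Q_j$, so there $\tilde\Delta_\omega u$ agrees with $\Delta_\omega[Q_j]u=\lambda u$; at a vertex of $B_j$ one has $u=0$ and the second constraint gives $\tilde\Delta_\omega u=0$; and no vertex outside $Q_j$ is adjacent to $\supp u\subset Q_j\setminus B_j$. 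Hence $\Delta_\omega u=\lambda u$, so $V_j$ embeds into the space of finitely supported $\lambda$-eigenfunctions of $\Delta_\omega$, and since $\dim V_j\to\infty$ that space is infinite dimensional; this is (c).

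The implication (c) $\Rightarrow$ (b) is immediate. For (b) $\Rightarrow$ (a) I would use the ergodic theorem. The event $E_\lambda$ that $\Delta_\omega$ has a finitely supported $\lambda$-eigenfunction is $T$-invariant (translating an eigenfunction of $\Delta_\omega$ produces one of $\Delta_{T_g\omega}$ with translated support), hence has probability $0$ or $1$; and if $\PP(E_\lambda)>0$ then, by countability of the finite subsets of $G$ and $T$-invariance of $\PP$, there is a finite $K\ni\id$ with $q:=\PP\{\Delta_\omega\text{ has a }\lambda\text{-eigenfunction supported on }K\}>0$. Applying Theorem~\ref{theorem_linde} to the indicator of this (measurable) event, almost surely at least $\tfrac q2|Q_j|$ translates $Kg\subset Q_j$ support a $\lambda$-eigenfunction of $\Delta_\omega$ for large $j$; each such function is also a $\lambda$-eigenfunction of $\Delta_\omega[Q_j]$ (its support lying in $Q_j$), and a greedy selection of translates with pairwise disjoint supports furnishes at least a fixed positive multiple of $|Q_j|$ linearly independent ones, whence $\mathrm{jump}_N(\lambda)=\lim_j\dim\ker(\Delta_\omega[Q_j]-\lambda)/|Q_j|>0$. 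Equivalently, $\mathrm{jump}_N(\lambda)=0$ forces $\PP(E_\lambda)=0$.

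Finally, let $\tilde\Omega$ be the intersection of $\Omega_0$, of the countably many full-measure sets (one for each of the countably many $\lambda\in\Lambda$ with $\mathrm{jump}_N(\lambda)>0$) on which the argument for (a) $\Rightarrow$ (c) works, and of the complements of the null sets $E_\lambda$ with $\lambda\in\Lambda$ and $\PP(E_\lambda)=0$; this $\tilde\Omega$ still has full measure. For $\omega\in\tilde\Omega$ and any $\lambda\in\RR$: if $\mathrm{jump}_N(\lambda)>0$ then (a) holds, whence (c) and (b); if $\mathrm{jump}_N(\lambda)=0$ then $\lambda\notin\Lambda$ or $\PP(E_\lambda)=0$, so (b) fails on $\tilde\Omega$, hence (c) fails, and (a) fails by definition — the three statements are thus equivalent. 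I expect the genuine difficulty to sit entirely in (a) $\Rightarrow$ (c): for an operator that is not of finite range an eigenfunction of $\Delta_\omega[Q_j]$ need not be one of $\Delta_\omega$, and it is exactly the Bernstein inequality of Section~3 that renders the set $B_j$ of ``long-edge'' vertices negligible with overwhelming probability — small enough that after deleting it, and imposing the few extra linear conditions needed to solve the eigenvalue equation at those vertices, a macroscopic eigenspace of genuine finitely supported eigenfunctions survives.
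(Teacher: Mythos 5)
Your proposal is correct in substance, but it takes a genuinely different route from the paper, most visibly in the key direction (a)~$\Rightarrow$~(c). The paper produces a single eigenvector $u$ of $\Delta_\omega[Q_k]$ vanishing on $\partial_{\mathrm{int}}^R Q_k$ by the dimension count, sets $S=\Gamma_\omega[Q_k]$, and then invokes Lemma~\ref{lemma_nu}: the frequency $\nu_{S,R}$ of $R$-isolated copies of $S$ is strictly positive, so $u$ can be transplanted to infinitely many disjoint translated copies. You instead stay inside one large box $Q_j$: Corollary~\ref{corollary_bernstein} plus Borel--Cantelli makes the set $B_j$ of boundary and long-edge vertices an arbitrarily small fraction of $|Q_j|$, you impose $O(|B_j|)$ linear conditions on $\ker(\Delta_\omega[Q_j]-\lambda)$, and you verify vertex by vertex that the surviving subspace $V_j$, of dimension growing like $|Q_j|$, consists after extension by zero of genuine finitely supported eigenfunctions of $\tilde\Delta_\omega$. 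This avoids the frequency machinery entirely for this direction, and the explicit verification also settles a point the transplantation step of the paper leaves implicit: the diagonal of $\Delta_\omega[Q_k]$ records the full degrees $m_\omega$, whereas inside an $R$-isolated copy the degrees are those of $S$ (the discrepancy is exactly $D_\omega^R(Q_k)$); by additionally zeroing $u$ at all long-edge vertices you never meet this mismatch. For (b)~$\Rightarrow$~(a) the two arguments are parallel: the paper uses $\nu_{S,R}>0$ for $S=\Gamma_\omega[B_{s+R}]$ together with $\vert B_{3t}\vert\,\dot\sharp_{S,R}\geq\sharp_{S,R}$, while you apply Theorem~\ref{theorem_linde} to the indicator of the event that a $\lambda$-eigenfunction supported in a fixed finite $K$ exists and then disjointify greedily; both give $\dim\ker(\Delta_\omega[Q_j]-\lambda)\geq c'|Q_j|$ and hence a jump, and your use of eigenfunctions of the full operator makes the passage to $\Delta_\omega[Q_j]$ immediate. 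Your reduction of the $\lambda$-uniformity to the countable set $\Lambda$ of algebraic eigenvalues is the same idea the paper exploits in Corollary~\ref{cor_disc}.

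One bookkeeping slip should be repaired: in assembling $\tilde\Omega$ you intersect the exceptional sets for (a)~$\Rightarrow$~(c) only over $\lambda\in\Lambda$ with $\mathrm{jump}_N(\lambda)>0$, but a priori a discontinuity point of $N$ need not lie in $\Lambda$ --- that inclusion is only known once (a)~$\Rightarrow$~(b) has been established, so as written the equivalence could fail at such a point. The fix is immediate and costs one line: the null sets appearing in your (a)~$\Rightarrow$~(c) argument do not depend on $\lambda$ at all, only on the countably many parameter choices $R\in\{R_0,R_0+1,\dots\}$ and $\delta\in\{\tau^{-1}/m: m\in\NN\}$ entering Borel--Cantelli, so intersect over those; alternatively, intersect over the (at most countably many) discontinuity points of the deterministic monotone function $N$.
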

\begin{proof}
Let $(Q_j)$ be a strictly increasing, tempered F\o lner sequence and $\tilde\Omega\subset \Omega$ a set of full measure such that Theorem \ref{theorem_erg} holds for all $\omega \in\tilde\Omega$. Note that $\tilde \Omega\subset \Omega_{\mathrm{fr}}\cap\Omega_{\mathrm{lf}}$, which implies in particular that for an arbitrary graph $S\in\cS$ and $\omega\in \tilde \Omega$ the frequency $\nu_S$ in $\Gamma_\omega$ along $(Q_j)$ exists. As $p$ is assumed to be an element of $\ell^1(G)$ there exists $R\in\NN$ such that $p(xy^{-1})$ is strictly smaller than $1$ for all $x,y\in G$ satisfying $d(x,y)\geq R$. We fix this $R\in \NN$ and some $\omega \in \tilde\Omega$.

Let $\lambda$ be a point of  discontinuity of $N$. Theorem \ref{theorem_erg} yields that $n(\Delta_\omega[Q_j])/\vert Q_j\vert$ approximates the IDS $N$ uniformly in the energy variable. Hence there exists a constant $c>0$ such that
\[
\dim(\ker(\Delta_\omega[Q_j]-\lambda))
=\lim_{\epsilon \to \infty}\left(n(\Delta_\omega[Q_j])(\lambda+\epsilon)-n(\Delta_\omega[Q_j])(\lambda-\epsilon)\right)
\geq c\vert Q_j\vert.
\]
for all $j\in\NN$. Since $(Q_j)$ is a F\o lner sequence, we have $\lim_{j\to \infty}\vert\partial_{\mathrm{int}}^{R}Q_j\vert /\vert Q_j\vert=0$, which implies the existence of $k\in \NN$ such that
\[
 \dim(\ker(\Delta_\omega[Q_k]-\lambda))
\geq c\vert Q_k\vert
> \vert \partial_{\mathrm{int}}^R Q_k \vert
=\dim(\ell^2(\partial_{\mathrm{int}}^R Q_k ))
\]
holds. A well known dimension argument yields that there exists an element $0\neq u\in\ell^2(Q_k)$ satisfying $(\Delta_\omega[Q_k]-\lambda) u=0$ and $u\equiv 0$ on $\partial_{\mathrm{int}}^R Q_k $. Now we consider the subgraph 
\begin{equation}\label{thm_dis2}
 S:=(V_S,E_S):=\Gamma_\omega[Q_{k}].
\end{equation}
 Lemma \ref{lemma_nu} proves that the frequency of $R$-isolated occurrences of $S$ in $\Gamma_\omega$ along $(Q_j)$ is given by
\begin{equation}\label{thm_dis1}
\nu_{S,R} =  \prod_{[x,y]\in E_S} p(xy^{-1}) \cdot \prod_{\ato{[x,y]\notin E_S}{x,y\in V_S}} (1-p(xy^{-1})) \cdot \prod_{\ato{[x,y]\in E, x\in V_S,}{y\notin  V_S, d(x,y)\geq R }} (1-p(xy^{-1}))
 \end{equation}
Here the first two products have to be non-zero as $S$ is a restriction of $\Gamma_\omega$. The positivity of the infinite product follows from the choice of $R$ and the summability condition on $p$. 
This implies that there is an infinite set $M\subset G$ such that $\Gamma_\omega[Q_k x]$ is an $R$-isolated copy of $S$ for each $x\in M$. Furthermore there exists an infinite subset $M'\subset M$ such that $Q_kx\cap Q_ky=\empty$ for all $x,y\in M'$. For $x\in M'$ we define $u_x\in \ell^2(G)$ by setting
\[
 u_x(g)=\begin{cases}u(gx^{-1})& g\in Q_kx\\ 0&\mbox{else.} \end{cases}
\]
Then $u_x$, $x\in M'$ are mutually independent, finitely supported eigenfunctions of $\Delta_\omega$ corresponding to $\lambda$. This proves that (a) implies (c).

Obviously (c) implies (b), thus it remains to show that given a finitely supported eigenfunction $u$ corresponding to $\lambda \in \RR$ the IDS is discontinuous at $\lambda$. To this end let $r>0$ be large enough that $\supp(u)\subset B_r$. As $\omega\in \Omega_{\mathrm{lf}}$ the graph $\Gamma_\omega$ is locally finite. Therefore we find $s>r$ such that there are no edges connecting the sets $B_r$ and $G\setminus B_s$ in $\Gamma_\omega$. Now we consider the graph $S=(V_S,E_S):=\Gamma_\omega[B_{t}]$, where $t:=s+R$. As $S$ is a restriction of $\Gamma_\omega$ the frequency $\nu_{S,R}$ of $R$-isolated occurrences of $S$ in $\Gamma_\omega$ along $(Q_j)$ is strictly positive. Thus there exists a constant $c>0$ such that $\sharp_{S,R}(\Gamma_\omega,Q_j)\geq c\vert Q_j\vert$ for $j$ large enough.

For given $Q\in \cF(G)$ each disjoint $R$-isolated copy of $S$ in $\Gamma_\omega[Q]$ adds a dimension to the eigenspace of $p_Q \Delta_\omega i_Q$ corresponding to $\lambda$. Therefore we define $\dot\sharp_{S,R}(\Gamma_\omega,Q)$ to be the maximal number of disjoint and $R$-isolated occurrences of the subgraph $S$ in $\Gamma_\omega[Q]$. It is easy to verify that in this situation the inequality $\vert B_{3t}\vert\dot\sharp_{S,R}(\Gamma_\omega,Q) \geq\sharp_{S,R}(\Gamma_\omega,Q)$ holds.
For each $\epsilon>0$ we get
\[
 \frac{n(\Delta_\omega[Q])(\lambda-\epsilon)}{\vert Q\vert} 
\leq \frac{n(\Delta_\omega[Q])(\lambda + \epsilon)-\dot\sharp_{S,R}(\Gamma_\omega,Q)}{\vert Q\vert}
\leq \frac{n(\Delta_\omega[Q])(\lambda + \epsilon)}{\vert Q\vert} -\frac{\sharp_{S,R}(\Gamma_\omega,Q)}{\vert B_{3t}\vert \vert Q\vert}.
\]
Replacing $Q$ by elements of the sequence $(Q_j)$ yields
\[
 \frac{n(\Delta_\omega[Q_j])(\lambda + \epsilon)}{\vert Q_j\vert}-\frac{n(\Delta_\omega[Q_j])(\lambda - \epsilon)}{\vert Q_j\vert}
\geq 
\frac{\sharp_{S,R}(\Gamma_\omega,Q_j)}{\vert B_{3t}\vert \vert Q_j\vert}.
\]
We let $j$ tend to infinity and obtain
\[
 N(\lambda+\epsilon) - N(\lambda-\epsilon)\geq \frac{\nu_{S,R}}{\vert B_{3t}\vert},
\]
which proves that $\lambda$ is a point of discontinuity of $N$.
\end{proof}

Now we study the set of points of discontinuity, which obviously depends on the specific choice of the function $p\in \ell^1(G)$. Here we consider the case where the given function $p$ satisfies not just (\ref{def_p}) but even
\begin{equation}\label{def_pnew}
 0<p(x)<1    \hspace{0.7cm}\mbox{and}\hspace{0.7cm}   p(x)=p(x^{-1})
\end{equation}
for all $x\in G$. Defining the set
\[
 W=\{\lambda\in \RR \st \exists\ S\in \cS \mbox{ with } \lambda \in \sigma(\Delta_S )\},
\]
we prove the following

\begin{Corollary}\label{cor_disc}
Let $p \in \ell^1(G)$ satisfying (\ref{def_pnew}) and the associated probability space $(\Omega,\cA,\PP)$ be given. Then the set of points of discontinuity of the IDS $N$ equals $W$ almost surley.
\end{Corollary}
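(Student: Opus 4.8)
The plan is to combine Theorem~\ref{thm_dis} with an explicit description of those $\lambda$ that admit a finitely supported eigenfunction of $\Delta_\omega$. Fix a strictly increasing, tempered F\o lner sequence of monotiles $(Q_j)$ and choose a full-measure set $\tilde\Omega\subseteq\Omega_{\mathrm{lf}}$ on which the conclusion of Theorem~\ref{thm_dis} holds and on which, by Lemma~\ref{lemma_nu} applied with $R=1$, all frequencies $\nu_{S,1}(\Gamma_\omega)$ along $(Q_j)$ exist. Since $N$ and $W$ do not depend on $\omega$, the equivalence (a)$\Leftrightarrow$(b) of Theorem~\ref{thm_dis} reduces the claim to the following statement: for every $\omega\in\tilde\Omega$ and every $\lambda\in\RR$, the operator $\Delta_\omega$ has a finitely supported eigenfunction with eigenvalue $\lambda$ \emph{if and only if} $\lambda\in W$.

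\emph{Eigenfunctions produce elements of $W$} (this implication does not use~\eqref{def_pnew}). Given a finitely supported $u\neq 0$ with $\Delta_\omega u=\lambda u$, set $T:=\supp(u)$ and $K:=T\cup\{y\in G\st [x,y]\in E_\omega\text{ for some }x\in T\}$; this is finite because $\Gamma_\omega$ is locally finite. Put $S:=\Gamma_\omega[K]\in\cS$. Then $u$, regarded as an element of $\ell^2(K)$, is an eigenvector of $\Delta_S$ for $\lambda$: for $x\in T$ every $\omega$-neighbour of $x$ lies in $K$, hence $\Delta_S u(x)=\Delta_\omega u(x)=\lambda u(x)$; for $x\in K\setminus T$ we have $u(x)=0$, and the eigenvalue equation gives $\sum_{y:[x,y]\in E_\omega}u(y)=-\Delta_\omega u(x)=0$, which is exactly $\Delta_S u(x)=0=\lambda u(x)$. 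Thus $\lambda\in\sigma(\Delta_S)\subseteq W$.

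\emph{Elements of $W$ are discontinuities.} Let $\lambda\in W$, say $\lambda\in\sigma(\Delta_{S_0})$ with eigenvector $v\in\ell^2(V_{S_0})$ for some finite $S_0=(V_{S_0},E_{S_0})\in\cS$. By Lemma~\ref{lemma_nu} with $R=1$, the frequency of $1$-isolated copies of $S_0$ along $(Q_j)$ is
\[
\nu_{S_0,1}=\prod_{[x,y]\in E_{S_0}}p(xy^{-1})\cdot\prod_{\ato{[x,y]\notin E_{S_0}}{x,y\in V_{S_0}}}(1-p(xy^{-1}))\cdot\prod_{\ato{x\in V_{S_0}}{y\in G\setminus V_{S_0}}}(1-p(xy^{-1})).
\]
Under~\eqref{def_pnew} each factor lies in $(0,1)$; the first two products are finite and hence positive, and the third converges to a positive number because $\sum_{x\in V_{S_0}}\sum_{y\in G\setminus V_{S_0}}p(xy^{-1})\leq\vert V_{S_0}\vert\,\Vert p\Vert_1<\infty$. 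Therefore $\nu_{S_0,1}>0$, and since $(Q_j)$ is strictly increasing this forces $\sharp_{S_0,1}(\Gamma_\omega,Q_j)\to\infty$; in particular there is an $x\in G$ with $\Gamma_\omega[V_{S_0}x]=S_0x$ and such that $S_0x$ carries no $\Gamma_\omega$-edge to its complement. Then $v_x\in C_c(G)$, defined by $v_x(g):=v(gx^{-1})$ for $g\in V_{S_0}x$ and $v_x(g):=0$ otherwise, satisfies $\Delta_\omega v_x=\lambda v_x$: every $g\in V_{S_0}x$ has all its $\omega$-neighbours inside $V_{S_0}x$, so $\Delta_\omega v_x(g)=\Delta_{S_0x}v_x(g)=\lambda v_x(g)$ by translation covariance of the graph Laplacian, while every $g\notin V_{S_0}x$ has no $\omega$-neighbour in $V_{S_0}x$, so $\Delta_\omega v_x(g)=0$. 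By Theorem~\ref{thm_dis}, $\lambda$ is a point of discontinuity of $N$.

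The point requiring care is the first implication: one must be sure that enlarging $\supp(u)$ by its $\omega$-neighbourhood yields an eigenvector of the \emph{intrinsic} Laplacian $\Delta_S$ of the induced subgraph, rather than of the restriction $\Delta_\omega[K]$, which differs from $\Delta_S$ by a diagonal term counting edges leaving $K$. This is harmless precisely because the eigenvalue equation forces $u$ and the relevant neighbour sums to vanish on the adjoined vertices. The other ingredients --- local finiteness (Lemma~\ref{lemma_locfin}), the product formula for $\nu_{S,R}$ (Lemma~\ref{lemma_nu}), and convergence of the infinite product from $p\in\ell^1(G)$ --- are already in place, so the remaining verifications are routine.
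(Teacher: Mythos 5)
Your proposal is correct and follows essentially the same route as the paper: reduce via the equivalence (a)$\Leftrightarrow$(b) of Theorem~\ref{thm_dis}, pass from a finitely supported eigenfunction to an eigenvalue of the intrinsic Laplacian of a finite induced subgraph enclosing the support together with its $\omega$-neighbours (the paper uses nested balls $B_r\subset B_s$ instead of your neighbourhood closure $K$, which is the same idea), and conversely use the positivity of $\nu_{S,1}$ under~(\ref{def_pnew}) from Lemma~\ref{lemma_nu} to plant a $1$-isolated copy of $S$ and translate the eigenvector. Your explicit verification that the restricted function solves the eigenvalue equation on the adjoined boundary vertices is a detail the paper leaves implicit, but the argument is the same.
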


\begin{proof}
 Let $\tilde\Omega \subset \Omega_{\mathrm{fr}}\cap\Omega_{\mathrm{lf}}$ be a set of full measure such that Theorem \ref{theorem_erg} holds for all $\omega \in\tilde\Omega$  and choose some $\omega\in\tilde\Omega$.

 Let $\lambda$ be a point of discontinuity of $N$. By Theorem \ref{thm_dis} there is a finitely supported eigenfunction $u$ corresponding to $\lambda$. As in the proof of Theorem \ref{thm_dis} we find $r>0$ such that $\supp(u)\subset B_r$ and $s>r$ such that there are no edges in $\Gamma_\omega$ connecting $B_r$ with $G\setminus B_s$. We set $S=(V_S,E_S)=\Gamma_\omega[B_s]$. Therefore $\lambda$ is an eigenvalue of $\Delta_S$ with eigenfunction $ p_{V_S} u$.
 
 Let $\lambda$ be an element in $W$, i.e. there exists $S=(V_S,E_S)\in \cS$ such that $\lambda$ is an eigenvalue of the associated Laplacian $\Delta_S$. Let $u$ be an associated eigenfunction. By Lemma \ref{lemma_nu} the frequency $\nu_{S,1}$ is given by 
 \[
  \nu_{S,1}=  \prod_{[x,y]\in E_S} p(xy^{-1}) \cdot \prod_{\ato{[x,y]\notin E_S}{x,y\in V_S}} (1-p(xy^{-1})) \cdot \prod_{\ato{[x,y]\in E, x\in V_S,}{y\notin  V_S, d(x,y)\geq 1}} (1-p(xy^{-1}))
 \]
which is strictly positive by assumption on $p$. Thus there exists a $x\in G$ such that $Sx$ is a $1$-isolated copy of $S$ in $\Gamma_\omega$. Then $u'\in \ell^2(G)$ given by
\[
 u'(g)=\begin{cases} u(gx^{-1}) & g\in V_S x \\ 0 &\mbox{else} \end{cases}
\]
is a finitely supported eigenfunction of $\Delta_\omega$ corresponding to $\lambda$. By Theorem \ref{thm_dis} this implies the discontinuity of $N$ at $\lambda$.
\end{proof}

\subsection*{Acknowledgement} The author would like to thank his advisor Ivan Veseli\'c for suggesting him the research topic of this paper. He would also like to thank Daniel Lenz for most valuable hints concerning the usability of Chebyshev inequality. These remarks made proofs more elegant and put the results in a larger generality.

\bibliographystyle{alpha}
\newcommand{\etalchar}[1]{$^{#1}$}
\def\cprime{$'$}\def\polhk#1{\setbox0=\hbox{#1}{\ooalign{\hidewidth
  \lower1.5ex\hbox{`}\hidewidth\crcr\unhbox0}}}

\end{document}